\numberwithin{equation}{section}
\theoremstyle{plain}
\newtheorem{theorem}{Theorem}[section]
\newtheorem{lemma}[theorem]{Lemma}
\newtheorem{proposition}[theorem]{Proposition}
\theoremstyle{definition}
\newtheorem{definition}[theorem]{Definition}
\theoremstyle{remark}
\newtheorem{assumption}[theorem]{\textbf{Assumption}}
\newtheorem{remark}[theorem]{Remark}
\newcommand{\lmt}[2]{\mathop{\lim}_{{#1} \rightarrow {#2}} }
\newcommand{\lip}[1]{{\mathrm{lip}}({#1})}
\newcommand{\lmts}[2]{\mathop{\overline{\lim}}_{{#1} \rightarrow {#2}} }
\newcommand{\lmti}[2]{\mathop{\underline{\lim}}_{{#1} \rightarrow {#2}} }
\newcommand{\mm}{\mathfrak m}
\newcommand{\ms}{(X,\d,\mm)}
\newcommand{\rcdkn}{{\rm RCD}(K, N)}
\newcommand{\rcd}{{\rm RCD}(K, \infty)}
\newcommand{\vol}{{\rm Vol}_{\rm g}}
\newcommand{\g}{{\rm g}}
\newcommand{\E}{\mathcal{E}}
\newcommand{\N}{\mathbb{N}}
\newcommand{\NN}{\mathcal{N}}
\newcommand{\R}{\mathbb{R}}
\newcommand{\C}{\mathfrak{C}}
\newcommand{\supp}{\mathop{\rm supp}\nolimits}   
\newcommand{\Lip}{\mathop{\rm Lip}\nolimits}
\renewcommand{\d}{{\mathrm d}}
\newcommand{\D}{{\mathrm D}}
\newcommand{\restr}[1]{\lower3pt\hbox{$|_{#1}$}}
\newcommand{\nchi}{{\raise.3ex\hbox{$\chi$}}}
\begin{document}

\title[Bourgain-Brezis-Mironescu's theorem revisited]{On the asymptotic behaviour of the  fractional Sobolev seminorms  in metric measure spaces:  Bourgain-Brezis-Mironescu's theorem revisited}


\author[B. ~Han]{Bang-Xian Han}
\address{Wu Wen-Tsun Key Laboratory of Mathematics and School of   Mathematical Sciences, University of Science and Technology of China (USTC), 230026, Hefei, China}
\email{hanbangxian@ustc.edu.cn}

\author[A.~Pinamonti]{Andrea Pinamonti}
\address{Dipartimento di Matematica\\ Università di Trento\\
Via Sommarive, 14, 38123 Povo TN}
\email{andrea.pinamonti@unitn.it}

\thanks{Part of this project was finished when the first author was a post-doctoral fellow at Technion-Israel Institute of Technology, supported by  the European Research Council (ERC) under the European Union's Horizon 2020 research and innovation programme (grant agreement No 637851)}

\date{\today}
\bibliographystyle{abbrv}
\maketitle

\begin{abstract}
We generalize Bourgain-Brezis-Mironescu's asymptotic  formula for fractional Sobolev functions,  in the setting of metric measure spaces,  under the assumption that at almost every point the tangent space in the  measured Gromov-Hausdorff sense is a finite dimensional Banach space or a Carnot group. Our result not only covers the known results concerning Euclidean spaces, weighted Riemannian manifolds and finite dimensional  Banach spaces,   but also  extends  Bourgain-Brezis-Mironescu's  formula to  $\rcdkn$ spaces and sub-Riemannian manifolds. 
\end{abstract}

\textbf{Keywords}: metric measure space, fractional Sobolev space, non-local functional,  measured Gromov-Hausdorff convergence, tangent cone.\\
\tableofcontents

\section{Introduction}\label{first section}
In the early 2000’s the study of fractional $s$-seminorms gained new interest, when  Bourgain-Brezis-Mironescu \cite{BBM} on one hand, and Maz’ya-Shaposhnikova \cite{MS}  on the other hand showed, that they can be seen as intermediary functionals between the $L^p(\R^N)$-norm and the $W^{1, p}(\R^N)$-seminorms. More precisely, for some given kernel functions (or called mollifiers) $(\rho_n(x, y))_n$ ($\rho_n=\frac{1/n}{\|x-y\|^{N-p/n}}$ for example)  and $(\hat \rho_n(x, y))_n$  ($\hat \rho_n=\frac{1/n}{|x-y|^{N+p/n-p}}$ for example),  there are constants $K={K_{p, N} }$ and $L={L_{p, N} }$ depending only on $p, N$ such that
 \begin{equation}\label{eq1:intro}
 \mathop{\lim}_{n \to \infty} \int_{\R^N} \int_{\R^N} \frac{|f(x)-f(y)|^p}{|x-y|^p} \rho_n(x, y)\,\d\mathcal L^N (x) \d \mathcal L^N (y)= {K }  \| \nabla f\|_{L^p(\R^N)}^p 
\end{equation}
and
 \begin{equation}
 \mathop{\lim}_{n \to \infty}\int_{\R^N} \int_{\R^N}  \frac{|f(x)-f(y)|^p}{|x-y|^p} \hat \rho_n(x, y)\,\d\mathcal L^N (x) \d\mathcal L^N (y)= {L }  \|  f\|_{L^p(\R^N)}^p 
\end{equation}

Recently, there are some new research on Bourgain-Brezis-Mironescu's formula  (BBM formula for short)  \eqref{eq1:intro} in metric measure setting, such as a work of G\'orny \cite{Gorny20} concerning a  Bourgain-Brezis-Mironescu type  formula in metric spaces with Euclidean tangents, and a new characterization of fractional Sobolev functions  obtained by Di Marino-Squassina \cite{DMS19}.  Motivated by these new progress, we realize that the Taylor's formula used in the Euclidean case is not essential. In summary, the key component in the  proof of Bourgain-Brezis-Mironescu's asymptotic formula in metric measure setting are
\begin{itemize}
\item [1)] a non-smooth version of the Rademacher's theorem, i.e. Lipschitz functions are weakly differentiable almost everywhere;
\item [2)] the space (locally)  looks  like a fixed Euclidean space, or more generally, at almost every point, tangent space (in the measured-Gromov-Hausdorff sense) is unique;
\item [3)]the mollifier $\rho_n(x, y)$, is a function depending  only on the distance between $x$ and $y$,   increases  polynomially with some proper order.
\end{itemize}

In this paper,  we focus on the  Bourgain-Brezis-Mironescu's formula  \eqref{eq1:intro} in the setting of abstract metric measure spaces,  namely on the existence and exact value of the constant $K$. We focus on the case $p > 1$ in order to be able to use the equivalence of different definitions of Sobolev spaces and the density of Lipschitz functions in the Sobolev norm. In  another paper \cite{HanPinamonti-MS}, we will prove a generalized Maz’ya-Shaposhnikova's formula in metric measure setting.

\bigskip 

Let us briefly summarize the highlights and main innovations of this paper.
\begin{itemize}
\item [a)] Offers a unified proof to the previous  results,  including  Bourgain-Brezis-Mironescu \cite{BBM} on  $\R^n$, Ludwig \cite{Ludwig14} on  finite dimensional Banach spaces and  Kreuml-Mordhorst \cite{BBM-RM} on  weighted Riemannian manifolds.
\item [b)] Extends the border of Bourgain-Brezis-Mironescu's formula to sub-Riemannian manifolds and $\rcdkn$ metric measure spaces, which has potential applications in geometry, analysis and probability theory.
\item [c)] Our results hold for general mollifiers (see Assumption \ref{assumption2}), and this allows us to prove more Bourgain-Brezis-Mironescu type asymptotic  formulas.  For example, our result can be used to prove the heat semi-group characterization of Sobolev functions (we refer to a recent work of Alonso Ruiz and  Baudoin \cite{BaudionRuiz20} in this direction) .
\end{itemize}

\bigskip

The structure of this paper is as follows. In Section 2 we recall the necessary notions, such as the (equivalent) definitions of Sobolev spaces on a metric measure space, measured-Gromov-Hausdorff convergence and the  Rademacher's theorem on metric measure space. In Section 3, we start by posing the basic assumptions and  proving the formula for bounded  Lipschitz functions,  then we will prove the main result  of the paper Theorem \ref{th1}. In the last section, we will show that Euclidean spaces,  weighted Riemannian  for continuous weights bounded from below,  sub-Riemannian manifolds and non-collapsed $\rcdkn$ spaces fit our setting.

\section{Preliminaries}
Let $(X,\d)$ be a complete, separable and pathwise connected metric space equipped with a locally finite doubling Borel measure $\mm$.
Given $f:\Omega\to \overline{\mathbb{R}}$, where $\Omega\subset X$ is a domain (i.e. connected open set), we call a Borel function $g:\Omega\to [0,\infty]$ an upper gradient of $f$ on $\Omega$ provided
\[
|f(x)-f(y)|\leq \int_{\gamma} g\, \d s
\] 
for all $x,y\in\Omega$ and each rectifiable curve $\gamma:[0,1]\to\Omega$ that joins $x$ and $y$. We say that the metric space $(X,\mm)$ supports a local $p$-Poincar\'e inequality ($1\leq p<\infty$) if for all $K\subset X$ compact there exist $\overline r>0$, $C>0$ and $\lambda\geq 1$ such that 
for all $f$ locally Lipschitz, $x\in K$ and $r\in (0,\overline r)$
\begin{equation}\label{eq:pi}
\fint_{B(x,r)}|f(y)-f_{B}|\, \d\mm(y)\leq Cr \left(\fint_{B(x,\lambda r)} g(y)^p\, \d \mm(y)\right)^{\frac{1}{p}}
\end{equation}
here, as usual, $f_B=\frac{1}{\mm\big(B(x,r)\big)}\int_{B(x,r)} f(y)\, \d \mm(y)=\fint _{B(x,r)}f(y)\, \d\mm(y)$.\\
It is known that  Riemannian manifolds and sub-Riemannian manifolds  support a local Poincar\'e  inequality \cite{AGM}. It is  proved by Rajala \cite{Rajala12} that $\rcdkn$ spaces also support a  local Poincar\'e inequality, and the upper gradient $g$ in \eqref{eq:pi} can be replaced by a weak upper gradient if $u$ is a Sobolev function (cf. the next subsection for the definition).

We say that a metric measure space  $(X,\d,\mm)$ supports a differentiable structure if there is a family $\{U_i, \varphi_i)\}_{i\in \N}$ of Borel charts, i.e. $U_i\subset X$ is a Borel set, $X=\cup_i U_i$ up to an $\mm$-negligible set, and $\varphi_i: X \to \R^{d(i)}$ is Lipschitz such that for every Lipschitz function $f:X\to \mathbb{R}$ is differentiable at $\mm$-a.e. $x_0\in U_i$,  i.e. there is a unique $\d f (x_0)\in \R^{d(i)}$ such that
\begin{equation}\label{cheg}
\lmts{X\ni x}{x_0} \frac{\Big |f(x)-f(x_0)-\d f(x_0)\cdot \big (\varphi(x)-\varphi(x_0)\big)\Big|}{\d(x, x_0)}=0.
\end{equation}
In his celebrated paper \cite{C-D}  Cheeger proved that if $(X,\d,\mm)$ is a doubling metric measure space and supports a Poincar\'e inequality,  then it supports a differentiable structure. 
We conclude this section recalling that in metric measure spaces the boundary area in  the smooth framework can be replaced by the Minkowski content,
\begin{equation}\label{eq:minkow}
\mm^+(E):=\liminf_{\varepsilon\to 0}\frac{\mm(E^{\varepsilon})-\mm(E)}{\varepsilon}
\end{equation}
where $E$ is a Borel set and $E^{\varepsilon}:=\{x\in X \ :\ \exists~y\in E\ \mbox{such that} ~ \d(x,y)<\varepsilon\}$ is the $\varepsilon$-neighbourhood of $E$ with respect to the metric $\d$. 

\subsection{Sobolev spaces on metric measure spaces}
Given $f : X \to \mathbb{R}$, the local Lipschitz constant $\lip{ f}: X \to [0, \infty]$ is defined as
\begin{equation}\label{eq:DeflocLip}
\lip{f}(x):= \mathop{\overline{\lim}}_{y\rightarrow x}\frac{|f(y) -f(x)|}{\d(x, y)}
\end{equation}
if $x$ is not isolated, $0$ otherwise, while the  (global) Lipschitz constant  is defined as
\[
\Lip(f):= \mathop{\sup}_{x \neq y} \frac{|f(y)-f(x)|}{\d(x,y)}.
\]
If $(X,\d)$ is a length space, we know $\Lip(f)=\sup_x \lip{f}(x)$. A function $f$ is Lipschitz and we write $f\in \Lip(X,d)$, if $\Lip(f)<\infty$, and $\Lip_c(X)$ is the collection of Lipschitz $f:X\to \R$, with compact support.

Let $1<p<\infty$. We say that a  functions  $f \in L^p(X,\mm)$  is in  the Sobolev space $W^{1,p}(X,\d,\mm)$  if  there is a sequence of Lipschitz functions $(f_n)$ converging to $f$ in $L^p$, such that 
\[
\liminf_{n \to \infty} \int_X \lip{f_n}^p \,\d\mm<\infty.
\]
Furthermore, there is a unique $L^p$-function   $|\D f|_p$, called  the minimal weak upper gradient, such that 
\begin{eqnarray*}
\int_X\big| \D f|_p^p\,\d\mm= \inf\!\Big\lbrace \liminf_{n \to \infty} \int_X \lip{f_n}^p\d\mm : f_n \in \Lip_{c}(X),\ \! f_n \to f \text{ in } L^p(X,\mm)\Big\rbrace
\end{eqnarray*}
 We refer to \cite{AGS-C} for details. For simplicity, we will neglect the parameter $p$ and denote  $|\D f|_p$ by $|\D f|$. It is known that in many relevant situations like the case of $\rcd$ spaces the value of $|\D f|_p$ is actually independent of $p$.

The  Sobolev space $W^{1,p}\ms$   endowed with the norm
\[
\|f\|^p_{W^{1,p}\ms}:=\|f\|^p_{L^p(X,\mm)}+\||\D f|\|^p_{L^p(X,\mm)}
\]
is always a Banach space, but in general  it  is not a Hilbert space.

We recall the following characterizations of Sobolev functions on metric measure spaces \cite{AGS-D}.

\begin{proposition}[Characterization of Sobolev functions]\label{prop:density}
Let $ f \in  W^{1, p}\ms$ for some $p \geq 1$.  Then there exists a sequence $(f_n)_{n\in\N}\subset \Lip_c(X)$  converging to $f$ in $L^p (X, \mm)$ such that 
\[
\lmt{n}{\infty} \int \big| \lip {f_n} - |\D f| \big|^p \,\d \mm = 0.
\]
\end{proposition}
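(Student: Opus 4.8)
The plan is to run the standard relaxation argument: start from an almost optimal approximating sequence furnished by the very definition of $|\D f|$, and then convexify it à la Mazur so that the local Lipschitz constants converge not merely in energy but in $L^p$-norm. Assume for concreteness $1<p<\infty$, which is the range relevant in this paper, so that $L^p(X,\mm)$ is reflexive (and uniformly convex).

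First I would apply the definition of $W^{1,p}\ms$: there is $(f_n)_{n\in\N}\subset\Lip_c(X)$ with $f_n\to f$ in $L^p(X,\mm)$ and $\lmt{n}{\infty}\int_X \lip{f_n}^p\dm=\int_X|\D f|^p\dm$, so that $g_n:=\lip{f_n}$ is bounded in $L^p(X,\mm)$. Passing to a subsequence, $g_n\weakto G$ weakly in $L^p(X,\mm)$ for some $G\ge 0$, and weak lower semicontinuity of the norm gives $\|G\|_{L^p(X,\mm)}\le\liminf_n\|g_n\|_{L^p(X,\mm)}=\||\D f|\|_{L^p(X,\mm)}$. Next, by Mazur's lemma there are, for every $k$, nonnegative coefficients $(\alpha^k_i)_i$ with $\sum_i\alpha^k_i=1$ and all indices used $\ge k$, such that $\tilde g_k:=\sum_i\alpha^k_i g_{n_i}\to G$ \emph{strongly} in $L^p(X,\mm)$. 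Setting $\tilde f_k:=\sum_i\alpha^k_i f_{n_i}$ we still have $\tilde f_k\in\Lip_c(X)$ and $\tilde f_k\to f$ in $L^p(X,\mm)$ (since $\sup_{n\ge k}\|f_n-f\|_{L^p(X,\mm)}\to 0$), while subadditivity and positive $1$-homogeneity of $\lip{\cdot}$ yield $\lip{\tilde f_k}\le\sum_i\alpha^k_i\lip{f_{n_i}}=\tilde g_k$ $\mm$-a.e.

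The key point — and the main obstacle — is to identify the weak limit $G$ with $|\D f|$. For this I would invoke the closedness of ($p$-weak) upper gradients under $L^p$-convergence, which is exactly what underlies the equivalence of the various notions of Sobolev space recalled above (see \cite{AGS-C,AGS-D}): since $\lip{\tilde f_k}$, and hence the larger $\tilde g_k$, is an upper gradient of $\tilde f_k$, and $\tilde f_k\to f$, $\tilde g_k\to G$ strongly in $L^p(X,\mm)$, the limit $G$ is a $p$-weak upper gradient of $f$; by minimality $|\D f|\le G$ $\mm$-a.e. Combined with the reverse norm inequality from the previous step, this forces $G=|\D f|$ $\mm$-a.e. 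This is the only place where a genuinely metric–measure input (rather than soft functional analysis) is needed.

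Finally I would upgrade the domination $\lip{\tilde f_k}\le\tilde g_k\to|\D f|$ in $L^p(X,\mm)$ to the convergence $\lip{\tilde f_k}\to|\D f|$ in $L^p(X,\mm)$. Minimality of $|\D f|$ applied to the admissible sequence $\tilde f_k\to f$ gives $\int_X|\D f|^p\dm\le\liminf_k\int_X\lip{\tilde f_k}^p\dm$, while $\limsup_k\int_X\lip{\tilde f_k}^p\dm\le\lim_k\int_X\tilde g_k^p\dm=\int_X|\D f|^p\dm$, so $\int_X\lip{\tilde f_k}^p\dm\to\int_X|\D f|^p\dm$. Since $\tilde g_k\to|\D f|$ in $L^p(X,\mm)$ implies $\tilde g_k^p\to|\D f|^p$ in $L^1(X,\mm)$, and the nonnegative functions $\tilde g_k^p-\lip{\tilde f_k}^p$ have integrals tending to $0$ and therefore converge to $0$ in $L^1(X,\mm)$, we obtain $\lip{\tilde f_k}^p\to|\D f|^p$ in $L^1(X,\mm)$. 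Using the bound $\lip{\tilde f_k}\le\tilde g_k$ together with $\tilde g_k\to|\D f|$ in $L^p(X,\mm)$, a generalized dominated convergence argument (along a.e.-convergent subsequences, and then for the full sequence by the usual subsequence principle) yields $\lip{\tilde f_k}\to|\D f|$ in $L^p(X,\mm)$. The sequence $(\tilde f_k)_{k\in\N}\subset\Lip_c(X)$ then has all the required properties.
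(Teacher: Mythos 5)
Your argument is correct in outline, but note that the paper does not prove this proposition at all: it is recalled verbatim from \cite{AGS-D}, and what you have written is essentially the standard relaxation proof from that reference (near-optimal sequence, weak compactness, Mazur, subadditivity of $\lip{\cdot}$, identification of the weak limit, Pratt/Vitali upgrade at the end). The only step carrying real content is, as you say, the identification $G=|\D f|$ $\mm$-a.e., and here your route is heavier than necessary and deserves a caveat. You pass through $p$-weak upper gradients: $\tilde g_k$ is an upper gradient of $\tilde f_k$, stability under strong $L^p$ convergence makes $G$ a $p$-weak upper gradient of $f$, and then ``minimality'' gives $G\geq |\D f|$. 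But with the paper's definition $|\D f|$ is the \emph{relaxed} gradient, so this last inequality secretly uses the full identification of the relaxed gradient with the minimal $p$-weak upper gradient, which is the deep heat-flow/Hopf--Lax theorem of \cite{AGS-C,AGS-D} — far more than is needed. The softer and self-contained route is to observe that $G$ is by construction a weak $L^p$ limit of slopes of an admissible approximating sequence, i.e.\ a relaxed gradient of $f$, and to invoke the lattice property of the set of relaxed gradients (closedness under pointwise minimum, proved by a cut-and-paste/truncation argument using locality of $\lip{\cdot}$), which yields that the minimal-norm relaxed gradient is also the $\mm$-a.e.\ minimal one; then $G\geq |\D f|$ a.e.\ follows directly and the rest of your proof is unchanged. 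Two minor points: the inequality $\lip{\tilde f_k}\leq\tilde g_k$ holds pointwise everywhere, not merely a.e., which is what you actually need to say that $\tilde g_k$ is a genuine upper gradient; and your proof excludes $p=1$, which the statement formally allows — this is really the paper's own looseness, since its definition of $W^{1,p}\ms$ is only given for $1<p<\infty$, but it is worth flagging.
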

\begin{remark}
Notice that by Mazur's lemma we can improve this convergence to strong
convergence in $W^{1, p}$, as soon as this space is reflexive; this happens for
instance in the context of $\rcd$ spaces considered in \cite{AGS-M}, with $p= 2$. Or volume doubling spaces supporting a local Poincar\'e inequality,  as proved in \cite{ACD-S}.
\end{remark}

In the proof of the main theorem, we are going to use one more equivalent characterization of Sobolev spaces, called the Hajlasz-Sobolev space $M^{1,p}\ms$ (see Proposition \ref{prop:density2}). While the norms in $W^{1,p}$ and $M^{1,p}$ do not necessarily agree, classical arguments using maximal functions (for instance, combine \cite[Theorem 4.5]{KoskelaMacManus} and \cite[Theorem 1.0.1]{KeithZhong}) imply the following result concerning the equivalence of these spaces.

\begin{proposition}[An equivalent characterization of Sobolev functions]\label{prop:density2}
 Let $p \in (1, \infty)$. Suppose that $\ms$ is a doubling metric measure space supporting a $(1, p)$-Poincar\'e inequality. Then for any $f \in W^{1,p}\ms$, there exists
$g \in  L^p(X, \mm)$ such that 
\[
|f (x) - f (y)| \leq \d(x, y) \big (g(x) + g(y)\big)
\] 
for $\mm$-a.e. $x, y \in X$ (this means that $f$ is in the Hajlasz-Sobolev space $M^{1,p}\ms$).  Moreover,  we can choose $g$ such that $\| g\|_{L^p} \leq C \| \D f\|_{L^p}$ for some universal constant   $C=C(X, p)$.
\end{proposition}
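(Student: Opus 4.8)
The plan is to assemble the statement from three classical facts about a doubling metric measure space supporting a Poincar\'e inequality: a self-improvement of the Poincar\'e exponent, a telescoping estimate along dyadic chains of balls, and the $L^s$-boundedness ($s>1$) of the Hardy-Littlewood maximal operator. The first step is to note that, since $\ms$ is doubling and supports a $(1,p)$-Poincar\'e inequality with $p>1$, the Keith-Zhong theorem \cite[Theorem 1.0.1]{KeithZhong} provides a weak $(1,q)$-Poincar\'e inequality for some $q\in(1,p)$. On such a space the inequality \eqref{eq:pi}, valid a priori for Lipschitz functions, extends by a routine approximation to Sobolev functions with their minimal weak upper gradient (cf.\ Rajala's result quoted above and Cheeger \cite{C-D}); in particular it applies to $f\in W^{1,p}\ms$ with the choice $g=|\D f|$, which lies in $L^p(X,\mm)$ and hence in $L^q(B)$ for every ball $B$.

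Next I would run the standard telescoping argument, as in \cite[Theorem 4.5]{KoskelaMacManus}: fixing Lebesgue points $x,y$ of $f$, setting $R=\d(x,y)$, applying the $(1,q)$-Poincar\'e inequality on the balls $B(x,2^{-k}R)$ and $B(y,2^{-k}R)$, $k\ge 0$, and telescoping the averages of $f$ down to $f(x)$ and $f(y)$ by means of the doubling property. This yields, for $\mm$-a.e.\ $x,y\in X$,
\[
|f(x)-f(y)|\le C\,\d(x,y)\Big(\big(\mathcal M(|\D f|^q)(x)\big)^{1/q}+\big(\mathcal M(|\D f|^q)(y)\big)^{1/q}\Big),
\]
where $\mathcal M h(x):=\sup_{r>0}\fint_{B(x,r)}|h|\dm$ is the maximal function and $C=C(X,q)$.

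To conclude, put $g:=C\big(\mathcal M(|\D f|^q)\big)^{1/q}$, a single function independent of $x,y$; the displayed inequality then reads $|f(x)-f(y)|\le\d(x,y)\big(g(x)+g(y)\big)$ for $\mm$-a.e.\ $x,y$, which is the Hajlasz condition, provided $g\in L^p$. But $q<p$ forces $|\D f|^q\in L^{p/q}(X,\mm)$ with $p/q>1$, so the Hardy-Littlewood maximal inequality on the doubling space $\ms$ gives $\|\mathcal M(|\D f|^q)\|_{L^{p/q}}\le C\,\||\D f|^q\|_{L^{p/q}}=C\,\||\D f|\|_{L^p}^q$; hence $\|g\|_{L^p}\le C(X,p)\,\||\D f|\|_{L^p}$, which is exactly the asserted bound.

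The genuinely delicate point is the first step. A telescoping argument applied directly to the original $(1,p)$-Poincar\'e inequality would only produce the candidate $g=C\big(\mathcal M(|\D f|^p)\big)^{1/p}$, and bounding this in $L^p$ would require the boundedness of $\mathcal M$ on $L^1$ — which fails. Gaining the smaller exponent $q<p$, i.e.\ invoking the Keith-Zhong self-improvement, is precisely what makes the maximal-function argument go through, and it is where the hypothesis $p>1$ is genuinely used.
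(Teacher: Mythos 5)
Your argument is correct and is exactly the route the paper indicates: the paper gives no proof of this proposition but points to combining Keith--Zhong's self-improvement \cite{KeithZhong} with the Koskela--MacManus telescoping/maximal-function argument \cite{KoskelaMacManus}, which is precisely what you carry out, including the correct identification of why the exponent gain $q<p$ is indispensable. Nothing to add.
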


\subsection{Measured Gromov-Hausdorff convergence and tangent cones}



In order to study the convergence of possibly non-compact metric measure spaces, it is useful to fix  reference points. We then say that $(X,\d,\mm,\bar{x})$ is a pointed metric measure space (p.m.m.s. for short), if $(X,\d,\mm)$ is a m.m.s. as before and $\bar{x}\in X$ plays the role of a reference point. For simplicity, we always assume $\supp \mm=X$. We will adopt the following definition of measured Gromov-Hausdorff convergence of  p.m.m.s  (see \cite{BBI-A}, \cite{GMS-C}
 and  \cite{V-O} for equivalent ways to define this convergence).

\begin{definition} Let $\epsilon\in (0,1)$. A map $\varphi : (X_1, \d_1, x_1) \to  (X_2,  \d_2, x_2)$ between two metric spaces with a distinguished point is called an $\epsilon$-isometry if
\begin{itemize}
\item[i)] $\big |\d_2(\varphi (x), \varphi (y)) - \d_1(x, y)\big| \leq  \epsilon$ ~~~ $\forall   x, y \in  B^{X_1}_{\epsilon^{-1}}(x_1)$
\item[ii)]
$B^{X_2}_{r- \epsilon} (x_2) \subset B^{X_2}_\epsilon [\varphi (B^{X_1}_r(x_1))] $~~~$\forall r \in [\epsilon, \epsilon^{-1}]$
\end{itemize}
where $B^{X_i}_\epsilon[A]:=\{z \in X_i: \, \d_i(z, A)<\epsilon\}$ for every subset $A \subset X_i$, $i=1, 2$.
\end{definition}
Note that we do not necessarily ask for  $\varphi(x_1) = x_2$, but the condition ii) implies that $\d_2(\varphi (x_1), x_2)) \leq  2\epsilon$.

Using the above definition we can introduce a notion of convergence for isomorphism classes of p.m.m.s. as follows (see e.g. \cite{GMS-C} for the details).


\begin{definition}[Pointed measured Gromov-Hausdorff convergence]\label{def:conv}
A sequence of pointed metric spaces  $(X_n, \d_n,  \bar{x}_n)$, $n\in \mathbb{N}$   converges in pointed Gromov-Hausdorff sense to $(X, \d, \bar x)$ if there exists a sequence $\epsilon_n \downarrow 0$ such that there exist $\epsilon_n$-isometries  (called Gromov-Hausdorff approximations) $\varphi_n: X_n \to X$ and $\phi_n : X \to  X_n$.

Moreover, we say that a sequence of pointed metric measure spaces $(X_n, \d_n, \mm_n, \bar{x}_n)$  converges in pointed measured  Gromov-Hausdorff sense (p-mGH for short) to a pointed metric measure space $(X,  \d, \mm, \bar x)$, if additionally $(\varphi_n)_\sharp \mm_n \to  \mm$ weakly as measures, i.e. 
\[
\lim_{n \to \infty} \int_X g \, \d \big  ((\varphi_n)_\sharp \mm_n\big) = \int_X g \, \d \mm \qquad \forall g \in C_b(X), 
\]
where $C_b(X)$ denotes the set of real valued bounded continuous functions with bounded support in $X$ and $(\varphi_n)_\sharp \mm_n(A)=\mm_n(\varphi_n^{-1}(A))$ for any $A\subset X_n$ Borel.
\end{definition}

Next, let us recall the notion of measured tangents. Let  $\ms$ be a m.m.s.,  $\bar x\in X$ and $r\in(0,1)$; we consider the rescaled and normalized p.m.m.s. $(X,r^{-1}\d,\mm^{\bar{x}}_r,\bar x)$ where the normalized  measure $\mm^{\bar x}_r$ is given by
\begin{equation}
\label{eq:normalization}
\mm^{\bar x}_r:=\frac1{\mm\big(B_r(\bar x)\big)}\mm\restr{B_r(\bar x)}.
\end{equation}
Note that by definition the unit ball  in $(X,r^{-1}\d,\mm^{\bar{x}}_r,\bar x)$  centred at $\bar{x}$ has volume 1 for any $r>0$.

\begin{definition}[Tangent cone]\label{def:tangent}
Let  $(X,\d,\mm)$ be a m.m.s. and  $\bar x\in X$. A p.m.m.s.  $(Y,\d_Y,\mm_Y,y)$ is called a
\emph{tangent cone} to $(X,\d,\mm)$ at $\bar{x} \in X$ if there exists a sequence of rescalings $r_j \downarrow 0$ so that
$(X,r_j^{-1}\d,\mm^{\bar{x}}_{r_j},\bar{x}) \to (Y,\d_Y,\mm_Y,y)$ as 
$j \to \infty$ in the p-mGH sense.
We denote the collection of all the tangent cones of $(X,\d,\mm)$ at 
$\bar{x} \in X$ by ${\rm Tan}(X,\d,\mm,\bar{x})$.  
\end{definition}

 
\begin{remark}
It is well known (by Gromov's pre-compactness theorem) that on complete metric spaces equipped with a doubling measure,  tangent cones exist for all $x \in X$,  but they are not necessarily unique \cite{Gromov07}.

In case the tangent cone is unique, we will drop the sequence $r_j$ and simply index the blow-ups  by $r \in (0, \infty)$.
\end{remark}

\subsection{Convergence of Lipschitz functions}

We recall various notions of convergence of functions defined on p-mGH converging spaces, see \cite{C-D, GMS-C} for more details.

\begin{definition}[Pointwise and uniform convergence of real-valued functions]\label{def:pointConv}
Let $\big ( (X_n, \d_n, \mm_n, \bar{x}_n) \big )_{n\in \N}$ be a  sequence of  p.m.m.s. p-mGH converging to $(X, \d, \mm, \bar x)$  and let $(f_n)_{n\in \N}$  be a sequence of real-valued functions on $X_n$. We say that $(f_n)_{n\in \N} $ converges pointwisely to a function  $ f$ on $X$ provided
\[
f_n (x_n) \to f(x)~~\forall ~x_n \in X_n ~\text{such that}~\varphi_n (x_n) \to x \text{ in } (X,\d).
\]

If for any $\epsilon>0$ there exists $N\in \mathbb N$ such that  $| f_n(x_n) - f(x)|\leq \epsilon$ for every $n \geq N$ and every $x_n \in X_n, x \in X$ with $\d(\varphi_n (x_n),x) \leq \frac{1}{N}$, we say that $(f_n)_{n\in \N}$ converges  to $f$ uniformly.
\end{definition}

Given $u\in \Lip(X, \d)$, $x\in X$ and $r>0$. 
The rescaling  function $u_{r,x}:X\to \mathbb{R}$ is defined as 
\begin{equation}\label{rescaling}
u_{r,x}(y):=\frac{u(y)-u(x)}r.
\end{equation}
It is easy to see that $u_{r, x}$, $r\geq 0$ are  uniform $\Lip(u)$-Lipschitz functions on $(X, r^{-1} \d)$.  

Assume that $X_{r_i}:=(X,{r_i}^{-1}\d,\mm^{{x}}_{r_i},{x}) \to Y_x:= (Y,\d_Y,\mm_Y,y)$ for some
$r_i \downarrow 0, i\in \N $ in the p-mGH sense (cf. Definition \ref{def:tangent}). Let  $\phi_{r_i}: X_{r_i} \to Y_x,   i\in \N$  be a family of  associated $\epsilon(r_i)$-Gromov-Hausdorff approximation maps with $\epsilon(r) \to 0 $ as $r \downarrow 0$.
By a similar argument  as classical Arzel\`a-Ascoli theorem,  we can show that  the (equi-continuous) rescaling functions $(u_{r_i, x})_{i\in \N}$  converge locally uniformly (up to taking a subsequence and relabelling) to a Lipschitz function function $u_{0,x}$ on $(Y,\d_Y)$,  in the  sense of Definition \ref{def:pointConv}. It can be checked that
this convergence can be equivalently described  as:
for any $R>0$, on $B_R(x)$ we have
\begin{equation}\label{eq:uniform}
\big \|u_{0,x}\circ \phi_{r_i}-u_{r_i,x}\big\|_{L^\infty} \leq  \alpha(r_i),
\end{equation}
where  $ \alpha(r)  \to 0 $ as $r \to 0$.

Concerning  $u_{0, x}$, there is a Rademacher type  theorem proved by  Cheeger in \cite[Theorem 10.2]{C-D}.

\begin{theorem}[Generalized Rademacher theorem]\label{th:rm}
Assume that $\ms$ supports a differentiable  structure. Then  for $\mm$-a.e. $x\in X$ (which are called  points of differentiability of $u$),   any tangent cone $Y_x$ and any limit  $u_{0,x}$ of rescaling functions we have that $u_{0, x}$ is a generalized linear function on $Y_x$. Here by a generalized linear function on $Y$ we mean a Lipschitz function that is harmonic on $Y$ with a constant function as its minimal weak upper gradient (cf. \cite[Definition 8.1]{C-D}).  
\end{theorem}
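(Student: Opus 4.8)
The plan is to reconstruct Cheeger's proof of \cite[Theorem~10.2]{C-D}: one combines the differentiable structure with volume doubling and a Poincaré inequality (the very hypotheses under which such a structure exists, and which we therefore assume) and with the stability of the Cheeger $p$-energies along p-mGH convergence.

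First I would restrict to a full-measure set of \emph{good points} $x_0$, namely those that lie in some chart $(U_k,\varphi_k)$ and are points of differentiability, in the sense of \eqref{cheg}, of $u$ and of each coordinate $\varphi_k^j$; that are Lebesgue points of $\lip{u}^p$ and of $|\D u|^p$ with $|\D u|(x_0)=\lip{u}(x_0)$ (legitimate on doubling spaces supporting a Poincaré inequality, by Cheeger's identification of the minimal weak upper gradient with the local Lipschitz constant, which also uses the Keith--Zhong self-improvement); and at which the normalized rescalings $\mm^{x_0}_r$ converge to the limit measure of the tangent cone under consideration. Each condition removes only an $\mm$-null set. Fixing such an $x_0$, a chart $(U_k,\varphi_k)$, a tangent cone $Y_{x_0}$ realized along $r_i\downarrow 0$ with approximations $\phi_{r_i}$, and a locally uniform limit $u_{0,x_0}$ of the rescalings $u_{r_i,x_0}$, I would pass to a further subsequence along which the rescaled coordinates $y\mapsto r_i^{-1}\big(\varphi_k^j(y)-\varphi_k^j(x_0)\big)$ converge locally uniformly — Arzel\`a--Ascoli, as in the discussion preceding \eqref{eq:uniform} — to Lipschitz functions $L^j$ on $Y_{x_0}$; rewriting \eqref{cheg} for the rescaled distance $r_i^{-1}\d$ then forces $u_{0,x_0}=\sum_j \partial_j u(x_0)\,L^j$, so in particular $u_{0,x_0}$ is, along this subsequence, determined by $\d u(x_0)$ and is non-constant precisely when $\d u(x_0)\neq 0$.

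The constancy of the minimal weak upper gradient is next. For each $R>0$, lower semicontinuity of the Cheeger $p$-energy under p-mGH convergence, together with the Lebesgue-point property — which gives $\int_{B_R}\lip{u_{r_i,x_0}}^p\,\d\mm^{x_0}_{r_i}\to \lip{u}(x_0)^p\,\mm_{Y_{x_0}}\big(B_R(y)\big)$ — yields $\int_{B_R(y)}|\D u_{0,x_0}|^p\,\d\mm_{Y_{x_0}}\le \lip{u}(x_0)^p\,\mm_{Y_{x_0}}\big(B_R(y)\big)$ for every $R$. For the matching lower bound I would use that tangent cones of doubling spaces supporting a Poincaré inequality again support one, with uniformly controlled constants, so that $|\D u_{0,x_0}|=\lip{u_{0,x_0}}$ $\mm_{Y_{x_0}}$-a.e.; combining this with the representation $u_{0,x_0}=\sum_j\partial_j u(x_0)\,L^j$ and with \eqref{cheg} forces $\lip{u_{0,x_0}}\equiv|\D u|(x_0)$, so that $|\D u_{0,x_0}|$ is the constant $|\D u|(x_0)$.

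Finally, harmonicity of $u_{0,x_0}$ on $Y_{x_0}$ — being a local minimizer of the Cheeger $p$-energy among Lipschitz competitors with the same boundary values on every ball — is the main obstacle. It does not follow from the previous step alone, since a function of constant minimal weak upper gradient need not minimize energy (e.g.\ $c\,\d(\cdot,H)$ for a hyperplane $H\subset\R^n$), so the argument must genuinely use that $u_{0,x_0}$ is a blow-up at a differentiability point. I would follow Cheeger's self-improvement/transplantation scheme: for $\mm_{Y_{x_0}}$-a.e.\ $z\in Y_{x_0}$, a tangent cone of $Y_{x_0}$ at $z$ is again a tangent cone of $X$ at $x_0$, and the blow-up of $u_{0,x_0}$ at $z$ is again of the form $\sum_j c_j L^j$; feeding this self-similar structure into the equality case of the Poincaré inequality on $Y_{x_0}$ and into the stability of the Dirichlet problem under p-mGH convergence pins $u_{0,x_0}$ down as energy-minimizing. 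I expect this to require the full force of Cheeger's differentiation theory — in particular the Mosco/$\Gamma$-convergence of the Cheeger energies — rather than any short self-contained argument; it is the technical heart of \cite[Theorem~10.2]{C-D}.
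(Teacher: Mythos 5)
You should first note that the paper does not actually prove this statement: Theorem \ref{th:rm} is quoted verbatim from Cheeger's \cite[Theorem 10.2]{C-D}, and the authors offer no argument of their own. So the only meaningful question is whether your reconstruction of Cheeger's proof is complete, and it is not. Your first two steps (restriction to a full-measure set of differentiability/Lebesgue points, and the identity $u_{0,x_0}=\sum_j\partial_j u(x_0)\,L^j$ obtained by dividing \eqref{cheg} by $r_i$) are sound and do follow Cheeger's scheme, as does the upper bound $\fint_{B_R}|\D u_{0,x_0}|^p\,\d\mm_{Y_{x_0}}\le \lip{u}(x_0)^p$ via lower semicontinuity of the energy along the blow-up.

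The genuine gaps are exactly at the two conclusions the theorem asserts. For constancy of the minimal weak upper gradient you need the pointwise lower bound $\lip{u_{0,x_0}}(z)\ge\lip{u}(x_0)$ for $\mm_{Y_{x_0}}$-a.e.\ $z$, and your claim that this is ``forced'' by the representation $\sum_j\partial_j u(x_0)L^j$ together with \eqref{cheg} is not an argument: \eqref{cheg} controls $u$ near $x_0$ at scale $r$ only up to an error $o(r)$, which says nothing about increments of $u_{0,x_0}$ between two points of $Y_{x_0}$ at mutual distance $\ll 1$, i.e.\ nothing about $\lip{u_{0,x_0}}(z)$ at points $z\ne y$. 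For harmonicity you correctly observe (with the example $c\,\d(\cdot,H)$) that it does not follow from gradient constancy, but you then explicitly decline to prove it, writing that you ``expect this to require the full force of Cheeger's differentiation theory.'' That is an acknowledgment that the decisive step is missing, not a proof of it. Moreover, the mechanism you gesture at (iterated tangents plus the equality case of the Poincar\'e inequality) is not Cheeger's; his argument establishes \emph{asymptotic harmonicity} at $\mm$-a.e.\ point by contradiction: if on a set of positive measure the energy of $u$ on small balls could be decreased by a definite factor by an energy minimizer with the same boundary values, a Vitali covering of such balls produces competitors $u_i\to u$ in $L^p$ with $\liminf_i\int g_{u_i}^p<\int g_u^p$, contradicting the $L^p$-lower semicontinuity that defines the minimal upper gradient; harmonicity of the blow-up then follows from asymptotic harmonicity together with the convergence of the Dirichlet problems along the rescalings, and the constancy of the gradient comes packaged with this via the theory of generalized linear functions. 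Without that covering-plus-semicontinuity argument (or an equivalent), both of the properties defining ``generalized linear'' remain unestablished.
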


\begin{remark}
It should be noticed that the tangent cones and Gromov-Hausdorff approximation maps are not unique in general, so the limit of rescaling functions are not necessarily unique. 
Though Cheeger's result does not ensure the uniqueness  of this functions in general, it was proved in \cite[Theorem 10.2]{C-D} that $\Lip u_{0, x}=\lip{u}(x)$, is independent of the particular function $u_{0,z}$ and tangent cone $Y_x$.
\end{remark}

\section{Main results}

\subsection{Basic assumptions}
\begin{assumption}\label{assumption1}
In the rest of this paper, we adopt the following  assumptions on the  metric measure space $\ms$:
\begin{itemize} 
\item [1)](\textbf{Doubling and Poincar\'e}): The measure $\mm$ is doubling and $\ms$ supports a local Poincar\'e inequality.
\item [2a)] (\textbf{Uniqueness of tangent cones}) For  $\mm$-a.e. $x\in X$, the tangent cone is unique,  and isometric to a pointed metric measure space $\mathfrak C:=(C, \d_C, \mm_C, x_0)$. 
\item [2b)] (\textbf{Uniqueness of generalized linear functions}) Given a Lipschitz function $u$.   For  $\mm$-a.e. $x\in X$  and any  two limits of rescaling functions   $u_{0,x}$ and  $\bar u_{0,x}$ (cf. Theorem \ref{th:rm}), then there is an isometry of $\mathfrak C$, denoted by $\varphi$,  such that  $u_{0,x}=\bar u_{0,x}(\varphi)$.
\item [3)] (\textbf{Good Gromov-Hausdorff approximations})  For  $\mm$-a.e. $x\in X$,   there is a family of Gromov-Hausdorff approximations   $(\phi_\delta)_{\delta>0}$ from $(X, \delta^{-1}\d,  x)$ to  $\mathfrak C=(C, \d_C, x_0)$ with $\phi_\delta(x)=x_0$,  an increasing function $\eta(\delta)=\eta(\delta, x): (0, 1) \to \R^+$ with $\lmt{\delta}{0}\eta(\delta)=0$, a measurable  set $\NN\subset X$ and a constant $N\in \mathbb{R}^+$,  such that
\begin{equation}\label{eq0:asmp}
\mm\big(\NN \cap B_\delta(x)\big)  < \eta(\delta) \delta^N
\end{equation} 
and
\begin{equation}\label{eq1:asmp}
\left | \frac{\frac1 \delta {\d(x, y)} }{\d_{ C}(x_0, \phi_\delta(y))  }-1\right | <\eta(\delta),~~~~\forall y\in B_\delta(x)\setminus \Big (\{x\} \cup \NN \Big).
\end{equation}

It can be seen that  \eqref{eq0:asmp}  is equivalent to 
\begin{equation}
\lmt{\delta}{0}\frac{\mm\big(\NN \cap B_\delta(x)\big)}{\delta^N}=0.
\end{equation}

\item [4)]   (\textbf{Uniform approximation of the  measures}) For $\mm$-a.e. $x\in X$, and any $\delta\in (0,1)$ it holds 
\begin{equation}\label{eq2:asmp}
\left |(\phi_\delta)_\sharp \big(\mm^x_\delta\restr{B_\delta(x) \setminus \NN} \big) - \mm_{ C} \restr{\phi_\delta \big(B_\delta(x) \setminus \NN \big) }\right|<\eta(\delta) \mm_{ C } \restr{\phi_\delta \big(B_\delta(x) \setminus \NN \big) }.
\end{equation}

\item [5)] (\textbf{Homogeneity/Self-similarity}) For any $r\in (0,1)$,  there is an isometry  $D_r$ from $\mathfrak C$  to $(C, r^{-1}\d_C, r^{-N}\mm_C, x_0)$ such that $D_r(x_0)=x_0$.  In particular,  
\begin{equation}\label{eq3:asmp}
(D_R)_\sharp \big(\mm_C\restr{ B_1^C(x_0)}\big)=R^{-N}\mm_C\restr{ B_R^C(x_0)}
\end{equation}
 for any $R>0$, where $B_R^C(x_0)$ denotes the ball in $\C$ centred at $x_0$ with radius $R$. In particular, $\mm_C\big( B_1^C(x_0)\big)= R^{-N} \mm_C\big( B_R^C(x_0)\big)$.
\end{itemize}

\end{assumption}
\begin{remark}
Some comments on the previous assumptions are now in order.
\begin{itemize}
\item [a)] By Cheeger's theorem,  Assumption $1)$ implies the existence of a differentiable structure on $X$ and any Lipschitz function $f:X\to \mathbb{R}$ is differentiable at $\mm$-a.e point in the sense of \eqref{cheg}.
\item [b)]  
Assumption $2)$ is satisfied in many relevant situations. 
For example it is satisfied by any oriented Riemannian manifold equipped by any volume measure and even more generally by any Ricci-limit space as proved by Cheeger-Colding  in \cite{CC-O1, CC-O2, CC-O3} and by Colding-Naber in \cite{CN-S}. Remarkably, this property is also satisfied in $\rcdkn$-space (we refer to \cite{AGS-M} for the definition) $(X,\d, \mm)$. Indeed in \cite{GMR-E} it is proved that for $\mm$-a.e. $x\in X$ there exists a blow-up sequence converging to a Euclidean space. The $\mm$-a.e.  uniqueness of the blow-up limit, together with the rectifiability of an $\rcdkn$-space, was then established in \cite{MN-S}. Finally in \cite{BrueSemolaConstant}, it is proved that  if $\ms$ is an $\rcdkn$-space for some $K\in \mathbb{R}$  and $1\leq N<\infty$, then there exists a natural number $1\leq n<N$ such that the tangent cone of $\ms$ is the $n$-dimensional Euclidean space at $\mm$-almost every point in $X$. \\
Assumption $2)$ is also satisfied by any oriented equi-regular sub-Riemannian manifolds equipped by any smooth volume (cf. Proposition \ref{prop:sr}) .
\item[c)] Combining  Lemma \ref{lemma4} and  \eqref{eq0:asmp},  the point $x$ in Assumption 3)  is a density-1 point of $X \setminus \mathcal N$ in $X$. 

\item[d)] By uniqueness of the tangent cones   Assumption $2a)$, it is not hard to see that  $(C, \d_C)$ is isometric to  $(C, r^{-1}\d_C)$. In addition, if $(X,\d)$ is geodesic then \cite[Theorem 1.2]{LeDonne2011}  imply that the metric space $(C,\d_{C})$ is a Carnot group $\mathbb{G}$ endowed with a sub-Finsler left-invariant metric with the first layer of the Lie algebra of $\mathbb{G}$ as horizontal distribution (we refer to  \cite{LeDonne2011} for all the relevant definitions).

\end{itemize}
\end{remark}

Combing Theorem \ref{th:rm} and Assumption \ref{assumption1}, we have the following stronger version of Rademacher-type theorem.
\begin{lemma}[{Rademacher-type theorem}]\label{lemma:rm}
 Given a Lipschitz function $u$. For $\mm$-a.e. $x\in X$,  there is a family of good Gromov-Hausdorff approximations from $(X, \delta^{-1}\d,  x)$ to  $\mathfrak C=(C, \d_C, x_0)$ satisfying Assumption \ref{assumption1},  still denoted by   $(\phi_\delta)_{\delta>0}$,  such that the rescaling functions   $(u_{r, x})_{r>0}$ has a unique uniform limit $u_{0, x}$ in the  sense of \eqref{eq:uniform}, i.e. there is $ \alpha(r) $  converging to 0  as $r \to 0$ such that 
\begin{equation}\label{eq2:uniform}
\big \|u_{0,x}(\phi_{r}(\cdot)) -u_{r,x}(\cdot)\big\|_{L^\infty} \leq  \alpha(r),~~\text{on}~B_R(x), ~\forall R>0.
\end{equation}

\end{lemma}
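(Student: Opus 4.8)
The plan is to combine the classical Rademacher-type theorem of Cheeger (Theorem \ref{th:rm}) with the uniqueness clauses built into Assumption \ref{assumption1}, so that for a fixed Lipschitz $u$ we can select, at $\mm$-a.e.\ $x$, a single family of good Gromov--Hausdorff approximations that simultaneously realizes the blow-up of the space to $\mathfrak C$ and of $u$ to a \emph{unique} limit $u_{0,x}$. First I would work with the full measure set $G\subset X$ obtained by intersecting: the set of differentiability points of $u$ from Theorem \ref{th:rm}; the set where the tangent cone is unique and equal to $\mathfrak C$ (Assumption 2a); the set where generalized linear limits of rescalings of $u$ are unique up to an isometry of $\mathfrak C$ (Assumption 2b); and the set where good Gromov--Hausdorff approximations $(\phi_\delta)_{\delta>0}$ with the properties \eqref{eq0:asmp}--\eqref{eq2:asmp} exist (Assumptions 3--4). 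Fix $x\in G$.

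Next I would extract, via the Arzel\`a--Ascoli argument recalled just before Theorem \ref{th:rm}, a limit function $u_{0,x}$ along \emph{some} subsequence $r_i\downarrow 0$, using the fixed approximations $(\phi_{r_i})_i$; by Theorem \ref{th:rm} this $u_{0,x}$ is a generalized linear function on $\mathfrak C$. The issue to rule out is that a different subsequence $r_j\downarrow 0$ could produce a genuinely different limit $\bar u_{0,x}$. Here Assumption 2b) says any two such limits differ by precomposition with an isometry $\varphi$ of $\mathfrak C$. The key point is that, having fixed the \emph{same} base point normalization $\phi_\delta(x)=x_0$ and exploiting the homogeneity $D_r$ of Assumption 5) together with the metric comparison \eqref{eq1:asmp}, the isometry $\varphi$ relating $u_{0,x}$ and $\bar u_{0,x}$ must fix $x_0$ and be compatible with the dilation structure; since a generalized linear function is determined by its slope $\Lip u_{0,x}=\lip u(x)$ (the Remark after Theorem \ref{th:rm}) and its vanishing at $x_0$, and since the two approximation families are a single fixed family, the precomposition cannot change $u_{0,x}$ — i.e.\ one checks $u_{0,x}=\bar u_{0,x}$ on the nose, not merely up to isometry. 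Concretely I would argue: if $u_{r_i,x}$ and $u_{r_j,x}$ both converge (in the sense \eqref{eq:uniform}) using the one fixed family $(\phi_\delta)$, then along the merged sequence $\{r_i\}\cup\{r_j\}$ equicontinuity forces a single limit, because the estimates \eqref{eq:uniform} applied to each subsequence pin down the values of the limit on a dense set independently of the subsequence.

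Once uniqueness of the subsequential limit is established, the convergence upgrades automatically from ``along a subsequence'' to ``along $r\to 0$'': a standard argument shows that if every subsequence of $(u_{r,x})_{r>0}$ has a further subsequence converging (locally uniformly, in the sense \eqref{eq:uniform}) to the same $u_{0,x}$, then $(u_{r,x})_{r>0}$ itself converges to $u_{0,x}$. Quantitatively this yields the modulus $\alpha(r)\to 0$ in \eqref{eq2:uniform}: define $\alpha(r):=\sup_{0<s\le r}\|u_{0,x}\circ\phi_s-u_{s,x}\|_{L^\infty(B_R(x))}$, which is monotone in $r$ and tends to $0$ by the established convergence, and \eqref{eq2:uniform} holds with this choice on each $B_R(x)$ (with, if necessary, an $R$-dependent modulus, which is harmless).

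The main obstacle I expect is the passage from ``unique up to an isometry of $\mathfrak C$'' (Assumption 2b) to ``genuinely unique'': one must use that the \emph{same} family of good approximations $(\phi_\delta)$ is used for all scales, together with the base-point and homogeneity normalizations, to kill the residual isometry. This is the one place where the interplay of Assumptions 2a), 2b), 3) and 5) is essential, and where a careless argument would only give uniqueness modulo automorphisms of the tangent cone rather than the clean statement \eqref{eq2:uniform}; everything else is soft measure-theoretic bookkeeping and the Arzel\`a--Ascoli/subsequence principle.
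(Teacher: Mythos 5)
There is a genuine gap at exactly the point you flag as ``the main obstacle'': your argument for killing the residual isometry does not work, and the paper's proof proceeds differently precisely because the statement you are trying to prove for the \emph{original fixed} family $(\phi_\delta)$ is false in general. Two of your supporting claims fail. First, ``equicontinuity forces a single limit'' along the merged sequence $\{r_i\}\cup\{r_j\}$ is not true: Arzel\`a--Ascoli only gives convergent subsequences, and a subsequence lying entirely in $\{r_i\}$ can converge to $u_{0,x}$ while one lying in $\{r_j\}$ converges to $\bar u_{0,x}$; the values of the limit at a point $v\in C$ are read off from values of $u$ at points of $X$ at scale $r_i$ versus scale $r_j$, i.e.\ at completely different points of $X$, so nothing ``pins down the limit on a dense set independently of the subsequence.'' Second, a generalized linear function is \emph{not} determined by its slope $\lip{u}(x)$ and its vanishing at $x_0$: already in $\R^2$ the functions $v\mapsto v_1$ and $v\mapsto v_2$ have the same Lipschitz constant and both vanish at the origin, and they differ by a rotation fixing $x_0$. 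So the isometry $\varphi$ of Assumption 2b) can be a nontrivial rotation fixing the base point, and precomposition with it genuinely changes $u_{0,x}$. At different blow-up scales the ``gradient direction'' of $u$ can drift relative to the charts $\phi_\delta$, and no normalization of base points or compatibility with the dilations $D_r$ rules this out.

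The paper's proof avoids this by \emph{modifying the approximations} rather than proving uniqueness for the given ones: if $(u_{r_i,x})$ and $(u_{\bar r_i,x})$ converge to $u_{0,x}$ and $\bar u_{0,x}=u_{0,x}\circ\varphi$ respectively, one replaces $\phi_{\bar r_i}$ by $\varphi\circ\phi_{\bar r_i}$ (still a good Gromov--Hausdorff approximation, since $\varphi$ is an isometry of $\mathfrak C$ and the conditions \eqref{eq0:asmp}--\eqref{eq2:asmp} are isometry-invariant), after which both subsequences converge to the \emph{same} $u_{0,x}$; iterating over all subsequential limits produces the family asserted in the lemma. This is exactly why the statement reads ``there is a family of good Gromov--Hausdorff approximations\dots still denoted by $(\phi_\delta)_{\delta>0}$'' instead of asserting \eqref{eq2:uniform} for the family originally provided by Assumption \ref{assumption1}-3). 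Your final subsequence-principle step and the definition of $\alpha(r)$ as a supremum are fine, but they rest on the uniqueness of subsequential limits, which you have not established; to repair the proof you should adopt the replacement argument.
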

\begin{proof}
Let  $(\phi_\delta)_{\delta>0}$ be  the good Gromov-Hausdorff approximations given in Assumption \ref{assumption1} and $(r_i)_{i\in \N}$, $(\bar r_i)_{i\in \N}$ be any two sequences of positive numbers converging to 0,  such that the rescaling functions $(u_{r_i, x})_{i\in \N}$ and $(u_{\bar r_i, x})_{i\in \N}$  converge to  generalized linear functions $u_{0,x}$  and  $\bar u_{0,x}$ respectively, i.e.
\[
\big \|u_{0,x}(\phi_{r_i}(\cdot)) -u_{r_i,x}(\cdot)\big\|_{L^\infty} \to 0
\]
and
\[
\big \|\bar u_{0,x}(\phi_{\bar r_i}(\cdot)) -u_{\bar r_i,x}(\cdot)\big\|_{L^\infty} \to 0.
\]

By Assumption \ref{assumption1}-2b), there is an isometry $\varphi$ of $\mathfrak{C}$,  such that  $\bar u_{0,x}= u_{0,x}(\varphi)$. So
\[
\big \|u_{0,x}(\varphi \circ \phi_{\bar r_i}(\cdot)) -u_{\bar r_i,x}(\cdot)\big\|_{L^\infty} \to 0.
\]
Then we  replace $(\phi_{\bar r_i})_{i}$ by $(\varphi \circ \phi_{\bar r_i} )_{i}$,  so that  both $(u_{r_i, x})_{i\in \N}$ and $(u_{\bar r_i, x})_{i\in \N}$ converge to the   limit function $ u_{0,x}$ w.r.t. these new Gromov-Hausdorff approximations. 

Repeat the argument above, we  find a desirable sequence of good Gromov-Hausdorff approximations such that the convergence \eqref{eq2:uniform} hold.
\end{proof}

\bigskip
Let  $p>1$ and $u\in W^{1, p}\ms$. The aim of this paper is to study the following limit
\begin{equation}\label{eq0:th1}
\mathop{\lim}_{n \to \infty} \int_X \int_X \frac{|u(x)-u(y)|^p}{\d^{p}(x, y)} \rho_n(x, y)\,\d\mm(x) \d\mm(y)
\end{equation}
where $(\rho_n)_{n \in \N}$ is a sequence of mollifiers $\rho_n:X\times X\to [0,\infty)$ satisfying  the following set of assumptions:
\begin{assumption}\label{assumption2}
\item [1')] (\textbf{Uniform boundedness}): 
\[
c_1:=\sup_{x\in X} \sup_{n\in \N} \|\rho_n(x, \cdot)\|_{L^1} <\infty;
\]
\item [2')] (\textbf{Radial distribution}): There is a sequence of non-negative non-increasing functions $({\tilde \rho}_n)_{n\in \N}$ such that
\[
\rho_n (x, y)=\tilde\rho_n \big( \d(x, y)\big)~~~\forall x, y\in X.
\]
\item [3')] (\textbf{Polynomial decay at infinity}):  For  any $\delta>0$, it holds the limit
\[
 \lmt{n}{\infty}  \int_0^\delta  r^{N-1} \tilde \rho_n(r)\,\d r=\Big(\mm_C^+(S^C_1)\Big )^{-1}
\]
where $S^C_r$ denotes the boundary of the  ball  $ B^{C}_r$  in the tangent cone $\mathfrak{ C}$ with radius $r$, $N\in \mathbb{R}^+$ is the constant appearing in Assumption \ref{assumption1}-3) and $\mm_C^+$ is the canonical  Minkowski content  w.r.t. $\mm_{C}$ (cf. \eqref{eq:minkow})  which is well-defined on the Borel subsets of $S_r^{C}$ (we will see that $\mm_C^+(S^C_1)=N$ in Lemma \ref{lemma4}).

 For  any $\delta>0$,  $x\in X$, it also holds
 \[
 \lmt{n}{\infty}\int_{(B_\delta(x))^c}   \rho_n(x, y)\,\d \mm(y)=0.
 \]
\end{assumption}

 \subsection{The proofs}
We will show that the limit in \eqref{eq0:th1} is
 \[
\| \nabla u\|^p_{K_{p, \mathfrak C} }:=\int \left(\lmt{\delta}{0} \Big( \frac{\mm(B_\delta(x))}{\delta^N} \Big ) \fint_{S_1^{C} } |u_{0, x}(v) |^p  \,\d\mm^+_{C}(v)\right)\,\d \mm(x).
 \]
 Note that by uniqueness of generalized linear function (cf. Assumption \ref{assumption1}-21) and Lemma \ref{lemma:rm}),  the integration of  $u_{0, x}$ on $S_1^{C} $  makes sense and  $\fint_{S_1^{C} } |u_{0, x}(v) |^p  \,\d\mm^+_{C}(v)$ is well-defined.

 First of all, we prove  the following basic estimate.
 
 \begin{proposition}\label{prop1}
Let $p\in [1, \infty)$, $u\in \Lip_c(X, \d)$ and let $(\rho_n)_{n\in\mathbb{N}}$ be a sequence of mollifiers satisfying Assumption \ref{assumption2}. Then for any  $\delta>0$,  it  holds
 \begin{eqnarray*}
&& \E_n(u):=\int_X \int_X  \frac{|u(x)-u(y)|^p}{\d^{p}(x, y)} \rho_n(x, y)\,\d\mm(x) \d\mm(y)\\
 & \leq& c_1\|{\Lip_\delta(u )}\|^p_{L^p} +C_1\| u\|^p_{L^{p}}
 \end{eqnarray*}
 where $c_1=\sup_{x\in X} \sup_{n\in\mathbb{N}} \|\rho_n(x, \cdot)\|_{L^1} $, $C_1= \frac{2^{p} c_1} {\delta^{p}} $ and $$\Lip_\delta(u)(x):=\sup_{y\in B_\delta(x)} \frac{|u(x)-u(y)|}{\d(x, y)}.$$
 \end{proposition}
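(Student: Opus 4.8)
The plan is to fix $x\in X$ and split the inner integral over $y$ into the near part $y\in B_\delta(x)$ and the far part $y\notin B_\delta(x)$, bounding the two pieces separately; since the integrand is nonnegative, Tonelli's theorem lets me split the double integral and swap the order of integration at will. Before that I would record two routine facts: that $\Lip_\delta(u)$ is Borel measurable (indeed lower semicontinuous, since $\Lip_\delta(u)(x_0)>a$ forces the existence of $y_0$ with $0<\d(x_0,y_0)<\delta$ and $|u(x_0)-u(y_0)|>a\,\d(x_0,y_0)$, and both conditions persist for $x$ near $x_0$ by continuity of $u$ and $\d$), and that $\Lip_\delta(u)\le\Lip(u)<\infty$ with support in a $\delta$-neighbourhood of $\supp u$, so the right-hand side of the claimed estimate is finite.

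For the near part, the definition of $\Lip_\delta(u)$ gives $|u(x)-u(y)|\le\Lip_\delta(u)(x)\,\d(x,y)$ for $y\in B_\delta(x)$, hence $\d(x,y)^{-p}|u(x)-u(y)|^p\le\Lip_\delta(u)(x)^p$; integrating in $y$ and using the uniform bound $\|\rho_n(x,\cdot)\|_{L^1}\le c_1$ from Assumption \ref{assumption2}-1', then integrating in $x$, produces the term $c_1\|\Lip_\delta(u)\|_{L^p}^p$. For the far part I would use $\d(x,y)\ge\delta$ together with the elementary inequality $(a+b)^p\le2^{p-1}(a^p+b^p)$ (valid for $p\ge1$) to get $\d(x,y)^{-p}|u(x)-u(y)|^p\le2^{p-1}\delta^{-p}(|u(x)|^p+|u(y)|^p)$. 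The $|u(x)|^p$ piece is bounded exactly as before (the $\rho_n(x,\cdot)$-integral over the far region is still $\le c_1$), giving $2^{p-1}\delta^{-p}c_1\|u\|_{L^p}^p$; for the $|u(y)|^p$ piece I would swap the order of integration and use the radial symmetry $\rho_n(x,y)=\tilde\rho_n(\d(x,y))=\rho_n(y,x)$ from Assumption \ref{assumption2}-2' to see that the inner integral $\int_{\{x:\,\d(x,y)\ge\delta\}}\rho_n(y,x)\,\d\mm(x)\le\|\rho_n(y,\cdot)\|_{L^1}\le c_1$, again giving $2^{p-1}\delta^{-p}c_1\|u\|_{L^p}^p$. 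Adding the three pieces yields $c_1\|\Lip_\delta(u)\|_{L^p}^p+2^p\delta^{-p}c_1\|u\|_{L^p}^p=c_1\|\Lip_\delta(u)\|_{L^p}^p+C_1\|u\|_{L^p}^p$, which is the claim.

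This argument is essentially mechanical; the only step needing a moment's care is the far-region cross term $\iint_{\d(x,y)\ge\delta}|u(y)|^p\rho_n(x,y)\,\d\mm(x)\d\mm(y)$, where one must use the symmetry of $\rho_n$ (Assumption \ref{assumption2}-2') to push the uniform $L^1$ estimate onto the $y$-slot — without radiality one would instead need to assume control of $\sup_y\|\rho_n(\cdot,y)\|_{L^1}$. It is also worth stating explicitly that the asserted bound is a priori an inequality in $[0,\infty]$, and the finiteness of its right-hand side (from $u\in\Lip_c(X)$) is what makes it meaningful, in particular showing $\E_n(u)<\infty$.
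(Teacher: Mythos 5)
Your argument is correct and follows essentially the same route as the paper: split the double integral at $\d(x,y)=\delta$, bound the near part by $c_1\|\Lip_\delta(u)\|_{L^p}^p$ via the uniform $L^1$ bound on $\rho_n(x,\cdot)$, and bound the far part by $2^{p}c_1\delta^{-p}\|u\|_{L^p}^p$ using $(a+b)^p\le 2^{p-1}(a^p+b^p)$. Your extra remarks on the measurability of $\Lip_\delta(u)$ and on the need for the radial symmetry of $\rho_n$ to handle the cross term $\iint |u(y)|^p\rho_n(x,y)$ make explicit two points the paper leaves implicit, but they do not change the method.
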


 \begin{proof}
Fix $x\in X$ and $\delta>0$.  We have the following estimate
  \begin{eqnarray*}
 &&  \int_{ B_\delta(x)} \frac{|u(x)-u(y)|^p}{\d^{p}(x, y)} \rho_n(x, y)\,\d\mm(y)  \\
  &\leq&  \int_{ B_\delta(x)} \sup_{y\in B_\delta(x)}  \frac{|u(x)-u(y)|^p}{\d^{p}(x, y)} \rho_n(x, y)\,\d\mm(y)  \\
  &\leq& \int_X  {|\Lip_\delta( u)(x)|^p} \rho_n(x, y)\, \d\mm(y) \\
  &\leq &  c_1 {|\Lip_\delta( u)(x)|^p}.
 \end{eqnarray*}
   Thus
\begin{equation}\label{eq1:prop1}
  \int_{\{(x, y) \in X \times X, \d(x, y)<\delta\}}\frac{|u(x)-u(y)|^p}{\d^{p}(x, y)} \rho_n(x, y)\,\d\mm(x)\d\mm(y) \leq c_1 \|{\Lip_\delta( u)}\|^p_{L^p(X, \mm)}.
\end{equation}
Combining with the monotonicity of $\rho_n$ (c.f. Assumption \ref{assumption2}),  we have
 \begin{eqnarray*}
 && \int_X \int_X \frac{|u(x)-u(y)|^p}{\d^{p}(x, y)} \rho_n(x, y)\,\d\mm(x) \d\mm(y) \\
 &=&\int_{\{(x, y) \in X\times X, \d(x, y)<\delta\}\cup {\{(x, y) \in X\times X, \d(x, y)\geq \delta\}}}\frac{|u(x)-u(y)|^p}{\d^{p}(x, y)} \rho_n(x, y)\,\d\mm(x) \d\mm(y) \\
  &\leq& c_1\|{\Lip_\delta( u)}\|^p_{L^p(X)}+
    \int_{x\in X} \int_{\{y\in X, \d(y, x)\geq \delta\} }\frac{|u(x)-u(y)|^p}{\delta^{p}} \rho_n(x, y)\,  \d\mm(x)\mm(y)\\
     &\leq& c_1\|{\Lip_\delta( u)}\|^p_{L^p(X)}+
   2^{p-1} \int_X \int_{X}\frac{|u(x)|^p+|u(y)|^p}{\delta^{p}} \rho_n(x, y)\,  \d\mm(x)\mm(y)\\
    &\leq&c_1 \|{\Lip_\delta( u)}\|^p_{L^p(X)}+
   \frac{2^{p} c_1} {\delta^{p}}  \| u\|^p_{L^p(X)}
 \end{eqnarray*}
 which is the thesis.

 \end{proof}

 Before proving the main result in this section, we   prove the following lemma concerning the density function.

\bigskip
\begin{lemma}\label{lemma4}
The upper and lower density functions $\theta^\pm$  are defined as
\[
\theta^+(x):=\lmts{\delta}{0} \Big( \frac{\mm(B_\delta(x))}{\delta^N} \Big )
\]
and
\[
\theta^-(x):=\lmti{\delta}{0} \Big( \frac{\mm(B_\delta(x))}{\delta^N} \Big ).
\]
Under  the Assumption \ref{assumption1}, we have
\[
\theta^+=\theta^-=\lmt{\delta}{0} \Big( \frac{\mm(B_\delta(x))}{\delta^N} \Big )=:\theta(x)<+\infty~~~~\mm-\text{a.e.}
\]
and
\[
\mm_C^+(S^C_1)=N.
\]
 \end{lemma}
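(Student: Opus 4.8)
The plan is to read off the whole statement from the self‑similarity in Assumption~\ref{assumption1}-5) and the approximation properties in Assumption~\ref{assumption1}-3),4). As a preliminary I record the exact volume growth of balls in the tangent cone. Assumption~\ref{assumption1}-5) already gives $\mm_C\big(B^C_R(x_0)\big)=R^N\mm_C\big(B^C_1(x_0)\big)$ for every $R>0$, so $R\mapsto\mm_C\big(B^C_R(x_0)\big)$ is continuous and every sphere $S^C_R$ is $\mm_C$-negligible; since $\mm_C\big(B^C_1(x_0)\big)=1$ (which follows from the normalisation \eqref{eq:normalization} together with the nullity of $\partial B^C_1(x_0)$ just noted), we obtain
\[
\mm_C\big(B^C_R(x_0)\big)=R^N\qquad\text{for all }R>0 .
\]
The identity $\mm_C^+(S^C_1)=N$ then follows at once: $\mm_C^+(S^C_1)=\lim_{\eps\to0}\eps^{-1}\big(\mm_C(B^C_{1+\eps}(x_0))-\mm_C(B^C_1(x_0))\big)=\lim_{\eps\to0}\eps^{-1}\big((1+\eps)^N-1\big)=N$.

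For the density statement, fix $x$ with $\mm(\{x\})=0$ at which Assumption~\ref{assumption1}-2a),3),4) hold, set $f(\delta):=\mm(B_\delta(x))/\delta^N$, and abbreviate $\eta(\cdot):=\eta(\cdot,x)$. The heart of the proof is the \emph{scale-uniform} estimate
\begin{equation*}
f(r\delta)\ \le\ \frac{1+\eta(\delta)}{\big(1-\eta(\delta)\big)^{N}}\,f(\delta)\ +\ \eta(r\delta)\qquad\text{for all }r\in(0,1]\text{ and all }\delta\text{ with }\eta(\delta)<1 .\tag{$\star$}
\end{equation*}
To prove $(\star)$, split $B_{r\delta}(x)$ into $B_{r\delta}(x)\setminus\NN$ and $\NN\cap B_{r\delta}(x)$. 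On the first set, \eqref{eq1:asmp} (and $\phi_\delta(x)=x_0$) forces $\phi_\delta\big(B_{r\delta}(x)\setminus\NN\big)\subset B^C_{r/(1-\eta(\delta))}(x_0)$; since moreover $B_{r\delta}(x)\setminus\NN\subset B_\delta(x)\setminus\NN$, pushing forward and using \eqref{eq2:asmp} with the volume growth above gives
\[
\mm^x_\delta\big(B_{r\delta}(x)\setminus\NN\big)\ \le\ (\phi_\delta)_\sharp\!\big(\mm^x_\delta\restr{B_\delta(x)\setminus\NN}\big)\big(B^C_{r/(1-\eta(\delta))}(x_0)\big)\ \le\ \big(1+\eta(\delta)\big)\Big(\tfrac{r}{1-\eta(\delta)}\Big)^{N} .
\]
On the second set, \eqref{eq0:asmp} gives $\mm\big(\NN\cap B_{r\delta}(x)\big)<\eta(r\delta)(r\delta)^N$, hence $\mm^x_\delta\big(\NN\cap B_{r\delta}(x)\big)<\eta(r\delta)\,r^N/f(\delta)$. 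Adding the two bounds, inserting the identity $\mm^x_\delta(B_{r\delta}(x))=\mm(B_{r\delta}(x))/\mm(B_\delta(x))=r^Nf(r\delta)/f(\delta)$, then dividing by $r^N$ and multiplying by $f(\delta)$, gives $(\star)$.

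From $(\star)$ the lemma follows quickly. Since $\eta$ is non-decreasing, $(\star)$ yields $f(t)\le A(\delta)f(\delta)+\eta(\delta)$ for every $t\in(0,\delta]$, where $A(\delta):=(1+\eta(\delta))/(1-\eta(\delta))^{N}\to1$ as $\delta\to0$. The right-hand side being finite (local finiteness of $\mm$), $f$ is bounded near $0$, so $\theta^+(x)<\infty$. Choosing $\delta_k\downarrow0$ with $f(\delta_k)\to\theta^-(x)$ and letting $k\to\infty$ in $\theta^+(x)\le A(\delta_k)f(\delta_k)+\eta(\delta_k)$ gives $\theta^+(x)\le\theta^-(x)$; the reverse inequality being trivial, $\theta^+(x)=\theta^-(x)=\theta(x):=\lim_{\delta\to0}f(\delta)<\infty$ for $\mm$-a.e.\ $x$.

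I expect the only genuine difficulty to be the proof of $(\star)$: the bookkeeping with the exceptional set $\NN$ and with the push-forward measure must be arranged so that one and the same modulus $\eta(\delta)$ controls \emph{all} radii $r\delta\le\delta$ at once. It is this uniformity --- not merely the pointwise fact $f(r\delta)/f(\delta)\to1$ for fixed $r$, which follows already from pointed measured Gromov-Hausdorff convergence but does not by itself force the existence of a limit --- that makes the argument work; and one must check that the $\NN$-contribution is of strictly lower order, which is precisely the content of \eqref{eq0:asmp} in its rescaled form $\mm(\NN\cap B_\delta(x))/\delta^N\to0$.
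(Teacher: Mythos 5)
Your proof is correct and follows essentially the same route as the paper: your scale-comparison inequality $(\star)$ is the paper's \eqref{eq1:lemma4} (with $\lambda=1/r$), obtained by the same splitting of the ball into $\NN$ and its complement and the same use of \eqref{eq1:asmp}, \eqref{eq2:asmp} and the homogeneity of $\mm_C$, followed by the same choice of sequences realizing $\theta^\pm$. If anything, your version is slightly more careful: you make the estimate uniform over all radii $r\delta\le\delta$ with a single modulus $\eta(\delta)$ (a uniformity the paper's two-scale step with $\delta_0<\bar\delta_0$ implicitly relies on), and you compute $\mm_C^+(S^C_1)=N$ directly from the definition \eqref{eq:minkow} of the outer Minkowski content of $B^C_1$ rather than through the coarea-type identity $1=\int_0^1\mm_C^+(S_r^C)\,\d r$.
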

\begin{proof}
Given  $x\in X$ and $\mathcal N\subset X$ satisfying the Assumption \ref{assumption1}.
Denote $\NN_\delta:=\NN \cap B_\delta(x)$. By Assumption \ref{assumption1},  for any $\lambda >1$,  we have
\begin{eqnarray*}
&&\frac{\mm\big(B_\delta(x)\big)}{\delta^N} \\
&=& \frac{\mm\big(B_\delta(x)\setminus \NN_\delta\big)+\mm(\NN_\delta)}{\delta^N}\\
\text {By}  ~\eqref{eq0:asmp} &=& o(1)+\frac{\mm\big(B_\delta(x)\setminus \NN_\delta\big)}{\mm\big (B_{\lambda \delta}(x)\big)} \frac {\mm\big (B_{\lambda \delta}(x)\big)} {\delta^N} \\
\text {By}  ~\eqref{eq2:asmp}&=&o(1)+\big(1+o(1)\big)\mm_C\Big(\phi_{\lambda \delta}\big(B_\delta(x)\setminus \NN_\delta\big)\Big) \frac {\mm\big (B_{\lambda \delta}(x)\big)} {\delta^N} \\
&\leq&o(1)+\big(1+o(1)\big)\mm_C\Big(\phi_{ \lambda\delta}\big(B_\delta(x)\big)\Big)\frac {\mm\big (B_{\lambda \delta}(x)\big)} {\delta^N}\\
\end{eqnarray*}

By   \eqref{eq3:asmp} in Assumption \ref{assumption1},  we have the homogeneity of $\mm_C$: $ \mm_C\big(B^C_r(x_0)\big)=r^N \mm_C\big(B^C_1(x_0)\big)=r^N$ for any $r>0$, so $\mm_C(S_r^C(x_0))=0$. Combining with  the fact that $\phi_{\lambda \delta}(B_\delta (x)) \to B_{\lambda^{-1}}^C(x_0)$ in Hausdorff distance as $\delta \to 0$, we know
\[
 \mm_C\Big(\phi_{ \lambda\delta}\big(B_\delta(x)\big)\Big)=\mm_C\Big(B_{\lambda^{-1}}^C(x_0)\Big)+o(1)=\lambda^{-N}+o(1)~~\text{as}~\delta\to 0.
\]

In conclusion, as $\delta \to 0$  we have
\begin{equation}\label{eq1:lemma4}
\frac{\mm\big(B_\delta(x)\big)}{\delta^N}\leq o(1)+\big(1+o(1)\big)\frac {\mm\big (B_{\lambda \delta}(x)\big)} {(\lambda \delta)^N}.
\end{equation}
In particular, $\theta^-(x) \leq \theta^+(x)< +\infty$.   

For any $\epsilon>0$, there are  $0<\delta_0<\bar \delta_0<\delta \ll 1$  such that 
\[
\frac{\mm\big(B_{\delta_0}(x)\big)}{\delta_0^N}>\theta^+(x)-\epsilon,
\]
and
\[
\frac{\mm\big(B_{\bar \delta_0}(x)\big)}{\bar \delta_0^N}<\theta^-(x)+\epsilon.
\]
Therefore, by \eqref{eq1:lemma4},  we can pick $\delta$ small enough such that  the following inequalities  hold
\[
\theta^+(x)-\epsilon<\frac{\mm\big(B_{\delta_0}(x)\big)}{\delta_0^N} \leq \epsilon+(1+\epsilon) \frac{\mm\big(B_{\bar \delta_0}(x)\big)}{\bar \delta_0^N}     \leq \epsilon+(1+\epsilon) \big( \theta^-(x) +\epsilon).
\]
Letting $\epsilon \to 0$, we get
\[
\theta^-(x)=\theta^+(x).
\]

\bigskip

At last, note that $\mm_C\Big(B_1^C (x_0)\Big)=1$,  by homogeneity (c.f. Assumption \ref{assumption1}-\eqref{eq3:asmp})
\begin{eqnarray*}
1&=& \mm_C\Big(B_1^C (x_0)\Big)\\
&=& \int_0^1 \mm^+_C\Big(S_r^C (x_0)\Big) \,\d r\\
&=& \int_0^1 r^{N-1}\mm^+_C \Big(S_1^C (x_0)\Big) \,\d r\\
&=&\frac1N  \mm^+_C \Big(S_1^C (x_0)\Big).
\end{eqnarray*}
Hence
\[
\mm_C^+(S^C_1)=N.
\]
\end{proof}
\bigskip

Next  in Proposition \ref{prop2} we will prove a generalized  Bourgain-Brezis-Mironescu's formula  for  Lipschitz functions with compact support. This is one of the most important results in this paper. 
 
 \begin{proposition}[Generalized  Bourgain-Brezis-Mironescu's formula]\label{prop2}
 Given $p\geq 1$ and $u\in  \Lip_c(X, \d)$.  For any $n\in \N$, define 
 \[\E_n(u):=\int_X \int_X  \frac{|u(x)-u(y)|^p}{\d^{p}(x, y)} \rho_n(x, y)\,\d\mm(x) \d\mm(y).
 \]
 It holds
\begin{equation}\label{eq0:coro1}
\lmt{n}{\infty} \E_n(u) =\| \nabla u\|^p_{K_{p, \mathfrak C}}\leq  \int \theta |\nabla u|^p\,\d \mm,
\end{equation}
where 
\[
\| \nabla u \|^p_{K_{p, \mathfrak C} }:=\big \|  | \nabla u|_{K_{p, \mathfrak C} }\big\|_{L^1(X, \mm)}
\]
with
\[
| \nabla u|_{K_{p, \mathfrak C} }:=\theta(x) \fint {S_1^{C} }   \left | u_{0,x}(v)\right| ^p\,\d\mm^+_{C}(v)
\]
and $\theta$ is the density function given in Lemma \ref{lemma4}.
 \end{proposition}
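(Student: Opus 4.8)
The plan is to localize the double integral near the diagonal, change variables using the good Gromov--Hausdorff approximations, and then exploit the homogeneity of the tangent cone to reduce the limit to an integral over the unit sphere $S_1^C$. First I would fix a point $x$ of differentiability in the sense of Lemma~\ref{lemma:rm} (so that the rescaled functions $u_{\delta,x}$ converge uniformly to $u_{0,x}$ on balls), and also a density-$1$ point of $X\setminus\NN$ satisfying Assumption~\ref{assumption1}-3),4),5) and Lemma~\ref{lemma4}. By Proposition~\ref{prop1} together with the decay at infinity in Assumption~\ref{assumption2}-3'), the contribution of $\{\d(x,y)\ge\delta\}$ is negligible as $n\to\infty$ after $\delta\to 0$, so it suffices to analyze
\[
\int_X\left(\int_{B_\delta(x)}\frac{|u(x)-u(y)|^p}{\d^p(x,y)}\tilde\rho_n(\d(x,y))\,\d\mm(y)\right)\d\mm(x).
\]

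Next I would work on the inner integral for a fixed good point $x$. Writing $\d(x,y)=r$, I use Assumption~\ref{assumption1}-3) to replace $\tfrac1\delta\d(x,y)$ by $\d_C(x_0,\phi_\delta(y))$ up to an $\eta(\delta)$-error off the negligible set $\NN$, use \eqref{eq2:asmp} to push the measure $\mm^x_\delta\restr{B_\delta(x)\setminus\NN}$ forward to $\mm_C$, and use the uniform convergence \eqref{eq2:uniform} to replace $\frac{u(y)-u(x)}{\d(x,y)}$ by $\frac{u_{0,x}(\phi_\delta(y))}{\d_C(x_0,\phi_\delta(y))}$. After rescaling ($\mm^x_\delta=\mm(B_\delta(x))^{-1}\mm\restr{B_\delta(x)}$), the inner integral becomes, up to $(1+o(1))$ and $o(1)$ terms,
\[
\frac{\mm(B_\delta(x))}{\delta^N}\,\delta^N\int_{B_1^C(x_0)}\frac{|u_{0,x}(w)|^p}{\d_C(x_0,w)^p}\,\d\mm_C(w)\cdot(\text{scaling of }\tilde\rho_n),
\]
and here the homogeneity \eqref{eq3:asmp} lets me write $\int_{B_1^C}\frac{|u_{0,x}(w)|^p}{\d_C(x_0,w)^p}\d\mm_C(w)$ in polar form as $\big(\int_0^1 r^{N-1}\tilde\rho_n(r\delta)\,\d r\big)$-weighted integral of $\fint_{S_1^C}|u_{0,x}(v)|^p\,\d\mm_C^+(v)$, using that $u_{0,x}$ is generalized linear hence $1$-homogeneous under the dilations $D_r$. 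The polynomial-decay normalization in Assumption~\ref{assumption2}-3'), namely $\lim_n\int_0^\delta r^{N-1}\tilde\rho_n(r)\,\d r=(\mm_C^+(S_1^C))^{-1}=1/N$, together with $\mm_C^+(S_1^C)=N$ from Lemma~\ref{lemma4}, is exactly what makes the constant come out to $\theta(x)\fint_{S_1^C}|u_{0,x}(v)|^p\,\d\mm_C^+(v)$ after sending first $n\to\infty$ and then $\delta\to 0$.

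The main obstacle, and the step requiring the most care, is making the interchange of limits and the $x$-integration rigorous: the error functions $\eta(\delta,x)$, $\alpha(\delta,x)$ and the density ratios $\mm(B_\delta(x))/\delta^N$ all depend on $x$ and are only controlled pointwise a.e., so I cannot directly integrate the pointwise asymptotics. The remedy is a dominated-convergence argument: Proposition~\ref{prop1} gives the uniform-in-$n$ bound $\E_n(u)\le c_1\|\Lip_\delta(u)\|_{L^p}^p+C_1\|u\|_{L^p}^p$ with $\Lip_\delta(u)\in L^p$ (since $u\in\Lip_c$), which provides an integrable envelope for the inner integrals; combined with Egorov's theorem to upgrade the a.e. pointwise convergence to near-uniform convergence on sets of large measure, one passes to the limit under the outer integral. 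Finally the inequality $\|\nabla u\|_{K_{p,\mathfrak C}}^p\le\int\theta|\nabla u|^p\,\d\mm$ follows because $\Lip(u_{0,x})=\lip u(x)$ (the remark after Theorem~\ref{th:rm}) and $|\nabla u|(x)\le\lip u(x)$ a.e., so $\fint_{S_1^C}|u_{0,x}(v)|^p\,\d\mm_C^+(v)\le\Lip(u_{0,x})^p=\lip u(x)^p$ after bounding $|u_{0,x}(v)|\le\Lip(u_{0,x})\,\d_C(x_0,v)=\Lip(u_{0,x})$ on $S_1^C$.
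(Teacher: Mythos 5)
Your overall architecture — localization near the diagonal via Proposition \ref{prop1} and Assumption \ref{assumption2}-3'), push-forward of the measure via \eqref{eq2:asmp}, polar decomposition on the cone via the homogeneity \eqref{eq3:asmp}, identification of the constant through $\lim_n\int_0^\delta r^{N-1}\tilde\rho_n(r)\,\d r=N^{-1}$, and a dominated-convergence argument for the outer $x$-integration — is the same as the paper's. However, there is a genuine gap in your treatment of the inner integral: you carry out the replacement of $\frac{u(y)-u(x)}{\d(x,y)}$ by $\frac{u_{0,x}(\phi_\delta(y))}{\d_C(x_0,\phi_\delta(y))}$ at the \emph{single} scale $\delta$ over the whole ball $B_\delta(x)$. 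The uniform convergence \eqref{eq2:uniform} only gives $|u(y)-u(x)-\delta\, u_{0,x}(\phi_\delta(y))|\le\delta\,\alpha(\delta)$, so after dividing by $\d(x,y)$ the pointwise error is of order $\delta\alpha(\delta)/\d(x,y)$, which blows up as $y\to x$. That region cannot be discarded: for the model mollifiers $\tilde\rho_n(r)=\frac{1/n}{r^{N-p/n}}$ the measure $r^{N-1}\tilde\rho_n(r)\,\d r$ concentrates \emph{all} of its mass at $r=0$ as $n\to\infty$, and the resulting error term $\int_{B_\delta(x)}\bigl(\delta\alpha(\delta)/\d(x,y)\bigr)^p\rho_n(x,y)\,\d\mm(y)$ is not even bounded in $n$. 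This is exactly why the paper's Step 1 decomposes $B_\delta(x)$ into dyadic annuli $B_{\delta_i}(x)\setminus B_{\delta_{i+1}}(x)$ with $\delta_i=2^{-i}\delta$, uses on each annulus the approximation $u_{0,x}\circ\phi_{\delta_i}$ at the \emph{matching} scale with error $A(\delta_i)=\min\{\alpha(\delta_i),3\delta_i^{-1}\d(x,y)\Lip(u)\}$ (so that the relative error stays small where $\d(x,y)\sim\delta_i$), and then needs Lemma \ref{lemma1} to reassemble the annular contributions into the single integral $\bigl(\int_0^\delta r^{N-1}\tilde\rho_n(r)\,\d r\bigr)\int_{S_1^{C}}|u_{0,x}|^p\,\d\mm_C^+$.

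Two further points. First, your parenthetical ``$u_{0,x}$ is generalized linear hence $1$-homogeneous under the dilations $D_r$'' hides a nontrivial step: the homogeneity $u_{0,x}(D_{\lambda^{-1}}(v))=\lambda^{-1}u_{0,x}(v)$ (up to an isometry of $\mathfrak C$) is precisely the content of Lemma \ref{lemma1}, and its proof uses the uniqueness of the blow-up (Assumption \ref{assumption1}-2b) and Lemma \ref{lemma:rm}), not merely the definition of generalized linearity. Second, for the last inequality in \eqref{eq0:coro1} you need $\lip{u}(x)\le|\nabla u|(x)$ $\mm$-a.e.\ (in fact equality holds for Lipschitz functions by Cheeger's theorem under Assumption \ref{assumption1}-1)); the inequality ``$|\nabla u|(x)\le\lip{u}(x)$'' that you invoke points in the wrong direction for the conclusion you want.
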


 \begin{proof}

{\bf Step 1}:  

Let $(\phi_\delta)$ be a family of Gromov-Hausdorff approximations given  in Assumption \ref{assumption1}, and  $u_{0, x}$  be the uniform limit   of the rescaling  functions $u_{\delta, x}(y):=\frac{u(y)-u(x)}\delta $ as  $\delta \downarrow 0$ (by assumption, this function is unique). Combining \eqref{eq:uniform}, \eqref{eq1:asmp} and the fact  that  $(u_{r,x})_r$ are uniform $\Lip(u)$-Lipschitz functions (w.r.t. the metric $ \d/ r$), there is an increasing function  $\alpha(r)$ satisfying  $\alpha(r) \to 0 $ as $r \downarrow  0$, such that 
\begin{equation}\label{eq0.2:prop2}
\big| u_{0,x}(\phi_r(y)) -u_{r,x}(y)\big| \leq  A(r):=\min\big\{ \alpha(r), 3r^{-1}\d(x, y)\Lip(u)\big\}
\end{equation}
for enough small $r>0$ and any $y\in B_r(x)\setminus \NN$.   Denote $\delta_i:={2^{-i}\delta},  i=0, 1,...,m,...$.  Then we can write
\begin{eqnarray*}
&&\int_{B_\delta(x) }\frac{|u(x)-u(y)|^p}{\d^{p}(x, y)}\rho_n(x, y) \,\d\mm(y)\\
&=& \sum_{i=0}^{\infty} \delta_i^{p} \int_{ B_{\delta_{i}}(x)\setminus \big( B_{\delta_{i+1}}(x) \cup \NN\big)}\frac{|u(x)-u(y)|^p}{\delta_i^{p}} \frac{\rho_n(x, y)}{\d^{p}(x, y)} \,\d\mm(y)\\
&&+\sum_{i=0}^{\infty}  \int_{\big( B_{\delta_{i}}(x)\setminus B_{\delta_{i+1}}(x)\big) \cap \NN}\frac{|u(x)-u(y)|^p}{\d^{p}(x, y)}\rho_n(x, y) \,\d\mm(y)
\end{eqnarray*}
Combining with \eqref{eq0.2:prop2}, we get:
\begin{eqnarray*}
&&\int_{B_\delta(x) }\frac{|u(x)-u(y)|^p}{\d^{p}(x, y)}\rho_n(x, y) \,\d\mm(y)\\
&\leq&\sum_{i=0}^{\infty}\underbrace{ \delta_i^{p} \int_{ B_{\delta_{i}}(x)\setminus \big( B_{\delta_{i+1}}(x) \cup \NN\big)}\left(\big | u_{0,x}(\phi_{\delta_{i}}(y))\big|^p\right ) \frac{\rho_n(x, y)}{\d^{p}(x, y)}  \,\d\mm(y)}_{I(i, \delta, n)}\\
&&+\underbrace{\sum_{i=0}^{\infty} \delta_i^{p} \int_{ B_{\delta_{i}}(x)\setminus B_{\delta_{i+1}}(x)}O\big(A(\delta_i) \big)\frac{\rho_n(x, y)}{\d^{p}(x, y)}  \,\d\mm(y)}_{II(\delta, n)}\\
&&+ |\Lip{(u)}|^p \underbrace{\sum_{i=0}^{\infty} \int_{\big(B_{\delta_i}(x)\setminus B_{\delta_{i+1}}(x)\big)  \cap \NN} {\rho_n (x, y)}\,\d\mm(y)}_{III(\delta,  n)},
\end{eqnarray*}


We will estimate the above quantities $I, II, III$ in the next  two steps.

\bigskip

{\bf Step 2}:  
By \eqref{eq1:asmp} in Assumption \ref{assumption1}   we  have:
\begin{equation}\label{eq1:prop2}
\delta^{-1} \d(x, y)= \d_C\big (x_0, \phi_\delta(y)\big)\big (1+o(1)\big)
\end{equation}
for any $y\in B_\delta(x) \setminus \NN$.

 By Assumption \ref{assumption2},  $\tilde \rho_n$ is non-increasing.   Combining with \eqref{eq1:prop2}, it holds the following estimate for  any given $\epsilon>0$ provided $\delta$ is small enough: 
\[
 \frac{\tilde \rho_n\big(\delta \d_{C}(x_0, \varphi_\delta (y))(1+\epsilon)\big)}{\big(\delta \d_{C}(x_0, \varphi_\delta (y))(1+\epsilon)\big)^{p}} \leq  \frac{\tilde \rho_n( \d(x, y))}{\d^{p} (x, y)}\leq \frac{\tilde \rho_n\big(\delta \d_{C}(x_0, \varphi_\delta (y))(1-\epsilon)\big)}{\big(\delta \d_{C}(x_0, \varphi_\delta (y))(1-\epsilon)\big)^{p}}.
\]
For simplicity,  for small $\delta$, we will formally write 
\begin{equation}\label{eq1.1:prop2}
 \frac{\tilde \rho_n( \d(x, y))}{\d^{p} (x, y)}= \frac{\tilde \rho_n\big(\delta \d_{C}(x_0, \varphi_\delta (y))(1\pm \epsilon)\big)}{\big(\delta \d_{C}(x_0, \varphi_\delta (y))(1\pm \epsilon)\big)^{p}}.
\end{equation}

Hence for   small $\delta$, we  have
\begin{eqnarray*}
&& I(i, \delta, n) \\&=&\delta_i^{p} \int_{ B_{\delta_i}(x)\setminus  \big( B_{\delta_{i+1}}(x) \cup \NN\big)}\big | u_{0,x}(\phi_{\delta_i}(y))\big|^p \frac{\rho_n(x, y)}{\d^{p}(x, y)}  \,\d\mm(y)\\ 
&=&\delta_i^{p} \int_{ B_{\delta_i}(x)\setminus  \big( B_{\delta_{i+1}}(x) \cup \NN\big)} \big | u_{0,x}(\phi_{\delta_i}(y))\big|^p  \frac{\tilde \rho_n\big(\delta_i\d_{C}(x_0, \varphi_{\delta_i} (y))(1\pm \epsilon)\big)}{\big(\delta_i \d_{C}(x_0, \varphi_{\delta_i} (y))(1\pm \epsilon)\big)^{p}} \,\d\mm(y)\\
&= &\delta_i^{p} {\mm\big (B_{\delta_i} (x)\big )} \int_{\phi_{\delta_i}\Big( B_{\delta_i}(x)\setminus  \big( B_{\delta_{i+1}}(x) \cup \NN\big) \Big)}\big | u_{0,x}(v)\big|^p \frac{\tilde \rho_n\big(\delta_i \d_{C}(x_0, v)(1\pm \epsilon)\big)}{\big(\delta_i \d_{C}(x_0, v)(1\pm \epsilon)\big)^{p}}\,\d (\phi_{\delta_i})_\sharp \mm^x_{\delta_i} (v)\\
&=& \delta_i^{p} {\mm\big (B_{\delta_i} (x)\big )} (1 \mp \epsilon) \int_{\phi_{\delta_i}\Big( B_{\delta_i}(x)\setminus  \big( B_{\delta_{i+1}}(x) \cup \NN\big) \Big)} \big | u_{0,x}(v)\big|^p \frac{\tilde \rho_n\big(\delta_i \d_{C}(x_0, v)(1\pm \epsilon)\big)}{\big(\delta_i \d_{C}(x_0, v)(1\pm \epsilon)\big)^{p}}\,\d \mm_{C}(v)
\end{eqnarray*}
where the last (in)equality follows from  \eqref{eq2:asmp} in the Assumption \ref{assumption1}.

Keep   the convention we adopted in \eqref{eq1.1:prop2}, we divide the quantity $I(i, \delta, n) $ into the following three integrals:
\begin{eqnarray*}
&&I(i, \delta, n) \\
&=& \underbrace{ \delta_i^{p} {\mm\big (B_{\delta_i} (x)\big )} (1 \mp \epsilon) \int_{B_{1}^{C}\setminus B_{\frac{1}2}^C}  \big | u_{0,x}(v)\big|^p \frac{\tilde \rho_n\big(\delta_i \d_{C}(x_0, v)(1\pm \epsilon)\big)}{\big(\delta_i \d_{C}(x_0, v)(1\pm \epsilon)\big)^{p}}\,\d \mm_{C}(v)}_{I_a}\\
&+&\underbrace{ \delta_i^{p} {\mm\big (B_{\delta_i} (x)\big )} (1\mp \epsilon) \int_{\phi_{\delta_i}\Big( B_{\delta_i}(x)\setminus  \big( B_{\delta_{i+1}}(x) \cup \NN\big) \Big)\setminus {\big( B_{1}^{C}\setminus B_{\frac{1}2}^C} \big)} \big | u_{0,x}(v)\big|^p \frac{\tilde \rho_n\big(\delta_i \d_{C}(x_0, v)(1\pm \epsilon)\big)}{\big(\delta_i \d_{C}(x_0, v)(1\pm \epsilon)\big)^{p}}\,\d \mm_{C}(v)}_{I_b}\\
&-&\underbrace{ \delta_i^{p} {\mm\big (B_{\delta_i} (x)\big )} (1\mp \epsilon) \int_{{\big( B_{1}^{C}\setminus B_{\frac{1}2}^C} \big) \setminus \phi_{\delta_i}\Big( B_{\delta_i}(x)\setminus  \big( B_{\delta_{i+1}}(x) \cup \NN\big) \Big)} \big | u_{0,x}(v)\big|^p \frac{\tilde \rho_n\big(\delta_i \d_{C}(x_0, v)(1\pm \epsilon)\big)}{\big(\delta_i \d_{C}(x_0, v)(1\pm \epsilon)\big)^{p}}\,\d \mm_{C}(v)}_{I_c}.
\end{eqnarray*}

By change of variable and \eqref{eq3:asmp} in Assumption \ref{assumption1}, we have
\begin{eqnarray*}
&&I_a(i, \delta, n)  \\
&=& {\delta_i^{p}\frac {\mm\big (B_{\delta_i} (x)\big )} {\delta_i^N} \frac {1\mp \epsilon}{(1\pm \epsilon)^N}\int_{B_{\delta_i(1\pm \epsilon)}^{C}\setminus B_{\delta_{i+1}(1\pm \epsilon)}^C}  \big | u_{0,x}\big(D_{\delta_i^{-1}(1\pm \epsilon)^{-1}}(v)\big )\big|^p \frac{\tilde \rho_n\big(\d_{C}(x_0, v)\big)}{\big( \d_{C}(x_0, v)\big)^{p}} \,\d\mm_{C}(v)}\\
&=& {\delta_i^{p}\frac {\mm\big (B_{\delta_i} (x)\big )} {\delta_i^N} \frac {1\mp \epsilon}{(1\pm \epsilon)^N}\int^{\delta_i(1\pm \epsilon)}_{\delta_{i+1}(1\pm \epsilon)}\int_{S_{r}^{C}(x_0)}  \big | u_{0,x}\big(D_{\delta_i^{-1}(1\pm \epsilon)^{-1}}(v)\big )\big|^p \frac{\tilde \rho_n\big(\d_{C}(x_0, v)\big)}{r^{p}} \,\d\mm^+_{C}(v)} \,\d r\\
&=& {\delta_i^{p}\theta^\mp(x) \frac {(1\mp \epsilon)^2}{(1\pm \epsilon)^N}\int^{\delta_i(1\pm \epsilon)}_{\delta_{i+1}(1\pm \epsilon)}\int_{S_{r}^{C}(x_0)}  \big | u_{0,x}\big(D_{\delta_i^{-1}(1\pm \epsilon)^{-1}}(v)\big )\big|^p \frac{\tilde \rho_n\big(\d_{C}(x_0, v)\big)}{r^{p}} \,\d\mm^+_{C}(v)} \,\d r
\end{eqnarray*}
where $D_\cdot$ is the isometry map  in Assumption \ref{assumption1}-5).
Following the convention in \eqref{eq1.1:prop2}, the estimate above  is  understood as
\begin{equation}\label{eq1.2:prop2}
I_a(i, \delta, n)  \leq  \theta^+(x)\frac {(1+\epsilon)^2}{(1-\epsilon)^N}\int^{\delta_i(1- \epsilon)}_{\delta_{i+1}(1-\epsilon)} \int_{S_r^{C} }  \left(\frac{ \delta_i\Big | u_{0,x}\big(D_{\delta_i^{-1}(1-\epsilon)^{-1}}(v)\big )\Big| }{r}\right)^p \tilde \rho_n (r)\,\d\mm^+_{C}(v)\,\d r
\end{equation}
and
\begin{equation}\label{eq1.3:prop2}
I_a (i, \delta, n) \geq  \theta^-(x)\frac {(1-\epsilon)^2}{(1+\epsilon)^N}\int^{\delta_i(1+ \epsilon)}_{\delta_{i+1}(1+ \epsilon)} \int_{S_r^{C} }  \left(\frac{ \delta_i\Big | u_{0,x}\big(D_{\delta_i^{-1}(1+\epsilon)^{-1}}(v)\big )\Big| }{r}\right)^p \tilde \rho_n (r)\,\d\mm^+_{C}(v)\,\d r
\end{equation}


\bigskip

{\bf Step 3}:

Given $\epsilon >0$.  By Lemma \ref{lemma1},  Lemma \ref{lemma2},  Lemma \ref{lemma3} and Lemma \ref{lemma6}, there exists $\delta>0$, such that the following (uniform) estimates hold for any $n\in \N$
\begin{equation}\label{eq1.4:prop2}
\mathop{\sum}_{i=0}^{\infty}I_a(i, \delta,  n)  \leq  \theta^+(x)\big (1+O(\epsilon)\big)\left( \int_{0}^{(1-\epsilon)\delta}  r^{N-1}  \tilde\rho_n (r) \,\d r \right)\left(\int_{S_1^{C} (x_0)}   \left | u_{0,x}(v)\right| ^p\,\d\mm^+_{C}(v)\right),
\end{equation}
\begin{equation}\label{eq2:prop2}
\mathop{\sum}_{i=0}^{\infty}\big(I_b(i, \delta, n)+I_c(i, \delta, n)\big)<\epsilon,
\end{equation}
\begin{equation}\label{eq2.1:prop2}
   II(\delta, n)<\epsilon,
\end{equation}
and
\begin{equation}\label{eq3:prop2}
  III(\delta,  n)<\epsilon.
\end{equation}

For any $\delta\in (0, 1)$, by Assumption \ref{assumption2},   it holds the following estimate:
\begin{equation}\label{eq0.1:prop2}
\lmt{n}{\infty} \int_{B^c_\delta(x)} \frac{|u(x)-u(y)|^p}{\d^{p}(x, y)}\rho_n(x, y) \,\d\mm(y) \leq \lmt{n}{\infty} \Lip(u)^p \int_{B^c_\delta(x)} \rho_n(x, y) \,\d\mm(y)=0.
\end{equation}

Combining \eqref{eq1.4:prop2}, \eqref{eq2:prop2}, \eqref{eq2.1:prop2}, \eqref{eq3:prop2}  and \eqref{eq0.1:prop2}, for $\delta$ small, we have
\begin{eqnarray*}
&&\lmts{n}{\infty}\int_{X }\frac{|u(x)-u(y)|^p}{\d^{p}(x, y)}\rho_n(x, y) \,\d\mm(y)\\ 
&=&\lmts{n}{\infty}\Big (\int_{B_\delta(x) }\frac{|u(x)-u(y)|^p}{\d^{p}(x, y)}\rho_n(x, y) \,\d\mm(y)+\int_{B^c_\delta(x) }\frac{|u(x)-u(y)|^p}{\d^{p}(x, y)}\rho_n(x, y) \,\d\mm(y)\Big)\\ 
&= & \lmts{n}{\infty}\int_{B_\delta(x) }\frac{|u(x)-u(y)|^p}{\d^{p}(x, y)}\rho_n(x, y) \,\d\mm(y)\\
&=&  \lmts{n}{\infty}\Big (I(\delta, n)+ II(\delta, n)+ III(\delta,  n)\Big)\\
&\leq&\lmts{n}{\infty}  \left (\theta^+(x)\big (1+O(\epsilon)\big)\Big( \int_{0}^{\delta}  r^{N-1}  \tilde\rho_n (r) \,\d r \Big)\Big(\int_{S_1^{C} }   \left | u_{0,x}(v)\right| ^p\,\d\mm^+_{C}(v)\Big) +3\epsilon\right)\\
&=&\theta^+(x)\big (1+O(\epsilon)\big) {\Big(\mm_C^+(S^C_1)\Big )^{-1}\Big(\int_{S_1^{C} }   \left | u_{0,x}(v)\right| ^p\,\d\mm^+_{C}(v)\Big)}+3\epsilon.
\end{eqnarray*}

Letting $\epsilon \to 0$, the above estimates implies

\begin{eqnarray*}
&&\lmts{n}{\infty}\int_{X }\frac{|u(x)-u(y)|^p}{\d^{p}(x, y)}\rho_n(x, y) \,\d\mm(y)\\ 
&\leq&\theta^+(x) \Big(\mm_C^+(S^C_1)\Big )^{-1}\Big(\int_{S_1^{C} }   \left | u_{0,x}(v)\right| ^p\,\d\mm^+_{C}(v)\Big)\\
&=&\underbrace{ {\theta^+(x)\fint_{S_1^{C} }   \left | u_{0,x}(v)\right| ^p\,\d\mm^+_{C}(v)}}_{| \nabla u|_{K^+_{p, \mathfrak C} }}.
\end{eqnarray*}

In addition,  note that $(u_{r, x})_{r>0}$ are  $\Lip_r (u)(x)$-Lipschitz on $\big( B_r(x), \d/r\big)$, so by definition $u_{0, x}$ is a $|\lip{u}(x)|$-Lipschitz function. Combining  with the conclusions obtained  above,    we  can also see that 
\begin{eqnarray*}
&&{| \nabla u|_{K^+_{p, \mathfrak C} }}\\
&\leq &  \theta^+(x) \fint_{S_1^{C} }  |\lip{u}(x)| ^p\,\d\mm^+_{C}(v)\\
&= &  \theta^+(x)  | \lip{u}(x)|^p.
\end{eqnarray*}
Thus
\begin{eqnarray*}
&&\lmts{n}{\infty}\int_{X }\frac{|u(x)-u(y)|^p}{\d^{p}(x, y)}\rho_n(x, y) \,\d\mm(y)\\ 
&\leq& \theta^+(x)  | \lip{u}(x)|^p.
\end{eqnarray*}

\bigskip

{\bf Step 4}:

Similarly, we can prove
\begin{eqnarray*}
&&\lmti{n}{\infty}\int_{X }\frac{|u(x)-u(y)|^p}{\d^{p}(x, y)}\rho_n(x, y) \,\d\mm(y)\\ 
&\geq&\underbrace{ \theta^-(x)\fint_{S_1^{C} }   \left | u_{0,x}(v)\right| ^p\,\d\mm^+_{C}(v)}_{| \nabla u|_{K^-_{p, \mathfrak C} }}.
\end{eqnarray*}

Integrating the inequalities obtained in {\bf Step 3} and  {\bf Step 4}  above,  by Fatou's lemma we prove
\begin{equation}\label{eq0:prop2}
\| |\nabla u|_{K^-_{p, \mathfrak C} }\|_{L^1 }\leq \lmti{n}{\infty} \E_n(u) \leq \lmts{n}{\infty} \E_n(u) \leq \| |\nabla u|_{K^+_{p, \mathfrak C} }\|_{L^1 }\leq  \int \theta^+ |\nabla u|^p\,\d \mm.
\end{equation}
By Lemma \ref{lemma4},  we  know $\| |\nabla u|_{K^-_{p, \mathfrak C} }\|_{L^1 }=\| |\nabla u|_{K^-_{p, \mathfrak C} }\|_{L^1 }$. Then \eqref{eq0:coro1} follows from \eqref{eq0:prop2}.

 \end{proof}
 
 \begin{lemma}\label{lemma1}
Given $\epsilon >0$.  As  $\delta \to 0$, we have
 \begin{eqnarray*}
&&\sum_{i=0}^\infty \int_{(1-\epsilon)\delta_{i+1}}^{(1-\epsilon)\delta_i} \int_{S_r^{C} }  \left(\frac{ \delta_i \Big | u_{0,x}\big(D_{\delta_i^{-1}(1-\epsilon)^{-1}}(v)\big )\Big| }{r}\right)^p \tilde \rho_n (r)\,\d\mm^+_{C}(v)\,\d r\\
&=&\big (1+O(\epsilon)\big)\left( \int_{0}^{(1-\epsilon)\delta}  r^{N-1}  \tilde\rho_n (r) \,\d r \right)\left(\int_{S_1^{C} }   \left | u_{0,x}(v)\right| ^p\,\d\mm^+_{C}(v)\right).
 \end{eqnarray*}

 \end{lemma}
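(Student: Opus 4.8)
The plan is to evaluate the $i$-th summand in closed form using the self-similarity of the tangent cone $\mathfrak C$ and the $1$-homogeneity of the generalized linear function $u_{0,x}$, and then to telescope the sum over $i$ into a single radial integral over $(0,(1-\epsilon)\delta]$.

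Write $\lambda_i:=\delta_i^{-1}(1-\epsilon)^{-1}$ and let $T_i$ denote the $i$-th term of the sum. Inside $T_i$, for each fixed $r\in[(1-\epsilon)\delta_{i+1},(1-\epsilon)\delta_i]$ I would change variables $w=D_{\lambda_i}(v)$ on the sphere $S_r^{C}$. By Assumption \ref{assumption1}-5) the dilation $D_{\lambda_i}$ is an isometry of $\mathfrak C$ onto $(C,\lambda_i^{-1}\d_C,\lambda_i^{-N}\mm_C,x_0)$, so it carries $S_r^{C}$ onto $S_{\lambda_i r}^{C}$ with $\lambda_i r=\tfrac{r}{\delta_i(1-\epsilon)}\in[\tfrac12,1]$; by \eqref{eq3:asmp} it multiplies $\mm_C$ by $\lambda_i^{N}$, hence --- by the same coarea argument used in the proof of Lemma \ref{lemma4} --- it multiplies the Minkowski content on spheres by $\lambda_i^{N-1}$, so that $\int_{S_r^{C}}g(D_{\lambda_i}(v))\,\d\mm^+_{C}(v)=\lambda_i^{-(N-1)}\int_{S_{\lambda_i r}^{C}}g\,\d\mm^+_{C}$.

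The key input is the scaling identity $\int_{S_\rho^{C}}|u_{0,x}|^p\,\d\mm^+_{C}=\rho^{\,N-1+p}\int_{S_1^{C}}|u_{0,x}|^p\,\d\mm^+_{C}$ for every $\rho>0$, which amounts to the positive $1$-homogeneity of $u_{0,x}$ for the dilations $D_\lambda$ (note $u_{0,x}(x_0)=0$). I would justify it within the hypotheses already in force: for any $\rho>0$ the function $w\mapsto\rho^{-1}u_{0,x}(D_\rho(w))$ is again a generalized linear function on $\mathfrak C$ (being a rescaling of one, harmonicity and constancy of the minimal weak upper gradient being preserved), so by the uniqueness Assumption \ref{assumption1}-2b) it coincides with $u_{0,x}$ up to an isometry of $\mathfrak C$ fixing $x_0$; since such isometries preserve $\mm^+_{C}$ on $S_1^{C}$ and the value of $\int_{S_1^{C}}|\cdot|^p\,\d\mm^+_{C}$, this gives $\int_{S_1^{C}}|u_{0,x}(D_\rho(v))|^p\,\d\mm^+_{C}(v)=\rho^{p}\int_{S_1^{C}}|u_{0,x}|^p\,\d\mm^+_{C}$, and combining with the Minkowski rescaling above yields the claimed identity. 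Feeding this into $T_i$: the inner integral over $S_r^{C}$ becomes $\lambda_i^{-(N-1)}(\lambda_i r)^{N-1+p}\int_{S_1^{C}}|u_{0,x}|^p\,\d\mm^+_{C}$, and after multiplying by $\delta_i^p/r^p$ all powers of $\delta_i$ and of $(1-\epsilon)$ cancel except a single $(1-\epsilon)^{-p}$ (using $\delta_i\lambda_i=(1-\epsilon)^{-1}$), leaving
\[
T_i=(1-\epsilon)^{-p}\Big(\int_{S_1^{C}}|u_{0,x}(v)|^p\,\d\mm^+_{C}(v)\Big)\int_{(1-\epsilon)\delta_{i+1}}^{(1-\epsilon)\delta_i}r^{N-1}\,\tilde\rho_n(r)\,\d r .
\]

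Finally, since $\delta_i=2^{-i}\delta$, the intervals $[(1-\epsilon)\delta_{i+1},(1-\epsilon)\delta_i]$, $i\ge0$, partition $(0,(1-\epsilon)\delta]$, so summing $T_i$ over $i$ turns the last integral into $\int_0^{(1-\epsilon)\delta}r^{N-1}\tilde\rho_n(r)\,\d r$; together with $(1-\epsilon)^{-p}=1+O(\epsilon)$ as $\epsilon\to0$ this is the asserted formula. I expect the genuine work to be concentrated in two places: the homogeneity/uniqueness step for $u_{0,x}$, where Assumption \ref{assumption1}-2b) and the self-similarity Assumption \ref{assumption1}-5) are really used, and the bookkeeping of the scaling exponents, where the metric rescaling, the $\mm_C$-rescaling ($\propto\delta_i^{N}$), the Minkowski-content rescaling ($\propto\delta_i^{N-1}$) and the homogeneity ($\propto\delta_i^{-p}$) must combine to leave exactly $(1-\epsilon)^{-p}$. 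Everything else is Fubini's theorem and a telescoping of intervals.
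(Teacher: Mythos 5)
Your proposal is correct and follows essentially the same route as the paper: both derive the $1$-homogeneity $u_{0,x}\circ D_\rho=\rho\,u_{0,x}$ (up to an isometry of $\mathfrak C$) from the uniqueness Assumption \ref{assumption1}-2b), rescale the spherical integrals using the $r^{N-1}$ scaling of the Minkowski content, and telescope the dyadic sum to produce $(1-\epsilon)^{-p}\int_0^{(1-\epsilon)\delta}r^{N-1}\tilde\rho_n(r)\,\d r$. The only caveat is that Assumption \ref{assumption1}-2b) identifies \emph{blow-up limits} rather than arbitrary generalized linear functions, so to invoke it you should note (as the paper does) that $\rho^{-1}u_{0,x}\circ D_\rho$ arises as the limit of the rescalings $u_{\rho r_i,x}$ along the modified approximations $D_{\rho^{-1}}\circ\varphi_{r_i}$, rather than appeal to preservation of harmonicity.
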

 \begin{proof}
 First of all, recall  that $u_{0, x}$ is a uniform limit of rescaling functions, i.e. for any  $v\in C$ and $(x_i)\subset X$ with $x_i \in X$ and $r_i \downarrow 0$,  and any Gromov-Hausdorff approximations $(\varphi_{r_i} )$ (recall that for each $i$, $\varphi_{r_i}$ approximates  $(B_1^C(x_0), \d_C)$ by $(B_{r_i}(x), r^{-1}\d)$ in Gromov-Hausdorff sense)  such that $\varphi_{r_i} (x_i) \to v$,  it holds
 \[
 u_{0, x}(v)=\lmt{i}{\infty} u_{r_i,  x}(x_i).
 \]

By assumption,  for any $\lambda>0$,  $D_{\lambda^{-1}}$ is an isometry from   $(B_\lambda^C(x_0), \lambda^{-1}\d_C)$ to $(B_{1}^C(x_0), \d_C)$,  and $\varphi_{r_i}$ approximates  $(B_{\lambda}^C(x_0), \lambda^{-1} \d_C)$ by $(B_{\lambda r_i}(x), \lambda^{-1}  r_i^{-1}\d)$.  So  $(D_{\lambda^{-1}} \circ \varphi_{r_i} )_i$ is also a sequence of Gromov-Hausdorff approximations. For each $i$, $D_{\lambda^{-1}} \circ \varphi_{r_i}$ approximates $(B_1^C(x_0), \d_C)$ by $(B_{\lambda  r_i}(x), \lambda^{-1}  r_i^{-1}\d)$), and  $D_{\lambda^{-1}}\circ \varphi_{r_i} (x_i) \to D_{\lambda^{-1}}(v)$ as $i\to \infty$. Thus by uniqueness of $ u_{0, x}$, there is an isometry $\varphi$ of $\mathfrak{C}$ such that
  \[
\underbrace{ u_{0, x}\big(\varphi \circ D_{\lambda^{-1}}(v)\big )=\lmt{i}{\infty} u_{\lambda r_i,  x}(x_i)}_{\text{By  Definition}  ~\ref{def:pointConv}}=\underbrace{\lambda^{-1} \lmt{i}{\infty} u_{ r_i,  x}(x_i)= \lambda^{-1} u_{0, x}(v)}_{\text{By  Definition}  ~\ref{def:pointConv}}.
 \]
 
 Therefore, for any $i$, there is an isometry $\varphi$  such that
 \begin{eqnarray*}
 &&  \int_{S_r^{C} }  \Big( \delta_i \left | u_{0,x}\big(D_{\delta_i^{-1}(1-\epsilon)^{-1}}(v)\big )\right| \Big)^p\,\d\mm^+_{C}(v)\\
 &=&\big( (1-\epsilon)^{-1}r \big)^p \int_{S_r^{C} }   \left | u_{0,x}\big(\varphi \circ D_{r^{-1}}(v)\big )\right| ^p\,\d\mm^+_{C}(v)\\
\text{By change of variable}   &=&\big( (1-\epsilon)^{-1}r \big)^p r^{N-1} \int_{S_1^{C} }   \left | u_{0,x}(v)\right| ^p\,\d\mm^+_{C}(v).
 \end{eqnarray*}
 
 
Hence
 \begin{eqnarray*}
 &&\sum_{i=0}^\infty  \int_{(1-\epsilon)\delta_{i+1}}^{(1-\epsilon)\delta_i} \int_{S_r^{C} }  \Big(\frac{ \delta_i \left | u_{0,x}\big(D_{\delta_i^{-1}(1-\epsilon)^{-1}}(v)\big )\right| }{r}\Big)^p \tilde \rho_n (r)\,\d\mm^+_{C}(v)\,\d r\\
 &=&(1-\epsilon)^{-p} \left(\sum_{i=0}^\infty  \int_{(1-\epsilon)\delta_{i+1}}^{(1-\epsilon)\delta_i}  r^{N-1}  \tilde\rho_n (r) \,\d r\right)  \left(\int_{S_1^{C} }   \left | u_{0,x}(v)\right| ^p\,\d\mm^+_{C}(v)\right)\\
  &=&\big (1-p\epsilon+o(\epsilon)\big)\left( \int_{0}^{(1-\epsilon)\delta}  r^{N-1}  \tilde\rho_n (r) \,\d r \right)\left(\int_{S_1^{C} }   \left | u_{0,x}(v)\right| ^p\,\d\mm^+_{C}(v)\right)
 \end{eqnarray*}
 which is the thesis.
 \end{proof}
 
  \begin{lemma}\label{lemma2}
  As $\delta \to 0$, it holds the following uniform estimate (w.r.t. $n$):
  \[
II(\delta, n) =o(1).
  \]
  In other words, for any $\epsilon>0$,  there is $\delta>0$ such that 
\begin{equation}\label{eq1:lemma2}
   II(\delta, n)<\epsilon
\end{equation}
for all $n\in \N$.
 \end{lemma}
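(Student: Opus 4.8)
The plan is to observe that the whole point of the term $II(\delta,n)$ is to collect the error produced when one replaces the rescaled quotient $|u(x)-u(y)|^{p}/\delta_i^{p}$ by $|u_{0,x}(\phi_{\delta_i}(y))|^{p}$ on the annuli; this error is governed by $A(\delta_i)$, and the key is that $A(\delta_i)\le\alpha(\delta_i)\le\alpha(\delta)$ because $\alpha$ is nondecreasing and $\delta_i=2^{-i}\delta\le\delta$, so the error amplitude is uniformly small, while the remaining kernel mass is $\le c_1$ independently of $n$. Recall from Step~1 of the proof of Proposition~\ref{prop2} that, with $\delta_i=2^{-i}\delta$,
\[
II(\delta,n)=\sum_{i=0}^{\infty}\delta_i^{p}\int_{B_{\delta_i}(x)\setminus B_{\delta_{i+1}}(x)}O\big(A(\delta_i)\big)\,\frac{\rho_n(x,y)}{\d^{p}(x,y)}\,\d\mm(y),
\]
where $A(r)=\min\{\alpha(r),\,3r^{-1}\d(x,y)\Lip(u)\}$ is as in \eqref{eq0.2:prop2}; the constant hidden in $O(\cdot)$ depends only on $p$ and $\Lip(u)$, since it comes from $\big||a|^{p}-|b|^{p}\big|\le p(\max\{|a|,|b|\})^{p-1}|a-b|$ with $a=u_{\delta_i,x}(y)$, $b=u_{0,x}(\phi_{\delta_i}(y))$, both bounded by $2\Lip(u)$ on $B_{\delta_i}(x)$.

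First I would use that on $B_{\delta_i}(x)\setminus B_{\delta_{i+1}}(x)$ one has $\d(x,y)\ge\delta_{i+1}=\delta_i/2$, whence $\delta_i^{p}/\d^{p}(x,y)\le 2^{p}$, and then bound $A(\delta_i)\le\alpha(\delta_i)\le\alpha(\delta)$ by monotonicity. Taking absolute values this gives
\[
|II(\delta,n)|\le C(p,\Lip(u))\,2^{p}\,\alpha(\delta)\sum_{i=0}^{\infty}\int_{B_{\delta_i}(x)\setminus B_{\delta_{i+1}}(x)}\rho_n(x,y)\,\d\mm(y).
\]
Since the annuli $B_{\delta_i}(x)\setminus B_{\delta_{i+1}}(x)$, $i\ge 0$, are pairwise disjoint and all contained in $B_\delta(x)$, the series is dominated by $\int_{B_\delta(x)}\rho_n(x,y)\,\d\mm(y)\le\|\rho_n(x,\cdot)\|_{L^1}\le c_1$ by Assumption~\ref{assumption2}-1'). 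Hence $|II(\delta,n)|\le C(p,\Lip(u))\,2^{p}c_1\,\alpha(\delta)$, which is independent of $n$ and, by \eqref{eq0.2:prop2}, tends to $0$ as $\delta\to 0$; given $\epsilon>0$ one then picks $\delta$ small enough that this upper bound is $<\epsilon$, which is \eqref{eq1:lemma2}.

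There is no genuine obstacle in this lemma — it is a direct estimate — and the only point requiring attention is the uniformity in $n$, which holds precisely because the sole $n$-dependence enters through $\rho_n$ and that is controlled uniformly in $L^{1}$ by Assumption~\ref{assumption2}-1'). I would also remark that of the two competing arguments defining $A(r)$ only the first, $\alpha(r)$, is used here; the second argument $3r^{-1}\d(x,y)\Lip(u)$ plays no role in this estimate and becomes relevant only in the companion lemmas controlling $I_b+I_c$ and $III$.
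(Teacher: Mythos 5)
Your proof is correct and follows essentially the same route as the paper's: bound $A(\delta_i)\le\alpha(\delta_i)\le\alpha(\delta)$ by monotonicity, use $\d(x,y)\ge\delta_{i+1}$ on each annulus to absorb $\delta_i^{p}/\d^{p}(x,y)\le 2^{p}$, and sum the disjoint annuli against $\|\rho_n(x,\cdot)\|_{L^1}\le c_1$ to get a bound $\lesssim 2^{p}c_1\alpha(\delta)$ uniform in $n$. Your extra justification of the constant hidden in the $O(A(\delta_i))$ term (via $\big||a|^{p}-|b|^{p}\big|\le p\max\{|a|,|b|\}^{p-1}|a-b|$ with both arguments bounded by a multiple of $\Lip(u)$) is a harmless and welcome clarification of what the paper leaves implicit in its ``$\lesssim$''.
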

 
 \begin{proof}
 
By \eqref{eq0.2:prop2}, as $\delta \to 0$ it holds
\begin{eqnarray*}
&& II(\delta, n)\\
&\lesssim& \sum_{i=0}^{\infty} \delta_i^{p} \int_{ B_{\delta_{i}}(x)\setminus B_{\delta_{i+1}}(x)}A(\delta_i) \frac{\rho_n(x, y)}{\d^{p}(x, y)}  \,\d\mm(y)\\
&\leq &  \alpha(\delta) \sum_{i=0}^{\infty} \delta_i^{p} \int_{ B_{\delta_{i}}(x)\setminus B_{\delta_{i+1}}(x)} \frac{\rho_n(x, y)}{\delta_{i+1}^{p}}  \,\d\mm(y)\\
&=& { 2^p\alpha(\delta)} \int_{ B_{\delta}(x)} \rho_n(x, y)  \,\d\mm(y)\\
&\leq & 2^p\alpha(\delta) c_1
\end{eqnarray*}
 where $c_1=\sup_x \sup_n \|\rho_n(x, \cdot)\|_{L^1}$ and $\alpha(\delta) \to 0$ as $\delta \to 0$.
So for any $\epsilon>0$, there is $\delta_0>0$ such that 
\begin{equation}\label{eq2:lemma2}
  II(\delta,  n)<\epsilon
\end{equation}
for all $\delta\leq \delta_0$ and $n\in \N$.
 \end{proof}
 \bigskip
 
\begin{lemma}\label{lemma3}
Let $x\in X$ be a point with positive density (i.e. $\theta(x)\in (0, +\infty)$, see Lemma \ref{lemma4}).  For any measurable set $E\subset X$ with $\mm\big(E\cap B_r(x)\big)=o(r^N)$, we have a  uniform (w.r.t. $n$) estimate
\[
\sum_{i=0}^{\infty} \int_{\big(B_{\delta_i}(x)\setminus B_{\delta_{i+1}}(x)\big)  \cap E} {\rho_n (x, y)}\,\d\mm(y)=o(1)
\]
 for small $\delta>0$. Similarly, for any $E'\subset C$ with $\mm_C(E'\cap B^C_r(x_0))=o(r^N)$, we have the following  estimate for small $\delta>0$
\[
 \sum_{i=0}^{\infty} \int_{\big(B^C_{\delta_i}(x_0)\setminus B^C_{\delta_{i+1}}(x_0)\big)  \cap E'} {\rho_n (x_0, v)}\,\d\mm_C(v)=o(1).
\]

In particular,  it holds the following uniform estimate (w.r.t. $n$):
\[
 III(\delta,  n)=o(1),
\]
i.e. for any $\epsilon>0$, there is $\delta_0>0$ such that 
\[
 III(\delta,   n) <\epsilon,~~~~ \forall n\in \N,~\forall \delta<\delta_0.
\]

 \end{lemma}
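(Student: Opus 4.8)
The plan is to use the annular decomposition already appearing in the statement together with the monotonicity of $\tilde\rho_n$ (Assumption \ref{assumption2}-2')) and the uniform-in-$n$ bound on $\int_0^{\delta_0}r^{N-1}\tilde\rho_n(r)\,\d r$ furnished by Assumption \ref{assumption2}-3'); write $\delta_i:=2^{-i}\delta$ throughout. I would first rewrite the hypothesis in a handier form: from $\mm\big(E\cap B_r(x)\big)=o(r^N)$ one obtains a non-decreasing function $\omega$, finite for small $r$ and with $\omega(r)\downarrow 0$ as $r\downarrow 0$ (e.g. $\omega(r):=\sup_{0<s\leq r}\mm\big(E\cap B_s(x)\big)/s^N$), such that $\mm\big(E\cap B_{\delta_i}(x)\big)\leq\omega(\delta)\,\delta_i^N$ for every $i$ as soon as $\delta$ is small enough. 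On the annulus $A_i:=B_{\delta_i}(x)\setminus B_{\delta_{i+1}}(x)$ one has $\d(x,y)\geq\delta_{i+1}$, whence $\rho_n(x,y)=\tilde\rho_n(\d(x,y))\leq\tilde\rho_n(\delta_{i+1})$ and therefore
\[
\int_{A_i\cap E}\rho_n(x,y)\,\d\mm(y)\ \leq\ \tilde\rho_n(\delta_{i+1})\,\mm\big(E\cap B_{\delta_i}(x)\big)\ \leq\ 2^{N}\,\omega(\delta)\,\tilde\rho_n(\delta_{i+1})\,\delta_{i+1}^{N}.
\]

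The crux is the elementary estimate $\tilde\rho_n(\delta_{i+1})\,\delta_{i+1}^{N}\leq\frac{N}{1-2^{-N}}\int_{\delta_{i+2}}^{\delta_{i+1}}r^{N-1}\tilde\rho_n(r)\,\d r$, which follows from the monotonicity of $\tilde\rho_n$ and $\delta_{i+2}=\delta_{i+1}/2$. Summing over $i\geq 0$ and using that the intervals $(\delta_{i+2},\delta_{i+1})$ tile $(0,\delta/2)$ together with $\tilde\rho_n\geq 0$ gives
\[
\sum_{i=0}^{\infty}\int_{A_i\cap E}\rho_n(x,y)\,\d\mm(y)\ \leq\ \frac{2^{N}N}{1-2^{-N}}\,\omega(\delta)\int_0^{\delta/2}r^{N-1}\tilde\rho_n(r)\,\d r.
\]
By Assumption \ref{assumption2}-3') the sequence $\big(\int_0^{\delta_0}r^{N-1}\tilde\rho_n(r)\,\d r\big)_{n}$ converges (to $1/N$) for a fixed small $\delta_0$, hence is bounded by some $M<\infty$; restricting to $\delta<\delta_0$ and recalling $\tilde\rho_n\geq 0$, the right-hand side is at most $\frac{2^{N}N}{1-2^{-N}}\,M\,\omega(\delta)$, which tends to $0$ as $\delta\to 0$ uniformly in $n$. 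This proves the first assertion.

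The statement for $E'\subset C$ with $\mm_C\big(E'\cap B_r^C(x_0)\big)=o(r^N)$ is proved verbatim inside the tangent cone $\mathfrak C$: only the radial form of $\rho_n$ and the same integral $\int_0^{\delta/2}r^{N-1}\tilde\rho_n(r)\,\d r$ are used (the homogeneity $\mm_C(B_r^C(x_0))=r^N$ of Assumption \ref{assumption1}-5) may be invoked but is not needed). Finally, $III(\delta,n)$ is precisely this first estimate applied to $E=\NN$: Assumption \ref{assumption1}-3) gives $\mm\big(\NN\cap B_r(x)\big)<\eta(r)\,r^N=o(r^N)$, so $III(\delta,n)=o(1)$ uniformly in $n$, i.e. for every $\epsilon>0$ there is $\delta_0>0$ with $III(\delta,n)<\epsilon$ for all $n\in\N$ and all $\delta<\delta_0$. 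The one genuinely delicate point is the uniformity in $n$, which rests precisely on the boundedness of $\int_0^{\delta_0}r^{N-1}\tilde\rho_n(r)\,\d r$ provided by Assumption \ref{assumption2}-3'); everything else is just telescoping and monotonicity.
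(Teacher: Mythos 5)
Your argument is correct, and its skeleton is the same as the paper's: decompose over the dyadic annuli $B_{\delta_i}(x)\setminus B_{\delta_{i+1}}(x)$, use the radial monotonicity of $\tilde\rho_n$ to bound $\rho_n(x,y)\le\tilde\rho_n(\delta_{i+1})$ there, extract a modulus $\omega(\delta)\to 0$ from the hypothesis $\mm(E\cap B_r(x))=o(r^N)$, and telescope. The two proofs part ways only in how the resulting sum $\sum_i \omega(\delta)\,\delta_i^N\tilde\rho_n(\delta_{i+1})$ is closed uniformly in $n$. The paper writes $\delta_i^N\tilde\rho_n(\delta_{i+1})$ as a constant times $(\delta_{i+1}^N-\delta_{i+2}^N)\tilde\rho_n(\delta_{i+1})$, bounds this by $\int_{B_{\delta_{i+1}}(x)\setminus B_{\delta_{i+2}}(x)}\rho_n(x,y)\,\d\mm(y)$ up to a constant — a step that needs $\mm\big(B_r(x)\big)\approx\theta(x)r^N$ with $\theta(x)>0$, i.e. the positive-density hypothesis of the lemma — and then sums to $\lesssim\eta(\delta)\,c_1$ via Assumption \ref{assumption2}-1'). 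You instead dominate $\delta_{i+1}^N\tilde\rho_n(\delta_{i+1})$ by $\frac{N}{1-2^{-N}}\int_{\delta_{i+2}}^{\delta_{i+1}}r^{N-1}\tilde\rho_n(r)\,\d r$ and sum to the radial integral $\int_0^{\delta/2}r^{N-1}\tilde\rho_n(r)\,\d r$, which is bounded uniformly in $n$ because Assumption \ref{assumption2}-3') makes it a convergent, hence bounded, sequence for a fixed upper limit. Both closings are valid and of comparable length; yours has the mild advantages of not invoking the density hypothesis at all for the first assertion (and of treating the statement in $X$ and the one in the cone $\mathfrak C$ literally identically), and of isolating exactly where the uniformity in $n$ comes from, whereas the paper's version leans on the $L^1$ bound $c_1$ and is the reason the lemma is stated at points of positive density.
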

 
 \begin{proof}

We  will only  prove the estimate concerning
\[
\sum_{i=0}^{\infty} \int_{\big(B_{\delta_i}(x)\setminus B_{\delta_{i+1}}(x)\big)  \cap \NN} {\rho_n (x, y)}\,\d\mm(y).
\]
The second estimate can be proved in the same way.

  By  assumption  there is  an increasing  function $\eta$ satisfying $\lmt{r}{0}\eta(r)=0$, such that  $\mm(E \cap B_r)  \leq \eta(r) r^N$. Combining with the monotonicity of $\tilde \rho$, we have
 \begin{eqnarray*}
&&  \sum_{i=0}^{\infty}   \int_{\big(B_{\delta_i}(x)\setminus B_{\delta_{i+1}}(x)\big)  \cap E } {\rho_n (x, y)}\,\d\mm(y) \\
&\leq &  \sum_{i=0}^{\infty}  \eta(\delta_i) \delta_i^N {\tilde \rho_n (\delta_{i+1})}\\
&=&  \sum_{i=0}^{\infty}  \eta(\delta_i)\frac{ \delta_{i}^N}{\delta_{i+1}^N -\delta_{i+2}^N } \Big( (\delta_{i+1}^N -\delta_{i+2}^N ){\tilde \rho_n (\delta_{i+1})}\Big)\\
&\lesssim&   \eta(\delta) \sum_{i=0}^{\infty}   \int_{\big(B_{\delta_{i+1}}(x)\setminus B_{\delta_{i+2}}(x)\big) }  {\rho_n (x, y)}\,\d\mm(y)\\
&\leq& \eta(\delta) c_1
 \end{eqnarray*}
which is the thesis.
 \end{proof}

\bigskip

\begin{lemma}\label{lemma5}
Denote $${\mathcal M}^b_{i}(\delta):=\phi_{\delta_i}\Big( B_{\delta_i}(x)\setminus  \big( B_{\delta_{i+1}}(x) \cup \NN\big) \Big)\setminus  \big(B^C_1(x)\setminus  B^C_{\frac 12} (x) \big) $$
and
$${\mathcal M}^c_{i}(\delta):=\big(B^C_1(x)\setminus  B^C_{\frac 12}  (x) \big)  \setminus \phi_{\delta_i}\Big( B_{\delta_i}(x)\setminus  \big( B_{\delta_{i+1}}(x) \cup \NN\big) \Big).$$
Then for $\mm$-a.e. $x$, as $\delta \downarrow 0$, we have
\[
\mm_C({\mathcal M}^b_{i})+\mm_C({\mathcal M}^c_{i})= o(1).
\]
\end{lemma}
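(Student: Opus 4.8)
The plan is to show that both error sets $\mathcal M^b_i(\delta)$ and $\mathcal M^c_i(\delta)$ are contained in a thin annular neighbourhood of $S^C_{1/2}\cup S^C_1$ in the tangent cone, plus the image of the bad set $\NN$, and then to estimate the $\mm_C$-measure of each piece separately. Concretely, I would first fix a point $x$ of differentiability that lies in the density-$1$ set of $X\setminus\NN$ and for which all of Assumption \ref{assumption1} holds; by Remark c) after Assumption \ref{assumption1} and Lemma \ref{lemma4}, $\mm$-a.e. $x$ qualifies. I write $\phi_\delta$ for the good Gromov--Hausdorff approximations and recall from \eqref{eq1:asmp} that for $y\in B_{\delta_i}(x)\setminus\NN$ one has $\delta_i^{-1}\d(x,y)=\d_C(x_0,\phi_{\delta_i}(y))(1+o(1))$ uniformly as $\delta\to 0$, so the annulus $B_{\delta_i}(x)\setminus(B_{\delta_{i+1}}(x)\cup\NN)$ is carried by $\phi_{\delta_i}$ into a set squeezed between $B^C_{(1-\eta(\delta))/1}$ and $B^C_{(1+\eta(\delta))/1}$ and outside $B^C_{(1-\eta(\delta))/2}$, up to the exceptional contribution of $\NN$.

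The key steps, in order, are: (1) From \eqref{eq1:asmp}, deduce the inclusion
\[
\phi_{\delta_i}\big(B_{\delta_i}(x)\setminus(B_{\delta_{i+1}}(x)\cup\NN)\big)\subset \big(B^C_{1+\eta(\delta_i)}\setminus B^C_{(1-\eta(\delta_i))/2}\big),
\]
and, conversely, that every point of $B^C_1\setminus B^C_{1/2}$ at $\d_C$-distance at least $\eta(\delta_i)$ from $S^C_{1/2}\cup S^C_1$ lies in the image $\phi_{\delta_i}(B_{\delta_i}(x)\setminus B_{\delta_{i+1}}(x))$, using property ii) of the $\epsilon$-isometry together with \eqref{eq1:asmp}. (2) Conclude that
\[
\mathcal M^b_i(\delta)\cup\mathcal M^c_i(\delta)\subset \Big\{v\in C:\ \big|\d_C(x_0,v)-\tfrac12\big|\le\eta(\delta_i)\ \text{or}\ |\d_C(x_0,v)-1|\le\eta(\delta_i)\Big\}\ \cup\ \phi_{\delta_i}(\NN\cap B_{\delta_i}(x)).
\]
(3) Estimate the measure of the annular part: by the coarea-type identity $\mm_C(B^C_r)=\int_0^r \mm_C^+(S^C_\tau)\,\d\tau=\int_0^r \tau^{N-1}N\,\d\tau$ from Lemma \ref{lemma4}, the $\mm_C$-measure of $\{|\d_C(x_0,v)-a|\le\eta(\delta_i)\}$ is $O(\eta(\delta_i))$ for $a\in\{1/2,1\}$, hence $o(1)$ as $\delta\to0$. (4) Estimate the measure of $\phi_{\delta_i}(\NN\cap B_{\delta_i}(x))$: by \eqref{eq2:asmp} the pushforward $(\phi_{\delta_i})_\sharp(\mm^x_{\delta_i}\restr{B_{\delta_i}(x)\setminus\NN})$ is comparable to $\mm_C$ on its image, so combined with \eqref{eq0:asmp} giving $\mm(\NN\cap B_{\delta_i}(x))=o(\delta_i^N)=o(\mm(B_{\delta_i}(x)))$ and the normalization \eqref{eq:normalization}, the image of $\NN$ has $\mm_C$-measure $o(1)$ as well. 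Summing the bounds from (3) and (4) gives $\mm_C(\mathcal M^b_i)+\mm_C(\mathcal M^c_i)=o(1)$.

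The main obstacle I anticipate is step (1), and more precisely the \emph{reverse} inclusion needed to control $\mathcal M^c_i$: one must argue that interior points of the model annulus are genuinely hit by $\phi_{\delta_i}$ and not merely approximated. Here one cannot use $\phi_{\delta_i}$ alone; I would invoke the companion approximation $\phi_\delta^{-1}$-type map (the $\epsilon$-isometry from $\mathfrak C$ back into $X$, guaranteed by Definition \ref{def:conv}), check that its image of a point $v$ with $\d_C(x_0,v)\in(\tfrac12+\eta,1-\eta)$ lands in $B_{\delta_i}(x)\setminus(B_{\delta_{i+1}}(x)\cup\NN)$ using \eqref{eq1:asmp} and the density-$1$ property of $X\setminus\NN$, and then that $\phi_{\delta_i}$ sends it back $\eta(\delta_i)$-close to $v$. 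A minor technical point is that the constant $\eta(\delta_i)$ must be handled uniformly over the dyadic index $i$; since $\delta_i=2^{-i}\delta\le\delta$ and $\eta$ is increasing with $\eta(0^+)=0$, one has $\eta(\delta_i)\le\eta(\delta)$ for all $i$, which is exactly what makes the $o(1)$ estimates uniform. Everything else is a routine application of Lemma \ref{lemma4} and the bookkeeping already set up in the proof of Proposition \ref{prop2}.
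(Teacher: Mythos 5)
Your treatment of $\mathcal M^b_i$ (the forward inclusion from \eqref{eq1:asmp} plus the coarea/homogeneity bound on thin annular shells around $S^C_{1/2}$ and $S^C_1$) is fine and is essentially what the paper does. The gap is in your step (1) for $\mathcal M^c_i$: you claim that every point of $B^C_1\setminus B^C_{1/2}$ at distance at least $\eta(\delta_i)$ from $S^C_{1/2}\cup S^C_1$ actually \emph{lies in} the image $\phi_{\delta_i}\big(B_{\delta_i}(x)\setminus B_{\delta_{i+1}}(x)\big)$. Property ii) of an $\epsilon$-isometry only says that such a point is within $\epsilon$ of the image; a Gromov--Hausdorff approximation need not be surjective, and its image may well be an $\epsilon$-net of $\mm_C$-measure zero, in which case $\mathcal M^c_i$ would be essentially the whole annulus. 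Your proposed fix via the companion map from $\mathfrak C$ back to $X$ hits the same wall: composing back with $\phi_{\delta_i}$ returns a point $\eta(\delta_i)$-close to $v$, not $v$ itself, so you still do not obtain $v$ in the image. No purely metric argument can close this; one must use the measure condition \eqref{eq2:asmp}. The paper's route is to lower-bound the $\mm_C$-measure of the image rather than to identify it pointwise: by Assumption \ref{assumption1}-4), $\mm\big(B_{\delta_i}(x)\setminus(B_{\delta_{i+1}}(x)\cup\NN)\big)=(1+o(1))\,\mm(B_{\delta_i}(x))\,\mm_C\big(\phi_{\delta_i}(\cdots)\big)$, while \eqref{eq0:asmp} and Lemma \ref{lemma4} give $\mm\big(B_{\delta_i}(x)\setminus(B_{\delta_{i+1}}(x)\cup\NN)\big)\ge(1+o(1))\,\theta(x)(\delta_i^N-\delta_{i+1}^N)-\eta(\delta)\delta_i^N$; hence $\mm_C\big(\phi_{\delta_i}(\cdots)\big)\ge(1+o(1))(1-2^{-N})=(1+o(1))\,\mm_C\big(B^C_1\setminus B^C_{1/2}\big)$. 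Since by the forward inclusion the image sits inside $B^C_{1+o(1)}\setminus B^C_{(1+o(1))/2}$, whose measure is also $(1+o(1))(1-2^{-N})$ by homogeneity, the uncovered portion $\mathcal M^c_i$ must have measure $o(1)$.

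A secondary problem is your step (4): \eqref{eq2:asmp} compares the pushforward measure with $\mm_C$ only on $\phi_\delta\big(B_\delta(x)\setminus\NN\big)$, and $\phi_\delta$ is neither injective nor measure-controlled on $\NN$, so $\mm(\NN\cap B_{\delta_i}(x))=o(\delta_i^N)$ does not by itself bound $\mm_C\big(\phi_{\delta_i}(\NN\cap B_{\delta_i}(x))\big)$ from above. This term never arises in the paper's argument because the bad set is removed on the $X$-side (via \eqref{eq0:asmp}) before any pushforward is taken.
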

\begin{proof}
Firstly,  for $\delta$ small enough we know
\begin{equation}\label{eq1:sv}
\phi_{\delta_i}\Big( B_{\delta_i}(x)\setminus  \big( B_{\delta_{i+1}}(x) \cup \NN\big) \Big)\subset \big(B^C_{1+o(1)}(x)\setminus  B^C_{{(1+o(1))}\frac{1}2} (x) \big)
\end{equation}
So
\begin{eqnarray*}
{\mathcal M}^b_{i}(\delta) &\subset&   \big(B^C_{1+o(1)}(x)\setminus  B^C_{{(1+o(1))}\frac{1}2} (x) \big) \setminus   \big(B^C_1(x)\setminus  B^C_{\frac 12} (x) \big)\\
&\subset&   \big(B^C_{1+o(1)}(x)\Delta  B^C_1(x) \big) \cup   \big(B^C_{{(1+o(1))}\frac{1}2} (x)\Delta  B^C_{\frac 12} (x) \big)
\end{eqnarray*}
By homogeneity of the tangent space $\mathfrak C$, we know $$\mm_C({\mathcal M}^b_{i})=\mm_C \big(B^C_1(x)\setminus  B^C_{\frac{1}2} (x) \big) o(1).$$

Secondly, to prove $\mm_C({\mathcal M}^c_{i})=o(1)$,  by \eqref{eq1:sv} we just need to show that
\[
\mm_C\left (\phi_{\delta_i}\Big( B_{\delta_i}(x)\setminus  \big( B_{\delta_{i+1}}(x) \cup \NN\big) \Big) \right)\geq \big(1+o(1)\big)\mm_C \big(B^C_1(x)\setminus  B^C_{\frac{1}2} (x) \big).
\]

By Assumptions  \ref{assumption1}-3) and 4) we have
\begin{eqnarray*}
&&\mm\Big(B_{\delta_i}(x)\setminus  \big( B_{\delta_{i+1}}(x) \cup \NN\big)\Big)\\
&=&\big(1+o(1)\big) \mm\Big(B_{\delta_i}(x)\Big) \mm_C\left (\phi_{\delta_i}\Big( B_{\delta_i}(x)\setminus  \big( B_{\delta_{i+1}}(x) \cup \NN\big) \Big) \right)\\
&\leq&\big(1+o(1)\big) \theta^+(x)\delta_i^N \mm_C\left (\phi_{\delta_i}\Big( B_{\delta_i}(x)\setminus  \big( B_{\delta_{i+1}}(x) \cup \NN\big) \Big) \right)
\end{eqnarray*}
and
\begin{eqnarray*}
&&\mm\Big(B_{\delta_i}(x)\setminus  \big( B_{\delta_{i+1}}(x) \cup \NN\big)\Big)+\eta(\delta)\delta_i^N\\
&\geq&\mm\Big(B_{\delta_i}(x)\setminus  B_{\delta_{i+1}}(x) \Big)\\
&=&  \Big(\mm\big(B_{\delta_i}(x)\big)-\mm\big(B_{\delta_{i+1}}(x)\big) \Big )\\
&\geq & \big(1+o(1)\big) \Big(\theta^-(x)\delta_i^N -\theta^+(x)\delta_{i+1}^N  \Big ).
\end{eqnarray*}

Combining with Lemma \ref{lemma4} we obtain
\[
 \mm_C\left (\phi_{\delta_i}\Big( B_{\delta_i}(x)\setminus  \big( B_{\delta_{i+1}}(x) \cup \NN\big) \Big) \right)\geq  \big(1+o(1)\big) \Big(1-\big(\frac{1}2\big)^N  \Big )+o(1)
\]
for $\mm$-a.e. $x$, which is the thesis.

\end{proof}
\bigskip

\begin{lemma}\label{lemma6}
 Given $\epsilon>0$.  For $\delta$ small enough, it holds the following uniform estimate (w.r.t. $n$):
  \[
\lmt{m}{\infty}    \mathop{\sum}_{i=1}^{m}\big(I_b(i, \delta, m, n)+I_c(i, \delta, m, n)\big)<\epsilon.
\]
\end{lemma}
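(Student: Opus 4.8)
The plan is to estimate the defining $\mm_C$-integrals of $I_b$ and $I_c$ crudely, using only that $u_{0,x}$ is Lipschitz and that the ``mismatch'' sets $\mathcal M^b_i(\delta),\mathcal M^c_i(\delta)$ of Lemma \ref{lemma5} have small $\mm_C$-measure. I fix a point $x$ at which the conclusions of Lemmas \ref{lemma4}, \ref{lemma5} and \ref{lemma:rm} hold; thus $\theta(x)<\infty$ and $u_{0,x}$ is the (unique) uniform limit of the rescalings $u_{r,x}$, which is $\Lip(u)$-Lipschitz and satisfies $u_{0,x}(x_0)=0$. Recall from the proof of Proposition \ref{prop2} that, writing $\delta_i=2^{-i}\delta$ and since $\mathcal M^b_i(\delta)$ and $\mathcal M^c_i(\delta)$ are disjoint,
\[
I_b(i,\delta,n)+I_c(i,\delta,n)\ \le\ 2\,\delta_i^{p}\,\mm\big(B_{\delta_i}(x)\big)\int_{\mathcal M^b_i(\delta)\cup\mathcal M^c_i(\delta)}\big|u_{0,x}(v)\big|^{p}\,\frac{\tilde\rho_n\big(\delta_i\,\d_C(x_0,v)(1\pm\eps)\big)}{\big(\delta_i\,\d_C(x_0,v)(1\pm\eps)\big)^{p}}\,\d\mm_C(v),
\]
the bounded factor $(1\mp\eps)$ having been absorbed into the constant $2$. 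As $\sum_{i=1}^m(I_b+I_c)$ is non-decreasing in $m$, it is enough to bound $\sum_{i\ge1}(I_b+I_c)$ by a quantity tending to $0$ as $\delta\to0$, uniformly in $n$.

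I would invoke three pointwise ingredients. (i) By \eqref{eq1:sv}, once $\delta$ is small $\mathcal M^b_i(\delta)\cup\mathcal M^c_i(\delta)\subset B_{2}^{C}(x_0)\setminus B_{1/4}^{C}(x_0)$ for every $i$; hence on these sets $\d_C(x_0,v)\in[\tfrac14,2]$ and, as $u_{0,x}(x_0)=0$ and $u_{0,x}$ is $\Lip(u)$-Lipschitz, $|u_{0,x}(v)|^{p}\le 2^{p}\Lip(u)^{p}$. (ii) Consequently, using that $\tilde\rho_n$ is non-increasing and that for $\eps<\tfrac12$ the argument of $\tilde\rho_n$ above is $\ge\delta_i/8=\delta_{i+3}$,
\[
\frac{\tilde\rho_n\big(\delta_i\,\d_C(x_0,v)(1\pm\eps)\big)}{\big(\delta_i\,\d_C(x_0,v)(1\pm\eps)\big)^{p}}\ \le\ \frac{8^{p}}{\delta_i^{p}}\,\tilde\rho_n(\delta_{i+3})\qquad\text{on }\mathcal M^b_i(\delta)\cup\mathcal M^c_i(\delta).
\]
(iii) By Lemma \ref{lemma5}, $\mm_C\big(\mathcal M^b_i(\delta)\big)+\mm_C\big(\mathcal M^c_i(\delta)\big)\lesssim\eta(\delta)$ \emph{uniformly in }$i$: the Gromov--Hausdorff and measure-approximation errors used in that proof at scale $\delta_i\le\delta$ are at most $\eta(\delta_i)\le\eta(\delta)$ ($\eta$ being increasing), and $\mm(B_r(x))/r^{N}$ differs from $\theta(x)$ by at most a prescribed amount for all $r\le\delta$ with $\delta$ small (Lemma \ref{lemma4}).

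Combining (i)--(iii) with $\mm(B_{\delta_i}(x))\le 2\theta(x)\delta_i^{N}$ (Lemma \ref{lemma4}, $\delta$ small), the prefactor $\delta_i^{p}$ cancels the $\delta_i^{-p}$ from the weight, and for $\delta$ small and every $n$
\[
I_b(i,\delta,n)+I_c(i,\delta,n)\ \le\ C(N,p)\,\theta(x)\,\Lip(u)^{p}\,\eta(\delta)\,\delta_i^{N}\,\tilde\rho_n(\delta_{i+3}).
\]
To sum the dyadic series I use the standard comparison $\delta_j^{N}\tilde\rho_n(\delta_j)\le\frac{N}{1-2^{-N}}\int_{\delta_{j+1}}^{\delta_j}r^{N-1}\tilde\rho_n(r)\,\d r$ (monotonicity of $\tilde\rho_n$); since $\delta_i^{N}=8^{N}\delta_{i+3}^{N}$,
\[
\sum_{i\ge1}\delta_i^{N}\tilde\rho_n(\delta_{i+3})\ \le\ 8^{N}\sum_{j\ge0}\delta_j^{N}\tilde\rho_n(\delta_j)\ \le\ \frac{8^{N}N}{1-2^{-N}}\int_{0}^{\delta}r^{N-1}\tilde\rho_n(r)\,\d r.
\]
By Assumption \ref{assumption2}-3') the last integral, for each fixed $\delta$, defines a sequence in $n$ that converges (to $N^{-1}$); as it is also non-decreasing in $\delta$, fixing a small $\delta_*$ and setting $M:=\sup_n\int_0^{\delta_*}r^{N-1}\tilde\rho_n(r)\,\d r<\infty$ makes it $\le M$ for all $n$ and all $\delta\le\delta_*$. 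Hence $\sum_{i\ge1}(I_b+I_c)\le C'(N,p)\,\theta(x)\,\Lip(u)^{p}\,M\,\eta(\delta)$ for all $\delta\le\delta_*$ and all $n$, and since $\theta(x),\Lip(u)$ are fixed and finite while $\eta(\delta)\to0$, this is $<\eps$ once $\delta$ is small enough; letting $m\to\infty$ (each $m$-truncated quantity satisfying the same bound) finishes the proof.

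The hard part will be the \emph{simultaneous} uniformity in the dyadic index $i$ of the three error estimates — the $\mm_C$-measure bound of Lemma \ref{lemma5}, the density comparison of Lemma \ref{lemma4}, and the blow-up quality — since it is precisely this that lets the dyadic series be summed with a constant independent of $n$. One must check that each holds at every scale $\delta_i\le\delta$ with error controlled by the single quantity $\eta(\delta)$ together with the rate of convergence of $\mm(B_r(x))/r^{N}$ to $\theta(x)$, which is uniform for $r\le\delta$. The truncation parameter $m$ plays no role beyond keeping the partial sums finite before the limit.
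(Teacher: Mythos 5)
Your proof is correct and follows essentially the same route as the paper, which merely sketches this step by reducing to Lemma \ref{lemma3} after a change of variable: both arguments rest on the containment of the mismatch sets in an annulus bounded away from $x_0$ (so $|u_{0,x}|^p$ is bounded there), the uniform-in-$i$ measure bound $\mm_C(\mathcal M^b_i)+\mm_C(\mathcal M^c_i)\lesssim\eta(\delta)$ from Lemma \ref{lemma5}, the monotonicity of $\tilde\rho_n$, and a dyadic summation controlled by a quantity that is bounded uniformly in $n$ by Assumption \ref{assumption2}. Your write-up is in fact more explicit than the paper's one-line reduction, and your closing remark correctly identifies the uniformity in the dyadic index as the point requiring care.
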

\begin{proof}

By  Lemma \ref{lemma5}  we know ${\mathcal M}^b_{i}+{\mathcal M}^c_{i} \subset B^C_{\frac 32}(x)\setminus  B^C_{\frac{1}2} (x)$ and  $\mm_C({\mathcal M}^b_{i})+\mm_C({\mathcal M}^c_{i})= o(1)$, so $\mm_C \big(D_{\delta_i} ({\mathcal M}^b_{i}+{\mathcal M}^c_{i})\big)=o(\delta^N)$.
Then the lemma can be proved in a similar way as Lemma \ref{lemma3},  with the help of a change of variable.

\end{proof}

\bigskip

By  Proposition \ref{prop2}  and the  density of Lipschitz functions in $W^{1, p}$ (cf. Proposition \ref{prop:density}), we immiediately obtain the following lemma concerning the well-posedness of $\| \nabla u \|^p_{K_{p, \mathfrak C} }$ for any  $u \in W^{1, p}(X, \d, \theta \mm)$, where $\theta$ is the density function given in Lemma \ref{lemma4}.
\begin{lemma}\label{lemma:norm}
The semi-norm $\| \nabla \cdot \|^p_{K_{p, \mathfrak C} }$ is well-defined on $W^{1, p}(X, \d, \theta \mm)$, i.e. for any   $u\in W^{1, p}(X, \d, \theta \mm)$, $ \| \nabla u \|^p_{K_{p, \mathfrak C} }$ can be uniquely defined by
\[
\| \nabla u \|^p_{K_{p, \mathfrak C} }:=\lmt{k}{\infty} \| \nabla u_k \|^p_{K_{p, \mathfrak C} }
\]
for any sequence $(u_k)$ of Lipschitz functions converging to $u$ in $W^{1,  p}(X, \d, \theta \mm)$. In other words,  the value of $\lmt{k}{\infty} \| \nabla u_k \|^p_{K_{p, \mathfrak C} }$ is independent of the choice of $(u_k)$.
\end{lemma}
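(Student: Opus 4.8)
The plan is to prove the lemma by the standard ``bounded seminorm on a dense subspace extends uniquely'' principle. The two inputs are Proposition \ref{prop2}, which gives both the existence of $\lim_n\E_n(w)=\|\nabla w\|_{K_{p,\mathfrak C}}^p$ on $\Lip_c(X)$ and the bound $\|\nabla w\|_{K_{p,\mathfrak C}}^p\le\int\theta|\nabla w|^p\dm$, and the density of $\Lip_c(X)$ in $W^{1,p}(X,\d,\theta\mm)$.

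First I would check that for each fixed $n$ the map $v\mapsto\E_n(v)^{1/p}$ is a seminorm on $\Lip_c(X)$: $\E_n(v)$ is the $p$-th power of the $L^p$-norm of the difference quotient $(x,y)\mapsto\bigl(v(x)-v(y)\bigr)/\d(x,y)$ --- which is linear in $v$ --- against the finite measure $\rho_n(x,y)\dm(x)\dm(y)$ (finiteness by Assumption \ref{assumption2}-1')), so Minkowski's inequality applies. Letting $n\to\infty$ and using Proposition \ref{prop2} shows $v\mapsto\|\nabla v\|_{K_{p,\mathfrak C}}$ is a seminorm on $\Lip_c(X)$; in particular $\bigl|\|\nabla u\|_{K_{p,\mathfrak C}}-\|\nabla v\|_{K_{p,\mathfrak C}}\bigr|\le\|\nabla(u-v)\|_{K_{p,\mathfrak C}}$. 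Combining this with $\|\nabla w\|_{K_{p,\mathfrak C}}^p\le\int\theta|\nabla w|^p\dm\le\int\lip{w}^p\,\theta\dm$ (from \eqref{eq0:coro1} and $|\nabla w|\le\lip{w}$), and identifying $\lip{w}$ with the minimal weak upper gradient $|\D w|_{\theta\mm}$ of $w$ in $(X,\d,\theta\mm)$ via Cheeger's theorem --- legitimate because $\theta\mm$ is again doubling and supports a local Poincaré inequality, see below --- one obtains
\[
\bigl|\|\nabla u\|_{K_{p,\mathfrak C}}-\|\nabla v\|_{K_{p,\mathfrak C}}\bigr|\le\|\nabla(u-v)\|_{K_{p,\mathfrak C}}\le\bigl\||\D(u-v)|_{\theta\mm}\bigr\|_{L^p(X,\theta\mm)}\le\|u-v\|_{W^{1,p}(X,\d,\theta\mm)}
\]
for $u,v\in\Lip_c(X)$, i.e. $u\mapsto\|\nabla u\|_{K_{p,\mathfrak C}}$ is $1$-Lipschitz on $\Lip_c(X)$ for the $W^{1,p}(X,\d,\theta\mm)$-seminorm.

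The conclusion then follows formally. By Proposition \ref{prop:density} applied to $(X,\d,\theta\mm)$ --- upgraded to strong $W^{1,p}$-convergence as in the remark after it, since $W^{1,p}$ of a doubling PI space is reflexive --- $\Lip_c(X)$ is dense in $W^{1,p}(X,\d,\theta\mm)$. Hence $u\mapsto\|\nabla u\|_{K_{p,\mathfrak C}}$ extends uniquely to a ($1$-Lipschitz, whence continuous) function on all of $W^{1,p}(X,\d,\theta\mm)$; explicitly, for $u\in W^{1,p}(X,\d,\theta\mm)$ and any sequence $(u_k)\subset\Lip_c(X)$ with $u_k\to u$ in $W^{1,p}(X,\d,\theta\mm)$, the sequence $(\|\nabla u_k\|_{K_{p,\mathfrak C}})_k$ is Cauchy in $\R$ and converges, and interleaving two such sequences shows the limit is independent of the choice; by continuity of the extension the same value is obtained along any Lipschitz (rather than merely $\Lip_c$) approximating sequence. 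Raising to the $p$-th power gives the asserted well-defined quantity $\|\nabla u\|_{K_{p,\mathfrak C}}^p$.

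The main obstacle --- the only step that is not soft --- is the assertion used above that $(X,\d,\theta\mm)$ is again doubling and supports a local Poincaré inequality, which is what licenses Cheeger's identity $\lip{w}=|\D w|_{\theta\mm}$ and the reflexivity of the corresponding Sobolev space. I would handle this not in the abstract but case by case, using explicit information on the density $\theta$ of Lemma \ref{lemma4}: in every setting of Section 4 (Euclidean spaces, weighted Riemannian and equiregular sub-Riemannian manifolds with continuous weights bounded away from zero, non-collapsed $\rcdkn$ spaces) $\theta$ is locally comparable to a positive constant, so $\theta\mm$ is locally comparable to $\mm$ and inherits the required structure from Assumption \ref{assumption1}. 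Everything else --- the seminorm property of $\E_n^{1/p}$, the extension by density, and the independence of the approximating sequence --- is routine.
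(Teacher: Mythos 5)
Your proposal is correct, and its skeleton --- Minkowski's inequality makes $\E_n^{1/p}$ a seminorm on $\Lip_c(X)$, Proposition \ref{prop2} passes this to the limit and supplies the upper bound $\|\nabla w\|_{K_{p,\mathfrak C}}^p\le\int\theta\,\lip{w}^p\dm$, and the standard Cauchy/interleaving argument then extends the seminorm by density --- is exactly what the paper's one-line justification (``by Proposition \ref{prop2} and the density of Lipschitz functions'') must mean; the paper supplies no further proof of the lemma.

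The one step where you genuinely diverge from the paper's surrounding material is the quantitative continuity estimate, i.e.\ the passage from $\int\theta\,\lip{(u_k-u_j)}^p\dm$ to the $W^{1,p}(X,\d,\theta\mm)$-norm of $u_k-u_j$. You do this via Cheeger's identity $\lip{w}=|\D w|$ for Lipschitz $w$, which forces you to assume that $(X,\d,\theta\mm)$ is itself doubling and supports a Poincar\'e inequality, and you propose to verify this case by case. The paper, in the analogous step of the proof of Theorem \ref{th1}, instead uses the Hajlasz--Sobolev characterization (Proposition \ref{prop:density2}) to bound $\E_n(u_{k_0}-u)$ uniformly in $n$ by $C\|\theta^{-1}\|_{L^\infty}\,\|\D(u-u_{k_0})\|^p_{L^p(X,\theta\mm)}$ \emph{before} letting $n\to\infty$; applied to $u_k-u_j\in\Lip_c(X)$ this yields the same Cauchy property without invoking $\lip{}=|\D\cdot|$, at the price of the extra hypothesis $\theta^{-1}\in L^\infty$ (present in Theorem \ref{th1} but absent from the lemma) and of applying Proposition \ref{prop:density2} to $(X,\d,\theta\mm)$ --- which again presupposes doubling and Poincar\'e for the weighted measure. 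So the assumption you flag as ``the only step that is not soft'' is not an artifact of your route: the paper needs it too (it also applies Proposition \ref{prop:density} to $(X,\d,\theta\mm)$ to produce the approximating sequence), and neither argument derives it from Assumption \ref{assumption1}, which concerns $\mm$ rather than $\theta\mm$; your explicit case-by-case resolution is a fair way to close this. One smaller point: the lemma quantifies over Lipschitz, not compactly supported Lipschitz, approximants, while Proposition \ref{prop2} and hence your $\E_n$-based seminorm argument only cover $\Lip_c(X)$; your appeal to ``continuity of the extension'' for general Lipschitz $u_k$ tacitly uses that the triangle inequality and the bound $\|\nabla w\|^p_{K_{p,\mathfrak C}}\le\int\theta\,\lip{w}^p\dm$ persist for such $w$, which is cleanest to see directly from the intrinsic formula for $|\nabla w|_{K_{p,\mathfrak C}}$ (additivity of the blow-ups $w_{0,x}$ and the bound $|w_{0,x}|\le\lip{w}(x)$ on $S_1^{C}$) rather than through $\E_n$, since $\E_n$ need not be finite for non-compactly-supported Lipschitz functions.
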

\bigskip
 
 \begin{theorem}\label{th1}
Let $u\in  W^{1, p}(X,\d, \theta\mm)$. Assume that  $\theta^{-1} \in L^\infty$. Then we have the same  generalized Bourgain-Brezis-Mironescu's  formula  \eqref{eq0:coro1} as 
in Proposition \ref{prop2}.
 \end{theorem}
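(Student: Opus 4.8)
The plan is to deduce Theorem \ref{th1} from the Lipschitz case, Proposition \ref{prop2}, by a density and diagonal argument, the only genuinely new input being a bound for $\E_n$ by a Sobolev norm which is \emph{uniform in $n$}. Since $u\in W^{1,p}(X,\d,\theta\mm)$, choose, as in Lemma \ref{lemma:norm}, a sequence $(u_k)\subset\Lip_c(X,\d)$ converging to $u$ in $W^{1,p}(X,\d,\theta\mm)$ and such that $\|\nabla u_k\|^p_{K_{p,\mathfrak C}}\to\|\nabla u\|^p_{K_{p,\mathfrak C}}$. For each fixed $k$, Proposition \ref{prop2} already gives $\E_n(u_k)\to\|\nabla u_k\|^p_{K_{p,\mathfrak C}}$ as $n\to\infty$, so it suffices to control $\E_n(u)-\E_n(u_k)$ uniformly in $n$ and then let $k\to\infty$.

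Two elementary observations make this work. First, for every fixed $n$ the quantity $\E_n(f)^{1/p}$ is the $L^p$-norm of $(x,y)\mapsto |f(x)-f(y)|/\d(x,y)$ with respect to the measure $\rho_n(x,y)\,\d\mm(x)\,\d\mm(y)$ on $X\times X$, so Minkowski's inequality yields $\big|\E_n(f)^{1/p}-\E_n(g)^{1/p}\big|\le\E_n(f-g)^{1/p}$. Second, applying the Hajlasz-type characterization of Proposition \ref{prop:density2} to $h:=u-u_k$ (which lies in $W^{1,p}\ms$ because $\theta^{-1}\in L^\infty$) we obtain $g\in L^p(X,\mm)$ with $|h(x)-h(y)|\le\d(x,y)\big(g(x)+g(y)\big)$ for $\mm$-a.e.\ $x,y$ and $\|g\|_{L^p(\mm)}\le C\|\D h\|_{L^p(\mm)}$; inserting this into the definition of $\E_n$ and using Assumption \ref{assumption2}-1') gives
\begin{equation*}
\E_n(h)\le 2^{p-1}\int_X\int_X\big(g(x)^p+g(y)^p\big)\rho_n(x,y)\,\d\mm(x)\,\d\mm(y)\le 2^p c_1 C^p\,\|\D h\|_{L^p(\mm)}^p .
\end{equation*}
Since $\theta^{-1}\in L^\infty$ one has $\|\D h\|_{L^p(\mm)}^p\le\|\theta^{-1}\|_{L^\infty}\,\|\D h\|_{L^p(\theta\mm)}^p$, so $\E_n(u-u_k)\le 2^p c_1 C^p\|\theta^{-1}\|_{L^\infty}\,\|\D(u-u_k)\|_{L^p(\theta\mm)}^p\to0$ as $k\to\infty$, uniformly in $n$. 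This is precisely where the hypothesis $\theta^{-1}\in L^\infty$ is used: it makes the weighted Sobolev norm, in which the approximation is performed, stronger than the unweighted one, which controls $\E_n$.

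The argument then closes routinely: fix $\varepsilon>0$, pick $k$ with $\big|\|\nabla u_k\|_{K_{p,\mathfrak C}}-\|\nabla u\|_{K_{p,\mathfrak C}}\big|<\varepsilon$ and $\sup_n\E_n(u-u_k)^{1/p}<\varepsilon$, then use Proposition \ref{prop2} to find $n_0$ with $\big|\E_n(u_k)^{1/p}-\|\nabla u_k\|_{K_{p,\mathfrak C}}\big|<\varepsilon$ for $n\ge n_0$; chaining these through the triangle inequality for $\E_n^{1/p}$ gives $\big|\E_n(u)^{1/p}-\|\nabla u\|_{K_{p,\mathfrak C}}\big|<3\varepsilon$ for $n\ge n_0$, hence $\lim_{n\to\infty}\E_n(u)=\|\nabla u\|^p_{K_{p,\mathfrak C}}$, and the inequality $\|\nabla u\|^p_{K_{p,\mathfrak C}}\le\int_X\theta|\nabla u|^p\,\d\mm$ passes to the limit from Proposition \ref{prop2} using $u_k\to u$ in $W^{1,p}(X,\d,\theta\mm)$. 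The main obstacle is exactly the uniform-in-$n$ smallness of $\E_n(u-u_k)$: the pointwise estimate of Proposition \ref{prop1} is stated only for $\Lip_c$ functions, so one really needs the maximal-function/Hajlasz bound of Proposition \ref{prop:density2} to control $\E_n$ by a Sobolev norm with a constant independent of $n$, and one must match the weighted and unweighted norms, both being handled by $\theta^{-1}\in L^\infty$. A minor additional technicality is to ensure the approximating sequence can be taken in $\Lip_c(X,\d)$, as required by Proposition \ref{prop2}; this is part of the content underlying Lemma \ref{lemma:norm}.
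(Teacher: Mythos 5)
Your proposal is correct and follows essentially the same route as the paper: approximation by $\Lip_c$ functions in $W^{1,p}(X,\d,\theta\mm)$ combined with Lemma \ref{lemma:norm}, the Hajlasz--Sobolev bound of Proposition \ref{prop:density2} to get an $n$-uniform estimate $\E_n(u-u_k)\lesssim c_1 C^p\|\theta^{-1}\|_{L^\infty}\|\D(u-u_k)\|^p_{L^p(\theta\mm)}$, and a triangle-inequality chaining via Proposition \ref{prop2}. The only cosmetic difference is that the paper applies the Hajlasz estimate directly in the weighted space and inserts $\|\theta^{-1}\|_{L^\infty}$ when integrating $g^p$ against $\rho_n\,\d\mm\,\d\mm$, whereas you insert it when comparing $\|\D h\|_{L^p(\mm)}$ with $\|\D h\|_{L^p(\theta\mm)}$; these are equivalent.
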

 
 \begin{proof}
 Let $(u_k)$ be a sequence of   Lipschitz functions with compact support such that $u_k \to u$ strongly in $W^{1,p}{(X,\d,  \theta\mm)}$. For any $\epsilon\in (0, 1)$, there is $k_0\in \N$ such that 
 \begin{equation}\label{eq:th1}
 \|u-u_{k_0}\|_{W^{1,p}(X,\d, \theta\mm)}<\epsilon.
\end{equation}
By Lemma \ref{lemma:norm} we can also assume
\begin{equation}\label{eq2:th1}
\left| \| \nabla u_{k_0}\|_{K_{p, \mathfrak C} }- \| \nabla u\|_{K_{p, \mathfrak C} }\right| <\epsilon
\end{equation}
 
By Proposition \ref{prop:density2}  (Hajlasz-Sobolev space), there is $g\in L^p(X,\theta\mm)$ such that 
\[
\left|\big (u_{k_0}(x)-u(x)\big)-\big (u_{k_0}(y)-u(y)\big)\right| \leq \d(x, y) \big(g(x)+g(y) \big)
\]
and there is a constant $C>0$ which is independent of $u$ and $u_{k_0}$,  such that
\[
 \|g\|_{L^p(X,\theta\mm)} \leq C \|\D (u-u_{k_0})\|_{L^p}(X,\theta\mm).
\]
Hence
 \begin{eqnarray*}
&& \left|\E^{\frac1p}_n(u_{k_0})- \E^{\frac1p}_n(u)\right| ^p
\leq  \E_n(u_{k_0}-u)\\
&\leq & \int_X \int_X \big(g(x)+g(y)\big)^p\rho_n(x, y)\,\d \mm(x)\d \mm(y)\\
&\leq&2^p c_1 \| \theta^{-1}\|_{L^\infty}\|g\|^p_{L^p(X,\theta\mm)} \leq 2^p c_1C^p\| \theta^{-1}\|_{L^\infty} \|\D (u-u_{k_0})\|^p_{L^p(X,\theta\mm)}\\
&\leq& 2^p c_1C^p\| \theta^{-1}\|_{L^\infty} \|u-u_{k_0}\|^p_{W^{1,p}(X,\d, \theta\mm)}\\
&<& 2^p c_1C^p\| \theta^{-1}\|_{L^\infty}  \epsilon^p.
 \end{eqnarray*}

  By  Proposition \ref{prop1} and \eqref{eq:th1}, there is $n_0\in \N$ such that for any $n >n_0$, it holds
\begin{equation}\label{eq1:th1}
\left|\E^{\frac1p}_n(u_{k_0})- \| \nabla u_{k_0}\|_{K_{p, \mathfrak C} }\right| <\epsilon.
\end{equation}

Combining  the estimates above, we obtain
\begin{eqnarray*}
&&\left|\E^{\frac1p}_n(u)- \| \nabla u\|_{K_{p, \mathfrak C} }\right|\\
&\leq&\left|\E^\frac1p_n(u)-\E^\frac1p_n(u_{k_0})\right|+ \left|\E^{\frac1p}_n(u_{k_0})- \| \nabla u_{k_0}\|_{K_{p, \mathfrak C} }\right| +\left| \| \nabla u_{k_0}\|_{K_{p, \mathfrak C} }- \| \nabla u\|_{K_{p, \mathfrak C} }\right|\\
&\leq&2^p c_1C^p\| \theta^{-1}\|_{L^\infty} \epsilon^p+2\epsilon
\end{eqnarray*}
 for any $n >n_0$, which is the thesis.


 \end{proof}

 

 \section{Examples and Applications}
 The first proposition extends a result of M.Ludwig \cite{Ludwig14} concerning finite dimensional Banach spaces with general mollifiers.  We remark that the proof in \cite{Ludwig14} is based on Blaschke–Petkantschin formula, which  works only  for the mollifiers $\rho_n=\frac{1/n}{\|x-y\|^{N-p/n}}$.
 
 \begin{proposition}[Anisotropic spaces]\label{prop:banach}
Let $\mathfrak C=(\R^N, \|\cdot\|, \mathcal L^N)$ be a  $N$-dimensional Banach space equipped with the Lebesgue measure $\mathcal L^N$ for some  $N\in \N$.  Then
\begin{equation}\label{eq1:coro1}
 \mathop{\lim}_{n \to \infty} \int_{\R^N} \int_{\R^N} \frac{|f(x)-f(y)|^p}{\|x- y\|^p} \rho_n(x, y)\,\d x\, \d y= \| \nabla f\|^p_{K_{p, \mathfrak C} }
\end{equation}
where
\[
\| \nabla f\|^p_{K_{p, \mathfrak C} }= C_0\int_{\R^N} \int_{S_1^{\mathfrak C} } |\nabla f \cdot v |^p \,\d\mathcal H_{\|\cdot\|}^{N-1}(v)\,\d x
\]
where $B^{\mathfrak C}_1$ is the unit ball in $\mathfrak C$ centred at $0$ and $S_1^{\mathfrak C} $ is its boundary, $\mathcal H_{\|\cdot\|}^{N-1}$ is the boundary measure (Minkowski content)  w.r.t. $\|\cdot \|$ and $\mathcal L^N$, and the constant $C_0$ is given by
\[
 C_0=\lmt{n}{\infty}  \int_0^\delta  r^{N-1} \tilde \rho_n(r)\,\d r.
\]
\end{proposition}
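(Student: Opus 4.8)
The plan is to show that the Banach space $\mathfrak C=(\R^N,\|\cdot\|,\mathcal L^N)$ together with all of its rescalings satisfies Assumption \ref{assumption1}, so that Theorem \ref{th1} applies, and then to unwind the abstract formula \eqref{eq0:coro1} into the explicit expression \eqref{eq1:coro1}.

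\emph{Verification of Assumption \ref{assumption1}.} A finite-dimensional normed space is bi-Lipschitz to Euclidean $\R^N$, hence doubling and supporting a $(1,1)$-Poincar\'e inequality, which gives part 1). For every $r>0$ the affine map $y\mapsto(y-x)/r$ is an isometry of $(\R^N,r^{-1}\|\cdot\|)$ onto $(\R^N,\|\cdot\|)$ sending $x$ to $0$, and since $\mathcal L^N(B_r(x))=r^N\mathcal L^N(B_1)$ it pushes the rescaled normalized measure \eqref{eq:normalization} forward to $\mathcal L^N/\mathcal L^N(B_1)$; hence at every point the tangent cone is unique and equals the fixed pointed space $\mathfrak C=\big(\R^N,\|\cdot\|,\mathcal L^N/\mathcal L^N(B_1),0\big)$, so 2a) holds with distinguished constant the dimension $N$. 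Taking these affine maps as the Gromov--Hausdorff approximations $\phi_\delta$ one may take $\NN=\emptyset$ and $\eta\equiv0$, so conditions 3) and 4) hold trivially, and the self-similarity 5) is realized by the dilations $D_r\colon y\mapsto ry$. Finally, with the single global chart $\varphi=\Id$ the condition \eqref{cheg} is the usual Fr\'echet differentiability, so by the classical Rademacher theorem a Lipschitz $f$ is differentiable $\mathcal L^N$-a.e.; at a point $x$ of differentiability the rescalings $f_{r,x}$ converge to the map $v\mapsto\nabla f(x)\cdot v$ on $\mathfrak C$ (a directional derivative in the blow-up variable $v=(y-x)/r$), and any two such limits differ by precomposition with an isometry of $\mathfrak C$, which is 2b). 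In particular $f_{0,x}(v)=\nabla f(x)\cdot v$.

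\emph{Application of Theorem \ref{th1} and identification of the constant.} Here $\theta(x)=\lim_{\delta\to0}\mathcal L^N(B_\delta(x))/\delta^N=\mathcal L^N(B_1)$ is a positive constant, so $\theta^{-1}\in L^\infty$ and $W^{1,p}(\R^N,\|\cdot\|,\theta\mathcal L^N)$ coincides, as a set and up to equivalence of norms, with $W^{1,p}(\R^N)$. Theorem \ref{th1} then gives
\[
\lim_{n\to\infty}\int_{\R^N}\int_{\R^N}\frac{|f(x)-f(y)|^p}{\|x-y\|^p}\rho_n(x,y)\,\d x\,\d y=\int_{\R^N}\theta(x)\fint_{S_1^{\mathfrak C}}|f_{0,x}(v)|^p\,\d\mm_C^+(v)\,\d x .
\]
Since $\mm_C=\mathcal L^N/\mathcal L^N(B_1)$ is a constant multiple of $\mathcal L^N$, the associated Minkowski contents on Borel subsets of $S_1^{\mathfrak C}$ satisfy $\mm_C^+=\mathcal L^N(B_1)^{-1}\,\mathcal H^{N-1}_{\|\cdot\|}$, and $\mathcal H^{N-1}_{\|\cdot\|}(S_1^{\mathfrak C})=\mathcal L^N(B_1)\,\mm_C^+(S_1^{\mathfrak C})=N\,\mathcal L^N(B_1)$ by Lemma \ref{lemma4}. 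Substituting $f_{0,x}(v)=\nabla f(x)\cdot v$ and $\theta(x)=\mathcal L^N(B_1)$,
\[
\theta(x)\fint_{S_1^{\mathfrak C}}|f_{0,x}(v)|^p\,\d\mm_C^+(v)=\frac{\mathcal L^N(B_1)}{\mm_C^+(S_1^{\mathfrak C})}\cdot\frac{1}{\mathcal L^N(B_1)}\int_{S_1^{\mathfrak C}}|\nabla f(x)\cdot v|^p\,\d\mathcal H^{N-1}_{\|\cdot\|}(v)=\frac1N\int_{S_1^{\mathfrak C}}|\nabla f(x)\cdot v|^p\,\d\mathcal H^{N-1}_{\|\cdot\|}(v),
\]
and $\tfrac1N=\big(\mm_C^+(S_1^{\mathfrak C})\big)^{-1}=\lim_{n\to\infty}\int_0^\delta r^{N-1}\tilde\rho_n(r)\,\d r=C_0$ by Assumption \ref{assumption2}-3') and Lemma \ref{lemma4}. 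Integrating over $x\in\R^N$ yields \eqref{eq1:coro1}.

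\emph{Main difficulty.} Conceptually the argument is short; the only delicate points are the identification of the blow-up $f_{0,x}$ with the classical differential $v\mapsto\nabla f(x)\cdot v$ (which rests on the fact that on a normed space Cheeger's differentiable structure is the standard one, and that generalized linear functions there are affine), and the careful tracking of the normalization: the tangent cone must carry $\mathcal L^N/\mathcal L^N(B_1)$ so that its unit ball has volume one, as required by Definition \ref{def:tangent}, whereas \eqref{eq1:coro1} is written with the Minkowski content of $\mathcal L^N$ itself, so the factor $\mathcal L^N(B_1)$ has to be carried through until it cancels against $\theta$ and the normalization $\mm_C^+(S_1^{\mathfrak C})=N$, leaving exactly the constant $C_0$.
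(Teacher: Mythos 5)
Your proof is correct and follows essentially the same route as the paper's: verify Assumption \ref{assumption1} for the normed space (with dilations as the good Gromov--Hausdorff approximations), identify the blow-up $f_{0,x}$ with $v\mapsto\nabla f(x)\cdot v$ via the classical Rademacher theorem and the equivalence of norms, and then apply Theorem \ref{th1}. Your version is in fact more explicit than the paper's terse argument, particularly in tracking the normalization $\mm_C=\mathcal L^N/\mathcal L^N(B_1^{\mathfrak C})$ against the unnormalized Minkowski content $\mathcal H^{N-1}_{\|\cdot\|}$ appearing in the statement.
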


\begin{proof}
 Obviously $\mathfrak C$ is a Cheeger differentiable space,  and its tangent spaces are isometric to the normalized space $\overline{ \mathfrak{C}}:=(\R^N, \|\cdot\|, (\mathcal L^N(B^{\mathfrak C}_1))^{-1}\mathcal L^N)$ .  Let $x\in \R^N$ be  a differentiable point of a Lipschitz function $f$ with respect to both $\| \cdot \|$ and  the Euclidean norm $|\cdot|$. By \cite[Theorem 10.2]{C-D} and the Rademacher's theorem on Euclidean spaces we know the union of  such points have full measure. Let $(\varphi_r)_{r>0}$ be a family of dilations with fix point $x$, i.e. 
 $\varphi_r(y)=x+r(y-x)$.
 It can be seen that   $(\varphi_r)_{r>0}$  is a family of good Gromov-Hausdorff approximation maps from $\mathfrak C$ to the corresponding rescaled spaces.

Assume (up to taking a subsequence) $(f_{r, x})_r$  converges uniformly to a  limit $f_{0, x}$ with respect to $\| \cdot \|$ as $r\to 0$.
By \cite[Theorem 10.2]{C-D},  $f_{0,x}$ is a generalized linear function on $\mathfrak{C}$. Meanwhile, since the norm  $\|\cdot\|$ and the Euclidean norm are equivalent,  $(f_{r, x})_r$  also converges uniformly to $f_{0, x}$  with respect to the Euclidean norm.  By Rademacher's theorem on Euclidean spaces, we know that $f_{0,x}$  can be written in the unique way, as a  linear function $f_{0,x}(v)=\nabla f  \cdot v$ for any $v\in \R^N$.  Then the assertion follows from Theorem \ref{th1}.

\end{proof}

\begin{remark}
In \cite{Ludwig14}, the author studies  $\mathop{\lim}_{s \to 1^-} (1-s)\int_{\R^N} \int_{\R^N} \frac{|f(x)-f(y)|^p}{\|x- y\|^{N+sp}} \,\d x\, \d y$. Applying our proposition  to this special case, we have
\begin{eqnarray*}
&&\mathop{\lim}_{s \to 1^-} (1-s)\int_{\R^N} \int_{\R^N} \frac{|f(x)-f(y)|^p}{\|x- y\|^{N+sp}} \,\d x\, \d y\\
&=&C_0 \int_{\R^N} \int_{S_1^{\mathfrak C} } |\nabla f \cdot v |^p \,\d \mathcal H_{\|\cdot\|}^{N-1}(v)\,\d x
\end{eqnarray*}
where by definition the constant $C_0$ is
\[
\lmt{s}{ 1^-}  (1-s)\int_0^\delta r^{N-1} r^{-N-sp+p}\,\d r=\frac 1p.
\]
Note that 
\begin{eqnarray*}
&&  \int_{B^{\mathfrak C}_1} |\nabla f \cdot v |^p \,\d \mathcal H_{\|\cdot\|}^{N-1}(v)\\
&=& \int_0^1 \int_{S_r^{\mathfrak C} } |\nabla f \cdot v |^p \,\d \mathcal H_{\|\cdot\|}^{N-1}(v)\,\d r\\
\text{By change of variable} &=& \int_0^1 \int_{S_1^{\mathfrak C} }  |\nabla f \cdot  r v |^p \,\d \mathcal H_{\|\cdot\|}^{N-1}(r v)\,\d r\\
&=&   \int_0^1 r^{p+N-1} \Big( \int_{S_1^{\mathfrak C} }   |\nabla f \cdot  v |^p \,\d \mathcal H_{\|\cdot\|}^{N-1}(v)\Big)\,\d r\\
&=& \frac 1{p+N}   \int_{S_1^{\mathfrak C} }   |\nabla f \cdot  v |^p \,\d \mathcal H_{\|\cdot\|}^{N-1}(v).
\end{eqnarray*}

In conclusion, we have
\begin{eqnarray*}
&&\mathop{\lim}_{s \to 1^-} (1-s)\int_{\R^N} \int_{\R^N} \frac{|f(x)-f(y)|^p}{\|x- y\|^{N+sp}} \,\d x\, \d y\\
&=&\frac {p+N}p \int_{\R^N} \int_{B^{\mathfrak C}_1} |\nabla f \cdot v |^p \,\d \mathcal H_{\|\cdot\|}^{N-1}(v)\,\d x
\end{eqnarray*}
where the unit ball $B^{\mathfrak C}_1 $ is a convex body associated with the norm $\| \cdot \|$.

\end{remark}

\bigskip

As a corollary, we get  Bourgain-Brezis-Mironescu's formula on $\R^n$.

 \begin{proposition}[Euclidean spaces]\label{prop:eu}
Let $\mathfrak C=(\R^N, |\cdot|, \mathcal L^N)$ be the $N$-dimensional Euclidean space equipped with the Euclidean distance and the Lebesgue measure, and $(\rho_n)_n$ be a family of mollifiers satisfying Assumption \ref{assumption2}. Then
\begin{equation}\label{eq:prop:eu}
 \mathop{\lim}_{n \to \infty} \int_{\R^N} \int_{\R^N} \frac{|f(x)-f(y)|^p}{|x- y|^p} \rho_n(x, y)\,\d x\, \d y=K_{p, N}  \| \nabla f\|^p_{L^p}
\end{equation}
where
\[
K_{p, N}:= \mathcal L^N(B^N_1)\fint_{S_1^N } |w\cdot v |^p \,\d\mathcal H^{N-1}(v)
\]
is a constant independent of the choice of $w$ in an $N$-dimensional unit sphere $S_1^N$, and  $ \mathcal L^N(B^N_1)=\frac{\pi^{\frac N 2}}{\Gamma(\frac N2+1)}$ is the volume of an $N$-dimensional unit ball.
\end{proposition}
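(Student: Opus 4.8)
The plan is to obtain this statement as a direct specialization of Proposition \ref{prop:banach}, exploiting rotational symmetry to reduce the anisotropic integral to a multiple of $\|\nabla f\|_{L^p}^p$ and then matching constants. Since $(\R^N,|\cdot|,\mathcal L^N)$ is the case $\|\cdot\|=|\cdot|$ of a finite-dimensional Banach space carrying Lebesgue measure, and since here the density is constant, $\theta\equiv\mathcal L^N(B_1^N)$ (so that $\theta^{-1}\in L^\infty$ and $W^{1,p}(\R^N,|\cdot|,\theta\mathcal L^N)=W^{1,p}(\R^N)$ with equivalent norms), Proposition \ref{prop:banach} applies to every $f\in W^{1,p}(\R^N)$ and yields
\[
\lim_{n\to\infty}\int_{\R^N}\int_{\R^N}\frac{|f(x)-f(y)|^p}{|x-y|^p}\rho_n(x,y)\,\d x\,\d y
= C_0\int_{\R^N}\int_{S_1^N}|\nabla f(x)\cdot v|^p\,\d\mathcal H^{N-1}(v)\,\d x,
\]
where $\mathcal H^{N-1}$ is the classical $(N-1)$-dimensional surface measure on $S_1^N$ (which agrees with the Minkowski content of $S_1^N$ with respect to $|\cdot|$ and $\mathcal L^N$) and $C_0=\lim_{n\to\infty}\int_0^\delta r^{N-1}\tilde\rho_n(r)\,\d r$. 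By Assumption \ref{assumption2}-3') this constant equals $\big(\mm^+_C(S_1^C)\big)^{-1}$, and since the tangent cone here is the Euclidean one (so $S_1^C=S_1^N$), Lemma \ref{lemma4} gives $\mm^+_C(S_1^C)=N$; hence $C_0=1/N$.

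Next I would eliminate the dependence of the inner integral on the direction of $\nabla f(x)$. For $x$ with $\nabla f(x)\neq 0$, write $\nabla f(x)=|\nabla f(x)|\,w$ with $|w|=1$ (the set where $\nabla f=0$ contributes nothing), so $|\nabla f(x)\cdot v|^p=|\nabla f(x)|^p\,|w\cdot v|^p$. Because the surface measure on $S_1^N$ is invariant under orthogonal transformations of $\R^N$, the quantity
\[
I_{p,N}:=\int_{S_1^N}|w\cdot v|^p\,\d\mathcal H^{N-1}(v)
\]
is independent of the unit vector $w$. Inserting this, the right-hand side becomes $C_0\,I_{p,N}\int_{\R^N}|\nabla f(x)|^p\,\d x=C_0\,I_{p,N}\,\|\nabla f\|_{L^p}^p$.

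It remains only to identify $C_0\,I_{p,N}$ with $K_{p,N}$. Using the elementary relation $\mathcal H^{N-1}(S_1^N)=N\,\mathcal L^N(B_1^N)$ between the surface area of the unit sphere and the volume of the unit ball,
\[
K_{p,N}=\mathcal L^N(B_1^N)\fint_{S_1^N}|w\cdot v|^p\,\d\mathcal H^{N-1}(v)
=\mathcal L^N(B_1^N)\,\frac{I_{p,N}}{\mathcal H^{N-1}(S_1^N)}=\frac{I_{p,N}}{N}=C_0\,I_{p,N},
\]
while the well-definedness (independence from $w$) of $K_{p,N}$ is precisely the rotational invariance used above, and $\mathcal L^N(B_1^N)=\pi^{N/2}/\Gamma(N/2+1)$ is classical. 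There is no genuine obstacle in this corollary; the only point deserving care is the bookkeeping of normalizations --- in particular that in Lemma \ref{lemma4} the Minkowski content is taken with respect to the normalized tangent measure $\mm_C=(\mathcal L^N(B_1^N))^{-1}\mathcal L^N$, whereas in Proposition \ref{prop:banach} it is taken with respect to $\mathcal L^N$ --- together with the fact that $\nabla f$ denotes the classical gradient, as already justified through the Euclidean Rademacher theorem in the proof of Proposition \ref{prop:banach}.
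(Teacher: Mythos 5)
Your proposal is correct and follows essentially the same route as the paper: specialize Proposition \ref{prop:banach} to $\|\cdot\|=|\cdot|$, use rotational invariance of $\mathcal H^{N-1}$ on $S_1^N$ to factor out $|\nabla f|^p$, and identify the constant. Your constant bookkeeping ($C_0=1/N$ via Lemma \ref{lemma4} and $\mathcal H^{N-1}(S_1^N)=N\,\mathcal L^N(B_1^N)$) is in fact more explicit than the paper's, which leaves that identification implicit.
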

\begin{proof}
It can be seen from Rademacher's theorem that the limit of rescaling functions is unique, which is a linear function.
Notice also that $$ \fint_{S_1^{N} } |\nabla f \cdot v |^p \,\d\mathcal H^{N-1}(v)=|\nabla f |^p \fint_{S_1^{N} }  \left|\frac{\nabla f}{|\nabla f |} \cdot v \right|^p \,\d\mathcal H^{N-1}(v).$$

By isotropicity of the Euclidean space, we know that  $K_{p, N}$ is a constant, which is independent of $w\in S_1^N$.  Therefore  $$ \fint_{S_1^{N} }  \left|\frac{\nabla f}{|\nabla f |} \cdot v \right|^p \,\d\mathcal H^{N-1}(v)=\fint_{S_1^N } |w\cdot v |^p \,\d\mathcal H^{N-1}(v)~~~~\forall w\in S_1^N.$$
Then the assertion follows from Proposition \ref{prop:banach}.
\end{proof}

\bigskip

 \begin{proposition}[Weighted Riemannian manifolds, cf. \cite{BBM-RM}]\label{prop:weight}
Let $(M, \d_\g, \mm)$ be a $N$-dimensional weighted Riemannian manifold  with $\mm=\theta \vol$,  $\theta \in C(M)$ and $\theta^{-1}\in L^\infty$, and $(\rho_n)_n$ be a family of mollifiers satisfying Assumption \ref{assumption2}. Then 
\begin{equation}\label{eq1:coro2}
 \mathop{\lim}_{n \to \infty} \int_M \int_M \frac{|f(x)-f(y)|^p}{\d^{p}(x, y)} \rho_n(x, y)\,\d\mm(x) \d\vol(y)= {K_{p, N} }  \| \nabla f\|_{L^p(M, \mm)}^p
\end{equation}
where
\[
K_{p, N}:= \mathcal L^N(B^N_1)\fint_{S_1^N } |w\cdot v |^p \,\d\mathcal H^{N-1}(v)
\]
is the same constant as the constant appeared in \eqref{eq:prop:eu}.
\end{proposition}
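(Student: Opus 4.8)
The plan is to recognize a weighted Riemannian manifold as a space covered by Theorem \ref{th1}, to identify its tangent cone and generalized linear functions, to evaluate the constant, and finally to account for the fact that in \eqref{eq1:coro2} the inner integral is taken against $\vol$ rather than against $\mm$. First I would verify Assumption \ref{assumption1}. The metric space $(M,\d_\g)$ with the Riemannian volume is doubling and supports a local Poincar\'e inequality \cite{AGM}; since $\theta\in C(M)$ and $\theta^{-1}\in L^\infty$, the weight is locally bounded above and bounded below by a positive constant, so $\mm=\theta\vol$ shares these properties, which is Assumption \ref{assumption1}-1). For items 2)--5) I would work in geodesic normal coordinates at a point $x$, where $\g=\Id+O(r^2)$, $\d\vol=(1+O(r^2))\,\d\mathcal L^N$ with $r=\d(x,\cdot)$, and $\mm=\theta(x)(1+o(1))\,\d\mathcal L^N$ by continuity of $\theta$. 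Hence at every $x$ the tangent cone is unique and equal to $\mathfrak C=(\R^N,|\cdot|,\omega_N^{-1}\mathcal L^N)$, where $\omega_N:=\mathcal L^N(B_1^N)$ (this normalization makes the unit ball have mass $1$); one takes $\NN=\emptyset$, $N=\dim M$, uses the dilations $D_r(v)=rv$ for the self-similarity of Assumption \ref{assumption1}-5), and the Gromov--Hausdorff approximations coming from the exponential chart satisfy the estimates of Assumption \ref{assumption1}-3) and 4) with $\eta(\delta)$ controlled by $C\delta^2$ plus the modulus of continuity of $\theta$. Uniqueness of the generalized linear functions (Assumption \ref{assumption1}-2b)) is the classical Rademacher theorem on $\R^N$: for $\mm$-a.e.\ $x$ the unique blow-up of a Lipschitz $f$ is the linear map $f_{0,x}(v)=\nabla f(x)\cdot v$, and two such representations differ by an orthogonal transformation, that is, by an isometry of $\mathfrak C$.

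Next I would evaluate the constant. By Lemma \ref{lemma4}, the density of $\vol$ at $x$ is $\lim_{\delta\to0}\vol(B_\delta(x))/\delta^N=\omega_N$, and the boundary measure $\mm_C^+$ on $S_1^C$ is $\omega_N^{-1}\mathcal H^{N-1}$, of total mass $N$; substituting $f_{0,x}(v)=\nabla f(x)\cdot v$ and using the rotational invariance of $\R^N$ to pull out $|\nabla f(x)|^p$, exactly as in the proof of Proposition \ref{prop:eu}, one finds $\omega_N\,\fint_{S_1^{C}}|f_{0,x}(v)|^p\,\d\mm_C^+(v)=K_{p,N}|\nabla f(x)|^p$, with $K_{p,N}=\mathcal L^N(B_1^N)\fint_{S_1^N}|w\cdot v|^p\,\d\mathcal H^{N-1}(v)$ independent of the unit vector $w$. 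Then, since \eqref{eq1:coro2} is integrated against $\d\mm(x)\,\d\vol(y)$ and not against $\mm\otimes\mm$, I would re-run the pointwise part of the proof of Proposition \ref{prop2} on the \emph{unweighted} space $(M,\d_\g,\vol)$, which has the same tangent cone $\mathfrak C$ and on which $(\rho_n)$ still satisfies Assumption \ref{assumption2} (because $\theta^{-1}\in L^\infty$); the only change is that the density factor produced by the change of variables is the constant $\omega_N$ rather than $\theta(x)\omega_N$. This yields, for $\mm$-a.e.\ $x$, $\lim_{n\to\infty}\int_M\frac{|f(x)-f(y)|^p}{\d^{p}(x,y)}\rho_n(x,y)\,\d\vol(y)=K_{p,N}|\nabla f(x)|^p$, and integrating against the outer measure $\d\mm(x)$ gives $K_{p,N}\int_M|\nabla f|^p\,\d\mm=K_{p,N}\|\nabla f\|_{L^p(M,\mm)}^p$. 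For $f\in\Lip_c(M)$ the exchange of limit and outer integral is justified by the uniform-in-$n$ estimates of the $\vol$-version of Proposition \ref{prop1} together with the decay $\tilde\rho_n(t)\le Ct^{-N}$ forced by Assumption \ref{assumption2}-1'), and one passes from $\Lip_c(M)$ to $W^{1,p}(M,\mm)$ exactly as in the proof of Theorem \ref{th1}, using Proposition \ref{prop:density}, the Hajlasz--Sobolev estimate of Proposition \ref{prop:density2}, and $\theta^{-1}\in L^\infty$.

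The step I expect to be the main obstacle is this last limit-passage on a possibly non-compact manifold: one must control the long-range interactions in the outer integral uniformly in $n$, and this is exactly where the radial monotone decay of $(\rho_n)$ and the uniform $L^1$-bound of Assumption \ref{assumption2} are indispensable; by contrast, the verification of Assumption \ref{assumption1} in geodesic normal coordinates and the evaluation of $K_{p,N}$ are routine given the general theorem.
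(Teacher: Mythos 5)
Your proposal is correct and follows essentially the same route as the paper, whose proof consists of two sentences: take the Gromov--Hausdorff approximations induced by the exponential maps and invoke Theorem \ref{th1} together with the argument of Proposition \ref{prop:eu}; your verification of Assumption \ref{assumption1} in normal coordinates (with $\NN=\emptyset$ and $\eta(\delta)$ controlled by $C\delta^2$ plus the modulus of continuity of $\theta$) is exactly the content hidden in that citation. The one point where you go beyond the paper is your explicit treatment of the mixed measure $\d\mm(x)\,\d\vol(y)$ in \eqref{eq1:coro2}: since Theorem \ref{th1} is stated for $\mm\otimes\mm$, the inner integral against $\vol$ must be handled separately (producing the density $\omega_N$ instead of $\theta(x)\omega_N$) so that the right-hand side comes out as $K_{p,N}\|\nabla f\|^p_{L^p(M,\mm)}$ rather than $K_{p,N}\int\theta|\nabla f|^p\,\d\mm$, and your re-running of the pointwise argument of Proposition \ref{prop2} on $(M,\d_\g,\vol)$ is a legitimate and in fact necessary supplement that the paper's proof leaves implicit.
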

\begin{proof}
We can choose the Gromov-Hausdorff approximations $(\varphi_\delta)$ induced by the exponential maps. Then the assertion can be proved using Theorem \ref{th1} and the proof 
of Proposition \ref{prop:eu}.
\end{proof}

Next we will apply our results to  equi-regular sub-Riemannian manifolds.   It is known that an equi-regular  sub-Riemnnian manifold  has a unique tangent cone, which is isometric to a Carnot group.  On a Carnot group, it is also known that Haar, Lebesgue and the top-dimensional Hausdorff measures are left-invariant and proportional. Without loss of generality, we will  neglect the multiplicative constants.

 \begin{proposition}[Sub-Riemannian manifolds]\label{prop:sr}
 Let $\ms$ be an $m$-dimensional  equi-regular sub-Riemannian manifold with homogeneous (Hausdorff) dimension $N \geq m$, equipped with the Carnot-Carath\'eodory metric $\d$ and the associated Hausdorff measure $\mm=\mathcal H^N_\d$. Let $(\rho_n)_n$ be a family of mollifiers satisfying Assumption \ref{assumption2}. 
We have
\begin{equation}\label{eq1:coro3}
 \mathop{\lim}_{n \to \infty} \int_X \int_X \frac{|f(x)-f(y)|^p}{\d^{p}(x, y)} \rho_n(x, y)\,\d\mm(x) \d \mm(y)=
\| \nabla f\|^p_{K_{p, \mathfrak C} }
\end{equation}
where
\[
\| \nabla f\|^p_{K_{p, \mathfrak C} }=\mathcal L^m(B^{\mathfrak C}_1) \int_{X} \fint_{S_1^{\mathfrak C} } |\nabla f \cdot v |^p \,\d\mathcal H^{N-1}_{\d_{CC}}(v)\,\d \mm
\]
where $B^{\mathfrak C}_1$ is the unit ball in the tangent cone $\mathfrak C=(\R^m, \d_{CC}, \mathcal L^m=\mathcal H^{N}_{\d_{CC}})$ centred at $0$,   $S_1^{\mathfrak C} $ denotes  its boundary and $\mathcal H^{N-1}_{\d_{CC}}$ is the boundary measure.
\end{proposition}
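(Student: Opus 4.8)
The plan is to verify that an equi-regular sub-Riemannian manifold $\ms$ with $\mm=\mathcal H^N_\d$ ($N$ the homogeneous Hausdorff dimension) falls into the framework of Assumption \ref{assumption1}, to identify the blow-up of a Lipschitz function as a horizontal linear functional, and then to apply Theorem \ref{th1} and recast the resulting functional as \eqref{eq1:coro3}.

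First I would collect the sub-Riemannian input. By \cite{AGM}, $(X,\d,\mm)$ is locally doubling and supports a local Poincar\'e inequality, so Assumption \ref{assumption1}-1) holds and (by Cheeger's theorem) $X$ carries a differentiable structure. By the nilpotent approximation theorem (Mitchell, Bellaiche, Margulis-Mostow) the rescaled pointed spaces $(X,\delta^{-1}\d,x)$ converge, as $\delta\downarrow 0$, to the nilpotentization $\mathfrak C$, which by \cite{LeDonne2011} is a Carnot group carrying a left-invariant Carnot-Carath\'eodory (sub-Finsler) metric $\d_{CC}$; identifying $\mathfrak C$ with $\R^m$ through privileged (exponential) coordinates, $\mathcal H^N_{\d_{CC}}$ is proportional to $\mathcal L^m$ and, as agreed, we absorb the constant and write $\mathcal H^N_{\d_{CC}}=\mathcal L^m$. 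The Carnot dilations $D_r$, which expand $\d_{CC}$ by the factor $r$ and push $\mathcal L^m$ forward by $r^{-N}$ while fixing the identity, realize Assumption \ref{assumption1}-5); together with the uniqueness of the tangent this also yields 2a). Reading $\phi_\delta$ as the privileged chart centred at $x$, the quantitative form of Mitchell's theorem (Bellaiche's ball-box estimates, cf.\ Jean's monograph) gives $\d_{CC}(x_0,\phi_\delta(y))=\delta^{-1}\d(x,y)\bigl(1+O(\delta)\bigr)$ uniformly for $y\in B_\delta(x)$, so in the equi-regular case one may take $\NN=\emptyset$ and \eqref{eq0:asmp}, \eqref{eq1:asmp} hold; Mitchell's measure theorem, together with the continuity of the Hausdorff density of an equi-regular structure, gives $(\phi_\delta)_\sharp\mm^x_\delta\to\mm_C$ in the form \eqref{eq2:asmp}, i.e.\ 4), and shows that the density of Lemma \ref{lemma4} satisfies $\theta(x)\,\mm_C=\mathcal H^N_{\d_{CC}}$, equivalently $\theta(x)=\mathcal L^m(B^{\mathfrak C}_1)$; in particular $\theta$ is continuous and strictly positive, hence $\theta^{-1}\in L^\infty$ on any relatively compact domain, which is what Theorem \ref{th1} requires.

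Next I would identify $u_{0,x}$ for $u$ Lipschitz. By Theorem \ref{th:rm} the blow-up $u_{0,x}$ is a generalized linear function on the Carnot group $\mathfrak C$, and on a Carnot group these are exactly the linear functionals $v\mapsto\nabla u(x)\cdot v$ on the first layer (the horizontal Pansu differential of $u$ at $x$), annihilating the higher layers; any two such limits differ by an isometry of $\mathfrak C$, so Assumption \ref{assumption1}-2b) holds, and $\Lip(u_{0,x})=\lip{u}(x)$. Thus Assumption \ref{assumption1} is satisfied with the same $N$ that enters Assumption \ref{assumption2}-3$'$), so Theorem \ref{th1} applies to any $u\in W^{1,p}(X,\d,\theta\mm)$. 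To conclude, I would rewrite the formula of Theorem \ref{th1}: since $\mm_C=(\mathcal L^m(B^{\mathfrak C}_1))^{-1}\mathcal L^m$, the Minkowski content it induces on $S_1^{\mathfrak C}$ is a constant multiple of $\mathcal H^{N-1}_{\d_{CC}}$, the constant cancels inside the average $\fint_{S_1^{\mathfrak C}}$, and multiplying by $\theta(x)=\mathcal L^m(B^{\mathfrak C}_1)$ turns $\int_X\theta(x)\fint_{S_1^{\mathfrak C}}|u_{0,x}(v)|^p\,\d\mm^+_C(v)\,\d\mm(x)$ into $\mathcal L^m(B^{\mathfrak C}_1)\int_X\fint_{S_1^{\mathfrak C}}|\nabla u\cdot v|^p\,\d\mathcal H^{N-1}_{\d_{CC}}(v)\,\d\mm$, which is exactly $\|\nabla u\|^p_{K_{p,\mathfrak C}}$.

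The hard part is not this soft reduction but the quantitative content of Assumptions \ref{assumption1}-3) and 4): that privileged coordinates furnish Gromov-Hausdorff approximations with an \emph{explicit} $\eta(\delta)$-control on both distances and pushed-forward measures, and that the spherical Hausdorff density $\theta$ is continuous and bounded away from $0$ so that $\theta^{-1}\in L^\infty$; these rest on the (classical but delicate) nilpotent-approximation and Hausdorff-measure convergence theorems for equi-regular sub-Riemannian manifolds rather than on anything proved here. A secondary subtlety is that the nilpotentization $\mathfrak C=\mathfrak C_x$ may vary with $x$, so the fixed-tangent clause in Assumption \ref{assumption1}-2a) should be read either under the standing hypothesis that the nilpotentization is constant (as for Carnot groups, Heisenberg-type manifolds, and more) or by interpreting every $\mathfrak C$-dependent quantity in \eqref{eq1:coro3} as the pointwise $\mathfrak C_x$; the computation above is unchanged in either reading.
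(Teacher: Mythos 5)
Your proposal is correct and follows essentially the same route as the paper: verify Assumption \ref{assumption1} item by item using Pansu's Rademacher theorem for the uniqueness and linearity of the blow-up, Mitchell's theorem (with Ghezzi--Jean for the measure) for the Carnot-group tangent, Bellaiche's nilpotent-approximation estimates in privileged/exponential coordinates for the good Gromov--Hausdorff approximations, and the Carnot dilations for homogeneity, then invoke Theorem \ref{th1}. Your additional remarks (taking $\NN=\emptyset$, the explicit computation of $\theta$, and the caveat about the nilpotentization $\mathfrak C_x$ varying with $x$) are sensible elaborations of points the paper leaves implicit rather than a different argument.
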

\begin{proof}
We just need to check the items in Assumption \ref{assumption1} one by one.

1) By Pansu's theorem \cite[Th\'eor\`em 2]{Pansu89} concerning Rademacher's theorem  on sub-Riemannian manifolds, we know Lipschitz functions are almost everywhere differentiable and the limit of rescaling functions is unique. 

2) It was proved by Mitchell in \cite[Theorem 1]{Michell85} (see also \cite{Pansu89}) that the tangent cone of a sub-Riemannian manifold equipped  with a equi-regular (or called generic) distribution  $\{X_i\}_{i=1,...,k}$, $k\leq n$, is isometric to
a nilpotent Lie group (Carnot group) with a left-invariant Carnot-Carath\'eodory metric $\d_{CC}$.  By  \cite{Ghezzi15} we also  know the tangent cone, as a metric measure space,  is also  a Carnot group $(C, \d_{CC}, \mathcal L^n)$ equipped with its Haar (Hausdorff, Lebesgue) measure.

3) and 4) Similar to Riemannian cases, there are almost isometries  $(\phi_\delta)$ induced by exponential maps on sub-Riemannian manifolds. 
More precisely,  there exist positive constants $c$  and $r$,  such that  (see \cite[Theorem 6.4]{Bellaiche97})
\begin{equation}\label{eq1:prop3}
-c \d(x, y)\Big(\delta \d_{CC}\big(x_0, \phi_\delta(y)\big) \Big)^{\frac 1r} <\d(x, y)-\delta \d_{CC}\big(x_0, \phi_\delta(y)\big) <c\d(x, y) \Big(\delta \d_{CC}\big(x_0, \phi_\delta(y)\big) \Big)^{\frac 1r}
\end{equation}
for some $c>0$.
Denote $\delta \d_{ CC}\big(x_0, \phi_\delta(y)\big)$ by $|w|$. By \eqref{eq1:prop2}  and an iteration  argument we have 
\begin{eqnarray*}
\d(x, y) &<&  |w|+c\d(x, y)|w|^{\frac 1r} <|w|+c\big(|w|+c\d(x, y)|w|^{\frac 1r}\big) |w|^{\frac 1r}<...\\
&<& |w|(1+O(|w|)).
\end{eqnarray*}
Similarly, we can prove
\[
\d(x, y)> |w|(1+O(|w|)).
\]

Thus
\[
\frac{\d(x, y)}{\delta \d_{ CC}\big(x_0, \phi_\delta(y)\big)}=1+O(\delta)
\]
which is the thesis.

5) On Carnot group, there is a canonical dilation map $\D_r: C \to C$ such that $$\d_{CC}\big(D_r(x), D_r(y) \big)=r\d_{C}(x, y),$$ and $(D_r)_\sharp \mathcal L^n=r^{-N} \mathcal L^n$ where $N$ is the Hausdorff dimension w.r.t. $\d_{CC}$. It is known that $N$ can also be computed by  $$N=\sum_i i({\rm dim}(V_i)-{\rm dim}(V_{i-1}))$$ where $V_i(x)$ is the subspace of $T_xM$ spanned by all commutators of the distributions $X_j$ of order $\leq$ i.
\end{proof}

\bigskip

At last, we will  prove Bourgain-Brezis-Mironescu's formula for non-collapsed RCD$(K, N)$ metric measure spaces introduced  by De Philippis and Gigli in  \cite{DPG-N}. The synthetic theory of metric measure spaces with lower Ricci curvature bounds (and upper dimension bounds), initiated by Lott-Villani \cite{Lott-Villani09} and Sturm \cite{S-O1, S-O2},  has great developments in recent years.  We refer the reader to the ICM proceeding \cite{AmbrosioICM} by Ambrosio for an overview of this topic  and  bibliography. 

 \begin{proposition}[Non-collapsed RCD$(K, N)$ spaces]\label{prop:nc}
Let $\ms=(X, \d, \mathcal H^N)$ be a  non-collapsed $\rcdkn$ metric measure space.     Then 
\begin{equation}\label{eq1:coro4}
 \mathop{\lim}_{n \to \infty} \int_X \int_X \frac{|f(x)-f(y)|^p}{\d^{p}(x, y)} \rho_n(x, y)\,\d\mm(x) \d\mm (y)= {K_{p, N} }  \| \nabla f\|_{L^p\ms }^p
\end{equation}

\end{proposition}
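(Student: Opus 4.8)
The plan is to deduce Proposition \ref{prop:nc} from Theorem \ref{th1} by verifying that a non-collapsed $\rcdkn$ space $\ms=(X,\d,\mathcal H^N)$ satisfies Assumption \ref{assumption1}, with tangent cone $\mathfrak C$ equal to the normalized Euclidean space $(\R^N,|\cdot|,\omega_N^{-1}\mathcal L^N,0)$, where $\omega_N:=\mathcal L^N(B_1^N)$ and $N$ is a positive integer (this last fact being part of the non-collapsed structure theory \cite{DPG-N}); the constant in \eqref{eq1:coro4} is then obtained exactly as in Proposition \ref{prop:eu}.

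For Assumption 1): $\mathcal H^N$ is locally doubling by Bishop--Gromov, and $\ms$ supports a local Poincar\'e inequality by Rajala \cite{Rajala12}. For Assumption 2a): combining the $\mm$-a.e.\ existence of Euclidean blow-ups \cite{GMR-E}, the $\mm$-a.e.\ uniqueness of tangents and the rectifiability of $\rcdkn$ spaces \cite{MN-S}, the constancy of the essential dimension \cite{BrueSemolaConstant}, and De Philippis--Gigli's theorem that in the non-collapsed case $\mathcal H^N$-a.e.\ point is $N$-regular \cite{DPG-N}, one gets that at $\mathcal H^N$-a.e.\ $x$ the tangent cone is unique and isometric, as a pointed metric measure space, to $\mathfrak C$. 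For Assumption 2b): on $\R^N$ the generalized linear functions are exactly the affine ones, and since two blow-up limits $u_{0,x},\bar u_{0,x}$ of a Lipschitz $u$ are linear with the same Lipschitz constant $\lip u(x)$ (Theorem \ref{th:rm} and the ensuing remark), they differ by an orthogonal transformation, which is an isometry of $\mathfrak C$ fixing the origin. For Assumption 5): $\mathfrak C$ is abelian and the Euclidean dilations $D_r(v)=rv$ give the self-similarity with exponent $N$ (the Hausdorff dimension of $\mathfrak C$); moreover, by Assumption 2a) and Lemma \ref{lemma4} the density satisfies $\theta\equiv\omega_N$ $\mathcal H^N$-a.e., so $\theta^{-1}\in L^\infty$ and $W^{1,p}(X,\d,\theta\mathcal H^N)=W^{1,p}\ms$.

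The substantive point is Assumptions 3) and 4), the existence of good Gromov--Hausdorff approximations $(\phi_\delta)$ from $(X,\delta^{-1}\d,x)$ to $\mathfrak C$ with an exceptional set $\NN$ satisfying $\mathcal H^N(\NN\cap B_\delta(x))=o(\delta^N)$ and the comparisons \eqref{eq1:asmp}--\eqref{eq2:asmp}. The key input here, in place of the exponential charts used in the Riemannian and sub-Riemannian cases, is that for non-collapsed spaces the blow-ups converge not merely in the pmGH sense but with convergence of volumes of balls (equivalently, the limiting density is the constant $\omega_N$), by De Philippis--Gigli \cite{DPG-N}. I would take $\NN$ to consist of the points of $B_\delta(x)$ lying in the metrically $o(\delta)$-neighbourhood of $x$ together with the bad set of the $N$-regularity of $x$; on the complement, \eqref{eq1:asmp} holds because $\phi_\delta$ is an $\eps(\delta)$-isometry with $\eps(\delta)\to0$, while \eqref{eq2:asmp} follows from the volume convergence and the absolute continuity of $\mathcal H^N$, after normalizing $\phi_\delta(x)=0$. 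I expect this verification to be the main obstacle, since it genuinely uses the fine structure of non-collapsed RCD spaces rather than a soft argument.

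Granting this, Theorem \ref{th1} applies to every $f\in W^{1,p}\ms$. At $\mathcal H^N$-a.e.\ $x$ the differential $\nabla f(x)$ exists and the blow-up $f_{0,x}$ is the linear function $v\mapsto\nabla f(x)\cdot v$, so, using $\theta\equiv\omega_N$ and $\mathfrak m_C^+(S_1^C)=N$ from Lemma \ref{lemma4},
\[
\|\nabla f\|_{K_{p,\mathfrak C}}^p=\int_X \omega_N\,\fint_{S_1^N}|\nabla f(x)\cdot v|^p\,\d\mathcal H^{N-1}(v)\,\d\mathcal H^N(x)=K_{p,N}\,\|\nabla f\|_{L^p\ms}^p,
\]
where the last equality is the isotropy computation from the proof of Proposition \ref{prop:eu}. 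This proves \eqref{eq1:coro4}.
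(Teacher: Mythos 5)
Your overall strategy coincides with the paper's: reduce to Theorem \ref{th1} by checking Assumption \ref{assumption1} with $\mathfrak C$ the normalized Euclidean space, then compute the constant as in Proposition \ref{prop:eu}. Your treatment of Assumptions 1), 2a), 2b), 5), of the identification $\theta\equiv\omega_N$ and of the final isotropy computation agrees with what the paper does or takes for granted. The gap is exactly in the step you yourself flag as the main obstacle, namely Assumptions 3)--4), and the route you sketch there does not close it. First, the exceptional set $\NN$ in Assumption 3) is a single measurable set attached to $x$, valid for all $\delta$ simultaneously; your candidate (``points of $B_\delta(x)$ in the metrically $o(\delta)$-neighbourhood of $x$'') depends on $\delta$, and a point at distance $r_0$ from $x$ would have to lie in $\NN$ for $\delta\gg r_0$ but outside it at scale $\delta\approx r_0$, so no single set of this form can work. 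Second, and more seriously, an $\eps(\delta)$-isometry coming from pmGH convergence controls rescaled distances only up to an \emph{additive} error $\eps(\delta)$, which is far weaker than the \emph{multiplicative} bound \eqref{eq1:asmp}, and the comparison \eqref{eq2:asmp} is a pointwise density comparison between $(\phi_\delta)_\sharp\mm^x_\delta$ and $\mm_C$ on the image set, which does not follow from weak convergence of measures or from volume convergence of balls.

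The paper resolves both difficulties at once by not using the pmGH approximations at all: it invokes the Mondino--Naber rectifiability theorem, which provides, for each $k$, charts $\varphi^i_{1/k}\colon U^i_{1/k}\to\R^N$ that are $(1+1/k)$-bi-Lipschitz. A bi-Lipschitz chart normalized by $\varphi(x)=0$ gives \eqref{eq1:asmp} with $\eta\sim 1/k$ uniformly down to scale zero (there is no additive error to excise), and it distorts $\mathcal H^N$ by a factor $(1+1/k)^{\pm N}$, which is what yields \eqref{eq2:asmp}. The set $\NN$ is then built once and for all as $\cup_k\bigl(B_{r_k}(x)\setminus U^{i_k}_{1/k}\bigr)$ for a rapidly decreasing sequence $(r_k)$ chosen via the density-one property of the charts at $x$, and $\mm\bigl(\NN\cap B_\delta(x)\bigr)=o(\delta^N)$ follows from a geometric-series estimate. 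To complete your argument you should replace ``pmGH approximation plus volume convergence'' by these bi-Lipschitz charts; the remainder of your proof then goes through as written.
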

\begin{proof}
 By  a structural theorem of Mondino-Naber \cite{MN-S} we know  $(X,\d,\mm)$ is rectifiable.   For any $\delta>0$,  there is a family $\{U_\delta^i, \varphi_\delta^i)\}_{i\in \N}$ of Borel charts  $U_\delta^i\subset X$ such that $X=\cup_i U_\delta^i$ up to a $\mm$-negligible set, and $\varphi_\delta^i: X \to \R^{N}$ is $(1+\delta)$-bi-Lipschitz, i.e. for any $x, y \in X$,
\[
(1+\delta)^{-1} \d(x, y) \leq \left |\varphi_\delta^i(x)-\varphi_\delta^i(y) \right | \leq (1+\delta) \d(x, y).
\] 
Define $R_\delta \subset \cup_{i\in \N} U_\delta^i$ as the union of   density-1 points $x\in U_\delta^i$ for some $i\in \N$, i.e. $x\in R_\delta $ if and only if there is $i\in \N$ such that $x\in U_\delta^i$ and
\begin{equation}\label{eq2:coro4}
\lmt{r}{0}\frac{\mm(B_r(x)\cap U_\delta^i)}{\mm(B_r(x))}=1.
\end{equation}
It can be seen that $R_\delta$ has full measure in $X$. 

Let  $R$ be the set of regular points (i.e. the union of points where the tangent space is unique and isometric to $\R^N$).  Denote $R_\infty:=\cap_{k\in \N} R_{\frac 1 k} \cap R$.  By \cite{MN-S} and the discussions above we know $R_\infty$ has full measure in $X$.

 Fix $x\in R_\infty$. By definition, for any $k$ there is $i_k\in \N$ such that   $\varphi^{i_k}_{\frac 1k}$ is $(1+\frac 1k)$-bi-Lipschitz on $U_{\frac 1k}^{i_k} \ni x$. Without loss of generality, we can assume that $\varphi^{i_k}_{\frac 1k}(x)=0$.

By \eqref{eq2:coro4},  there is  $r_k \in (0, 1)$ such that for any $r\leq r_k$ it holds
\[
\frac{\mm(B_r(x)\cap U_{\frac 1k}^{i_k})}{\mm(B_r(x))}>1-\frac 1 {2^k}, 
\]
and we denote $\mathcal N_k:= B_{r_k}(x)\setminus U_{\frac 1k}^{i_k}$.
By induction,  we can find $U_{\frac1{k+1}}^{i_{k+1}} \ni x$ and $r_{k+1}<\frac{r_k}{2^k}$ such that
\[
\frac{\mm(B_{r_{k+1}}(x)\cap U_{\frac 1{k+1}}^{i_{k+1}})}{\mm(B_{r_{k+1}}(x))}>1-\frac 1 {2^{k+1}}.
\]

Denote $\mathcal N:=\cup_k  \mathcal N_k=\cup_k \Big(B_{r_k}(x)\setminus U_{\frac 1k}^{i_k}\Big)$. For any $\delta>0$, there is $k_0=k_0(\delta)$ such that $r_{k_0+1} <\delta \leq r_{k_0}$, then
\begin{eqnarray*}
\mm\big (\mathcal N\cap B_\delta(x)\big)
&\leq &\sum_{k\geq k_0} \mm\big (B_{r_k}(x)\setminus U_{\frac 1k}^{i_k}\big)
<  \sum_{k\geq k_0} \frac 1 {2^{k}} \mm\big (B_{r_k}(x)\big)\\
&\lesssim& \sum_{k\geq k_0} \frac 1 {2^{k}} r_k^N
\lesssim \sum_{k\geq k_0} \frac 1 {2^{k}} \delta^N.
\end{eqnarray*}
Note $k_0 \to +\infty$ as $\delta\to 0$, we have
\[
\lmt{\delta}{0} \frac{ \mm\big (\mathcal N\cap B_\delta(x)\big)}{\delta^N}\lesssim \lmt{k_0}{+\infty} \sum_{k\geq k_0} \frac 1 {2^{k}}=0
\]
which fulfils \eqref{eq0:asmp}.

In addition, for any  $\delta>0$ with  $r_{k+1} <\delta \leq r_{k}$,  we define 
$\varphi_\delta:=\varphi^{i_{k}}_{\frac 1{k}}$, which is a desirable  family of good Gromov-Hausdorff approximation satisfying Assumption \ref{assumption1}.
\end{proof}
\begin{remark}
In this proposition,  a  non-collapsed $\rcdkn$ metric measure space is  defined as $(X, \d, \mathcal H^N)$. This  is slightly different from the  original definition of non-collapsed $\rcdkn$ condition in \cite{DPG-N}. However,   by a recent result of Brena-Gigli-Honda-Zhu  \cite{BGHZ2021} (see also \cite{HondaGT}) this definition is equivalent to De Philippis-Gigli's.  

For a general $\rcdkn$ space $\ms$, if the density of $\mm$ w.r.t. the Hausdorff measure has a strictly positive lower bound, we can  prove a non-smooth version  of Proposition \ref{prop:weight} by a similar proof as Proposition \ref{prop:nc}.
\end{remark}


\def\cprime{$'$}

\end{document}